	\titleformat{\section}[block]{\Large\bfseries\filcenter}{\thesection}{1em}{}
\theoremstyle{plain}
\newtheorem{bigthm}{Theorem}
\newtheorem{bigcor}[bigthm]{Corollary}
\renewcommand*\thesection{\arabic{section}}
\numberwithin{equation}{section} 
\theoremstyle{plain}
\newtheorem{thm}{Theorem}
\newtheorem*{thm*}{Theorem}
\newtheorem{lemma}[thm]{Lemma}
\newtheorem{prop}[thm]{Proposition}
\newtheorem{cor}[thm]{Corollary}
\numberwithin{thm}{section} 
\theoremstyle{definition}
\newtheorem{ndef}[thm]{Definition}
\newtheorem{ex}[thm]{Example}
\newtheorem{question}[thm]{Question}
\newtheorem{remark}[thm]{Remark}
\newtheorem{conj}[thm]{Conjecture}
\newcommand{\thistheoremname}{}
\newtheorem{genericthm}[equation]{\thistheoremname}
\newcommand{\thistheoremnames}{}
\newtheorem*{genericthms}{\thistheoremnames}
\newenvironment{para*}[1]
  {\renewcommand{\thistheoremnames}{#1}%
   \begin{genericthms}}
  {\end{genericthms}}
\let\expandafter\oldproof\csname\string\proof\endcsname
\let\oldendproof\endproof
\renewenvironment{proof}[1][\proofname]{%
  \oldproof[\upshape \bfseries #1]%
}{\oldendproof}
\def\@makechapterhead#1{%
  \vspace*{50\p@}%
  {\parindent \z@ \raggedright \normalfont
    \interlinepenalty\@M
    \Huge\bfseries  \thechapter.\quad #1\par\nobreak
    \vskip 40\p@
  }}
\def \a{\alpha}
\def \R {\mathbb{R}}
\def \C{\mathbb{C}}
\def \N{\mathbb{N}}
\def \D{\textup{D}}
\def \J{\textup{J}}
\def \T{\mathbb{T}}
\def \e{\varepsilon}
\def \d{\,\textup{d}}
\def \exc{\backslash}
\def \p{\partial}
\def \mc{\mathcal}
\def \mb{\mathbb}
\def \hs{\hspace{0.5cm}}
\def \tp{\textup}
\begin{document}

\title{\textbf{Nonlinear open mapping principles,\\ with applications to the Jacobian equation\\ and other scale-invariant PDEs}}

\author[1]{{\Large Andr\'e Guerra}}
\author[1]{{\Large Lukas Koch}}
\author[2]{{\Large Sauli Lindberg}}

\affil[1]{\small University of Oxford, Andrew Wiles Building Woodstock Rd, Oxford OX2 6GG, United Kingdom 
\protect \\
  {\tt{\{andre.guerra, lukas.koch\}@maths.ox.ac.uk}}
  \vspace{1em} \ }

\affil[2]{\small 
Aalto University, Department of Mathematics and Systems Analysis, P.O. Box 11100, FI-00076 Aalto, Finland 
\protect\\
  {\tt{sauli.lindberg@aalto.fi}} \ }

\date{}

\maketitle

\begin{abstract}
For a nonlinear operator $T$ satisfying certain structural assumptions, our main theorem states that the following claims are equivalent: i) $T$ is surjective, ii) $T$ is open at zero, and iii) $T$  has a bounded right inverse. The theorem applies to numerous scale-invariant PDEs in regularity regimes where the equations are stable under weak$^*$ convergence. Two particular examples we explore are the Jacobian equation and the equations of incompressible fluid flow.

For the Jacobian, it is a long standing open problem to decide whether it is onto between the critical Sobolev space and the Hardy space. Towards a negative answer, we show that, if the Jacobian is onto, then it suffices to rule out the existence of surprisingly well-behaved solutions. 

For the incompressible Euler equations, we show that, for any $p<\infty$, the set of initial data for which there are dissipative weak solutions in $L^p_t L^2_x$ is meagre in the space of solenoidal $L^2$ fields. Similar results hold for other equations of incompressible fluid dynamics.
\end{abstract}

\unmarkedfntext{
\hspace{-0.8cm}
\emph{Acknowledgments.} A.G. and L.K. were supported by the EPSRC [EP/L015811/1]. S.L. was supported by the AtMath Collaboration at the University of Helsinki and the ERC grant 834728-QUAMAP.
}

\begin{spacing}{0.89}
\setcounter{tocdepth}{1}
\tableofcontents
\end{spacing}

\section{Introduction}

The open mapping theorem is one of the cornerstones of functional analysis. When $X$ and $Y$ are Banach spaces and $L \colon X \to Y$ is a bounded \textit{linear} operator, it asserts the equivalence of the following two conditions:
\begin{enumerate}
\item Qualitative solvability: for all $f\in Y$ there is $u\in X$ with $Lu=f$, that is, $L(X) = Y$;
\item Quantitative solvability: for all $f \in Y$ there is $u \in X$ with $L u = f$ and $\Vert u \Vert_X \le C \Vert f \Vert_Y$.
\end{enumerate}
From a PDE perspective, the open mapping theorem justifies the method of a priori estimates \cite[§1.7]{Tao2010}. It is also a powerful tool in studying non-solvability of PDEs. 

For applications to nonlinear PDE, one would like to have an analogue of the open mapping theorem in the case where $L$ is replaced by a \textit{nonlinear} operator $T\colon X\to Y$. This is the main theme of the present paper.
The search for a nonlinear open mapping principle has been spurred by the following problem of \textsc{Rudin} \cite[page 67]{Rudin1969}:
\begin{question} \label{qu:Rudin}
If $X_1$, $X_2$ and $Y$ are Banach spaces and $T$ is a continuous bilinear map of $X_1 \times X_2$ onto $Y$, does it follow that $T$ is open at the origin?
\end{question}

Following \cite{Horowitz1975}, we say that \emph{the open mapping principle holds for $T$} if $T$ is open at the origin.
It is easy to see that, in general, $T$ is not open at all points. The origin plays a special role since, if $T$ is open at $0$, then by scaling one obtains quantitative solvability: for all $f \in Y$ there exist $u_i \in X_i$ such that $$T(u_1,u_2) = f \qquad \tp{ and } \qquad \Vert u_1 \Vert_{X_1}^2 + \Vert u_2 \Vert_{X_2}^2 \le C \Vert f \Vert_Y.$$

The answer to Question \ref{qu:Rudin} is negative. A first counter-example was given by \textsc{Cohen} in \cite{Cohen1974} and, shortly thereafter,  a much simpler one was found by \textsc{Horowitz}  \cite{Horowitz1975}, who considered the operator
$T \colon \R^3 \times \R^3 \to \R^4$ defined by 
 $$T(x,y) \equiv (x_1 y_1, x_1 y_2, x_1 y_3 + x_3 y_1 + x_2 y_2, x_3 y_2 + x_2 y_1),$$
 see also \cite{Bercovici1987,Dixon1988}. These examples suggest that there is little hope of having a general nonlinear open mapping principle. Nonetheless, as the main result of this paper, we find natural sets of conditions under which the open mapping principle holds.
 
\begin{thm*}[Rough version]
Consider a constant-coefficient system of PDEs with scaling symmetries, posed either over $\R^n$ or $\R^n\times [0,+\infty)$, which moreover is preserved by weak$^*$-convergence. The solution-to-data operator $T$ has a nonlinear open mapping principle.
\end{thm*}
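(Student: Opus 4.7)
The plan is to prove the equivalence by establishing the main implication, that surjectivity of $T$ implies openness at zero; the remaining equivalences (openness at zero yields a bounded right inverse via the scale-invariance, and a right inverse trivially gives surjectivity) are then essentially formal.

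The first step is a Baire category argument in $Y$. Since $T$ is onto, one may write $Y = \bigcup_{n \in \N} T(\overline{B_n^X(0)})$. Under the hypothesis that $X$ is a dual space and the PDE is preserved under weak$^*$ convergence, each set $T(\overline{B_n^X(0)})$ is sequentially weak$^*$-closed (by extracting a weak$^*$-limit of bounded preimages via Banach--Alaoglu and invoking the stability assumption), and hence norm-closed in $Y$. Baire's theorem then supplies $n \in \N$, $y_0 \in Y$ and $r > 0$ with
\[
B_r^Y(y_0) \subset T(\overline{B_n^X(0)}).
\]

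The second step is to transport this neighborhood to the origin using the scaling symmetry. Let $\sigma_\lambda$ and $\tau_\lambda$ denote the induced isometric scale actions on $X$ and $Y$, satisfying the intertwining $T \circ \sigma_\lambda = \tau_\lambda \circ T$. Applying $\tau_\lambda$ to the above inclusion yields $B_r^Y(\tau_\lambda y_0) \subset T(\overline{B_n^X(0)})$ for every admissible $\lambda$. By scale-invariance $\|\tau_\lambda y_0\|_Y = \|y_0\|_Y$ is constant in $\lambda$, but by a suitable choice of $\lambda$—possibly composed with a spatial translation, which is also a symmetry of any constant-coefficient system—one can arrange $\tau_\lambda y_0 \rightharpoonup^* 0$ in $Y$ via concentration or spreading. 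Now fix $z \in B_r^Y(0)$: for each $\lambda$ the point $z + \tau_\lambda y_0$ lies in $B_r^Y(\tau_\lambda y_0) \subset T(\overline{B_n^X(0)})$, hence there exists $u_\lambda \in \overline{B_n^X(0)}$ with $T(u_\lambda) = z + \tau_\lambda y_0$. Banach--Alaoglu yields a weak$^*$-convergent subsequence $u_\lambda \rightharpoonup^* u$ with $\|u\|_X \leq n$, while $T(u_\lambda) = z + \tau_\lambda y_0 \rightharpoonup^* z$ in $Y$. The weak$^*$-stability hypothesis then forces $T(u) = z$, establishing $B_r^Y(0) \subset T(\overline{B_n^X(0)})$ and hence openness at zero.

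The main obstacle is ensuring that the symmetry group of the PDE produces, from a nontrivial $y_0$, a sequence $\tau_\lambda y_0$ that converges weak$^*$ to $0$ while remaining in a fixed norm ball of $Y$. For constant-coefficient PDEs over $\R^n$ or $\R^n \times [0,\infty)$ this is achieved by combining scaling with spatial translation, but the concrete mechanism depends on the precise pairing of $X$ and $Y$ and on which weak$^*$ topology is relevant. A secondary technical point is the passage from openness at zero to the quantitative bounded right inverse: this relies on the scale action on $Y$ being transitive enough that every element of $Y$ can be mapped into the neighborhood $B_r^Y(0)$, with simultaneous control on the preimage norm in $X$ via the intertwining with $\sigma_\lambda$.
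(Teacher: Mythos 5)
Your argument is correct and follows essentially the same route as the paper's proof of Theorem~\ref{thm:abstractopenmappingprinciple}: a Baire category argument in the target, closedness of the sets $T(n\mb B_{X^*})$ via weak$^*$ sequential compactness of balls together with weak$^*$-to-weak$^*$ continuity of $T$, and an isometry argument that shifts the ball of solvability to the origin, after which homogeneity yields the global estimate. Two small technical clarifications worth noting: Banach--Alaoglu gives weak$^*$ compactness but not \emph{sequential} compactness, so one must additionally hypothesize (as the paper does) that $\mb B_{X^*}$ is sequentially weak$^*$ compact — true e.g.\ when $X$ is separable or reflexive; and the mechanism sending $\tau_\lambda y_0 \overset{*}{\rightharpoonup} 0$ while preserving the norm is really translation, which the paper isolates as the standalone assumption~\ref{it:isometriesintro}, whereas the scaling/homogeneity assumption~\ref{it:homogeneityintro} is used only for the local-to-global upgrade and not for moving the ball center.
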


For a precise version of the theorem we refer the reader to Theorem \ref{thm:abstractopenmappingprinciple2}. Besides being genuinely nonlinear, this result has the important advantage of not requiring the operator $T$ to be defined over a vector space: as we will see below, this flexibility is very useful for applications to evolutionary PDEs. 
 
The rest of this introduction is structured as follows. In the next subsection we present a precise version of the above theorem in a simple but important special case and then, in Section \ref{sec:introJacobian}, we apply this version to the Jacobian equation. In Section  \ref{sec:generalisedopenmap} we discuss more general open mapping principles, suited for scale-invariant equations, and in Section \ref{sec:applicationstofluids} we expand on the applications of these generalised versions to the equations of incompressible fluid dynamics.

\subsection{A first nonlinear open mapping principle}
\label{sec:thmA}

In this subsection we focus on a simple-to-state nonlinear open mapping principle.

\begin{bigthm} \label{thm:abstractopenmappingprinciple}
Let $X$ and  $Y$ be Banach spaces such that $\mb B_{X^*}$ is sequentially weak$^*$ compact. We make the following assumptions:

\begin{enumerate}[label={\normalfont (A\arabic*)}]
\item\label{it:weakcontinuityintro} $T\colon X^*\to Y^*$ is a weak$^*$-to-weak$^*$ sequentially continuous operator.

\item \label{it:homogeneityintro} $T(a u) = a^s T(u)$ for all $a > 0$ and $u \in X^*$, where $s > 0$.

\item \label{it:isometriesintro} For $k\in \N$ there are isometric isomorphisms $\sigma_k^{X^*}\colon X^*\to X^*$, $\sigma_k^{Y^*}\colon Y^*\to Y^*$ such that 
\begin{align*}
 T \circ \sigma_k^{X^*} = \sigma_k^{Y^*} \circ T \qquad \text{for all } k \in \N, \qquad
\sigma_k^{Y^*} f \overset{*}{\rightharpoonup} 0 \qquad \text{for all } f \in Y^*.
\end{align*}
\end{enumerate}

\noindent Then the following conditions are equivalent:
\begin{enumerate}

\item $T$ is onto: $T(X^*) = Y^*$;

\item $T$ is open at the origin;

\item For every $f \in Y^*$ there exists $u \in X^*$ such that
$T u = f$ and  $\Vert u \Vert_{X^*}^s \le C \Vert f \Vert_{Y^*}.$
\end{enumerate}
\end{bigthm}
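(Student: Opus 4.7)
The two easy directions should go first. Implication (3) $\Rightarrow$ (1) is immediate. For (2) $\Rightarrow$ (3), I would leverage the homogeneity (A2): if openness at $0$ yields $T(\mb B_{X^*}) \supset B_{Y^*}(0,\eta)$ for some $\eta>0$, then since $T(aB) = a^s T(B)$, I get $T(B_{X^*}(0,r)) \supset B_{Y^*}(0,\eta r^s)$ for every $r > 0$. Given $f\in Y^*$, choose $r^s = 2\Vert f \Vert_{Y^*}/\eta$ to place $f$ inside $T(B_{X^*}(0,r))$; this produces $u$ with $Tu=f$ and $\Vert u\Vert_{X^*}^s \le (2/\eta) \Vert f\Vert_{Y^*}$.

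The crux is (1) $\Rightarrow$ (2), which I would prove via a Baire category argument exploiting the concentrating isometries. Set $A_n := T(\overline{B_{X^*}(0,n)})$. The first substep is that each $A_n$ is \emph{norm-closed} in $Y^*$: given $f_j = Tu_j \to f$ in norm with $\Vert u_j\Vert_{X^*} \le n$, the sequential weak$^*$ compactness of $\overline{B_{X^*}(0,n)}$ delivers $u_{j_k}\wstar u$, so by (A1) one has $Tu_{j_k}\wstar Tu$, while $Tu_{j_k}=f_{j_k}\to f$ in norm forces $Tu=f$ by uniqueness of weak$^*$ limits. Since $T$ is surjective and $Y^*$ is a Baire space, the decomposition $Y^*=\bigcup_n A_n$ produces $n_0$, $g \in A_{n_0}$ and $\eta > 0$ with $B_{Y^*}(g,\eta) \subset A_{n_0}$.

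Now I would use the intertwining (A3) to slide this ball around. Since $\sigma_k^{X^*}$ is an isometric isomorphism, $\sigma_k^{X^*}(\overline{B_{X^*}(0,n_0)}) = \overline{B_{X^*}(0,n_0)}$, and the commutation $T\circ \sigma_k^{X^*} = \sigma_k^{Y^*}\circ T$ gives $\sigma_k^{Y^*}(A_{n_0}) = A_{n_0}$; therefore $B_{Y^*}(\sigma_k^{Y^*}(g),\eta) \subset A_{n_0}$ for every $k$. Given any $h\in Y^*$ with $\Vert h\Vert_{Y^*} < \eta$, the element $h + \sigma_k^{Y^*}(g)$ lies in this shifted ball, so there exists $u_k\in \overline{B_{X^*}(0,n_0)}$ with $Tu_k = h + \sigma_k^{Y^*}(g)$. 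A further extraction provides $u_{k_j}\wstar u$ with $\Vert u\Vert_{X^*}\le n_0$, and by (A1), $Tu_{k_j}\wstar Tu$; on the other hand (A3) gives $\sigma_{k_j}^{Y^*}(g) \wstar 0$, so $Tu_{k_j} = h + \sigma_{k_j}^{Y^*}(g) \wstar h$. Uniqueness of weak$^*$ limits yields $Tu = h$, proving $B_{Y^*}(0,\eta)\subset T(\overline{B_{X^*}(0,n_0)})$, i.e.\ openness at the origin.

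The only subtle step is the last one: one has to spot that Baire category by itself only locates a ball \emph{somewhere} in $Y^*$, and that the weak$^*$-concentrating isometries of (A3) are precisely what is needed to transport this ball to the origin while preserving membership in $A_{n_0}$, with the weak$^*$ compactness plus (A1) harvesting the limit preimage. Everything else—closedness of $A_n$, Baire, scaling to recover the quantitative estimate—is routine once the weak$^*$ machinery is in place.
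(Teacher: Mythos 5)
Your proof is correct and takes essentially the same approach as the paper: Baire category applied to a decomposition of the range into norm-closed sets (closedness via weak$^*$ compactness of balls in $X^*$ and (A1)), translation of the resulting interior ball to the origin via the weak$^*$-concentrating isometries of (A3), and weak$^*$ compactness to harvest the limit preimage. The only cosmetic differences are that the paper decomposes $T(X^*)$ into the scaling-homogeneous sets $K_\ell = \{f : \exists\, u,\ Tu = f,\ \Vert u\Vert_{X^*}^s \le \ell\Vert f\Vert_{Y^*}\}$ so as to obtain the quantitative estimate directly, whereas you use $A_n = T(\overline{B_{X^*}(0,n)})$ and recover the estimate by a separate scaling step, and that the paper pushes the preimages forward by $\sigma_k^{X^*}$ while you instead observe the invariance $\sigma_k^{Y^*}(A_{n_0}) = A_{n_0}$.
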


Here and in the sequel $\mb B_{X^*}$ denotes the unit ball of $X^*$. The hypothesis that $\mb B_{X^*}$ is sequentially weak$^*$ compact holds, for instance, whenever $X$ is separable or reflexive. 
In general, the weak$^*$ topology on a dual space depends on the specific choice of predual: for instance, the spaces $c$ and $c_0$ induce different weak$^*$ topologies in their dual space, $\ell^1$. In fact it is only in this way that the spaces $X$ and $Y$ play a role in the statement of Theorem \ref{thm:abstractopenmappingprinciple}. We also note that the simple example $T\colon \R\times L^2(\R^n)\to L^2(\R^n), (t, f)\mapsto t f$ shows that there are operators satisfying the assumptions of Theorem \ref{thm:abstractopenmappingprinciple} which are not open at all points \cite{Downey2006}.

The assumption \ref{it:weakcontinuityintro} is not always necessary, but it holds automatically in finite dimensional examples. An infinite dimensional case where it is not needed is the multiplication operator $(f,g) \mapsto f g \colon L^p \times L^q \to L^r$, where $1/p+1/q=1/r$: this operator does not satisfy \ref{it:weakcontinuityintro}, although it verifies the open mapping principle \cite{Balcerzak2016, Balcerzak2013}. 

In Theorem \ref{thm:abstractopenmappingprinciple}, instead of considering multilinear operators as in Question \ref{qu:Rudin}, we consider the larger class of positively homogeneous operators, that is, operators satisfying \ref{it:homogeneityintro}. Nonetheless, many of the examples discussed in this paper are in fact multilinear.

The assumption \ref{it:isometriesintro} may look somewhat mysterious. However, as \textsc{Horowitz}'s example shows, it cannot be omitted.  
It should be thought of as \textit{generalised translation invariance} and indeed, when $T$ is a constant-coefficient partial differential operator and $X^*$ and $Y^*$ are function spaces on $\R^n$, natural choices of $\sigma_k^{X^*}$ and $\sigma_k^{Y^*}$ include translations 
$$\sigma_k^X u(x) \equiv u(x - k e) \tp{ and }\sigma_k^{Y^*} f(x) \equiv f(x - k e), \qquad \tp{where } e \in \R^n \setminus \{0\}.$$ We note that the condition \ref{it:isometriesintro} never holds if $Y$ is finite-dimensional and that moreover, when the target is two-dimensional, it is not needed: \textsc{Downey} has shown that, in this case, the answer to Question \ref{qu:Rudin} is positive \cite[Theorem 12]{Downey2006}.

In order for the reader to have a better grasp of the meaning of \ref{it:isometriesintro}, we briefly sketch the proof of Theorem \ref{thm:abstractopenmappingprinciple}, explaining the role of this assumption in it.
Suppose that, for all $f$ in some ball $B\subset Y^*$, one can solve the equation $Tu=f$. Since $T$ is weakly$^*$ continuous, it is not difficult to use the Baire Category Theorem to deduce that there is a sub-ball $B'\subseteq B$ such that one can actually solve $Tu=f$ quantitatively in $B'$: that is, there is a constant $C$ such that, for all $f\in B'$, there is $u\in X^*$ with $\Vert u \Vert_{X^*} \leq C$ and $Tu=f$. In other words: for weakly$^*$ continuous operators, qualitative  solvability implies quantitative local solvability \textit{somewhere}; in general, one cannot  specify the location of $B'$. The assumption \ref{it:isometriesintro} allows one to shift the centre of the ball $B'$ to the origin and, in combination with \ref{it:homogeneityintro}, it upgrades the previous local statement to a global version.

Compensated compactness theory \cite{Murat1981, Tartar1979} abounds with operators that satisfy the assumptions of Theorem \ref{thm:abstractopenmappingprinciple}. The most famous examples are the Jacobian, the Hessian and the div-curl product; see \cite{Coifman1992} for numerous examples and \cite{GuerraRaita2020, GuerraRaitaSchrecker2020} for a systematic study.  In fact, our motivation for Theorem \ref{thm:abstractopenmappingprinciple} came from considering the Jacobian operator and the spaces
\begin{equation}
X^*={\dot W}^{1,np}(\R^n,\R^n), \qquad Y^*=\mathscr H^p(\R^n), \qquad 1\leq p <\infty,
\label{eq:spaces}
\end{equation}
see Question \ref{qu:Lp} below. The real-variable Hardy space $\mathscr H^p(\R^n)$ is defined by fixing any $\Phi \in \mathscr{S}(\R^n)$ with $\int_{\R^n} \Phi(x) \d x \neq 0$, denoting $\Phi_t(x) \equiv \Phi(x/t)/t^n$ for all $(x,t) \in \R^n \times (0,\infty)$ and setting 
$$\mathscr{H}^p(\R^n) \equiv \big\{f \in \mathscr{S}'(\R^n): \Vert f \Vert_{\mathscr{H}^p} \equiv \Vert \sup_{t > 0} |f * \Phi_t(\cdot)|\Vert_{L^p} < \infty\big\}.$$ 
We refer the reader to the monograph \cite{Stein2016} for the theory of $\mathscr H^p(\R^n)$. Here we just note that
$\mathscr{H}^p(\R^n) = L^p(\R^n)$, with equivalent norms, whenever $p\in (1,\infty)$ and that moreover $\mathscr{H}^1(\R^n) \subsetneq \{f \in L^1(\R^n) \colon \int_{\R^n} f(x) \d x = 0\}$. Indeed, loosely speaking,  elements of $\mathscr H^1(\R^n)$ have an extra logarithm of integrability \cite{Stein1969}. In \eqref{eq:spaces}, the space ${\dot W}^{1,np}(\R^n,\R^n)$ is the usual homogeneous Sobolev space, seen as a Banach space, so that elements which differ by constants are identified.

\subsection{Applications to the Jacobian equation}
\label{sec:introJacobian}
 
As discussed in the last subsection, we are interested in  the Jacobian and, therefore, in the prescribed Jacobian equation
\begin{equation}
\label{eq:jac}
\tp{J}u\equiv \det{\D u}=f \qquad \tp{in } \R^n.
\end{equation}
This first-order equation appears naturally in Optimal Transport \cite{Brenier1991} and can be seen as the underdetermined analogue of the Monge--Ampère equation, see \cite{GuerraKochLindberg2020} for further discussion. It has a deep geometric content as, formally, one has the change of variables formula
\begin{equation}
\label{eq:changeofvariables}
\int_E \J u(x) \d x = \int_{\R^n} \# \big( u^{-1}(y)\cap E\big) \d y, \qquad E\subset \R^n \tp{ is measurable.}
\end{equation}
Thus, for a smooth solution of \eqref{eq:jac}, $f$ measures the size of its image, counted with multiplicity.

When applied to the Jacobian, Theorem \ref{thm:abstractopenmappingprinciple} reads as follows:
\begin{bigcor}\label{cor:openmapJacobian}
Fix $1\leq p <\infty$. The following statements are equivalent:
\begin{enumerate}
\item $\J\colon {\dot W}^{1,np}(\R^n,\R^n)\to \mathscr H^p(\R^n)$ is surjective;
\item there is a {\normalfont bounded} operator $E\colon {\dot W}^{1,np}(\R^n,\R^n)\to \mathscr H^p(\R^n)$ such that $\J \circ E=\tp{Id}$;
\item for all $f\in \mathscr H^p(\R^n)$ there is $u\in {\dot W}^{1,np}(\R^n,\R^n)$ such that $\J u = f$ and 
\begin{equation}
\label{eq:aprioriestimateintro}
\Vert \D u \Vert_{L^{np}(\R^n)}^n \lesssim \Vert f \Vert_{\mathscr H^p(\R^n)}.
\end{equation}
\end{enumerate}
\end{bigcor}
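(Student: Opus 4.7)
The plan is to verify that the Jacobian $\J\colon \dot W^{1,np}(\R^n,\R^n)\to \mathscr H^p(\R^n)$ satisfies the hypotheses \ref{it:weakcontinuityintro}--\ref{it:isometriesintro} of Theorem \ref{thm:abstractopenmappingprinciple} with homogeneity exponent $s=n$, and then to invoke that theorem directly. The three equivalences in the corollary then correspond exactly to (i)--(iii) of Theorem \ref{thm:abstractopenmappingprinciple}, the exponent $n$ in \eqref{eq:aprioriestimateintro} arising precisely because $s=n$.

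First I would pin down the predual structures so that the weak$^*$ topologies are meaningful. For $1<p<\infty$ both $\dot W^{1,np}(\R^n,\R^n)$ and $\mathscr H^p(\R^n)=L^p(\R^n)$ are reflexive, so sequential weak$^*$ compactness of the unit ball is automatic. For $p=1$ one takes $\mathrm{VMO}(\R^n)$ as the predual of $\mathscr H^1(\R^n)$ via Fefferman--Stein duality, and chooses the natural separable predual of $\dot W^{1,n}(\R^n,\R^n)$; separability of both preduals then gives sequential weak$^*$ compactness of $\mb B_{X^*}$.

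The assumptions \ref{it:homogeneityintro} and \ref{it:isometriesintro} are immediate. Homogeneity holds with $s=n$ because $\J(au)=\det(a\D u)=a^n \J u$. For \ref{it:isometriesintro}, fix a direction $e\in\R^n\setminus\{0\}$ and define $\sigma_k^{X^*}u(x)\equiv u(x-ke)$ and $\sigma_k^{Y^*}f(x)\equiv f(x-ke)$. These are isometries of $\dot W^{1,np}$ and $\mathscr H^p$ since both norms are translation invariant, they commute with $\J$ because the determinant acts pointwise, and $\sigma_k^{Y^*}f\wstar 0$ for every fixed $f$, because translates of a single function by arbitrarily large vectors converge weakly$^*$ to zero.

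The main technical step, and the one I expect to present the main obstacle, is \ref{it:weakcontinuityintro}: weak$^*$-to-weak$^*$ sequential continuity of $\J$. The key identity is the divergence structure $\J u = \sum_i \p_i(u^1\,[\tp{Cof}\,\D u]_{1i})$, which shows that $\J u_k\to \J u$ in the sense of distributions whenever $u_k\rightharpoonup u$ in $\dot W^{1,np}$, since the cofactors are null Lagrangians of lower order and hence weakly continuous into $L^p$. Combining this with the uniform bound $\Vert \J u\Vert_{\mathscr H^p}\lesssim\Vert\D u\Vert_{L^{np}}^n$ of Coifman--Lions--Meyer--Semmes, together with the general fact that an $\mathscr H^p$-bounded distributionally convergent sequence converges weakly$^*$ in $\mathscr H^p$, yields \ref{it:weakcontinuityintro} for $1<p<\infty$; the borderline case $p=1$ is the most delicate, as one must test against $\mathrm{VMO}$ functions and approximate carefully. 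Once \ref{it:weakcontinuityintro}--\ref{it:isometriesintro} are in place, Theorem \ref{thm:abstractopenmappingprinciple} applies and immediately delivers the corollary, with the bounded right inverse in statement (ii) produced via its abstract version.
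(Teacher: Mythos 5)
Your proposal is correct and follows the same route as the paper: verify that $\J$ satisfies \ref{it:weakcontinuityintro}--\ref{it:isometriesintro} of Theorem~\ref{thm:abstractopenmappingprinciple} with homogeneity exponent $s=n$, then read off the three equivalences. The paper dispatches the verification inside its compensated-compactness example (citing that the Jacobian is a weakly continuous $n$-homogeneous integrand on curl-free fields), whereas you carry out the weak$^*$-to-weak$^*$ continuity by hand via the divergence structure and the CLMS bound; the one step worth making explicit there is that, to pass to the limit in the product $u_k^1\,[\tp{Cof}\,\D u_k]_{1i}$, weak convergence of the cofactors must be paired with \emph{local strong} $L^q$-convergence of $u_k^1$ (Rellich, after fixing the additive constant ambiguity in $\dot W^{1,np}$), which is the usual iterated div--curl argument.
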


We briefly describe the motivation behind Corollary \ref{cor:openmapJacobian}. The Jacobian benefits from improved integrability: after a remarkable result of \textsc{M\"uller} \cite{Muller1990}, 
\textsc{Coifman}, \textsc{Lions}, \textsc{Meyer} and \textsc{Semmes} proved in \cite{Coifman1993} that
$$u\in {\dot W}^{1,n}(\R^n,\R^n)\quad \implies \quad \J u\in \mathscr H^1(\R^n)$$
and that various other compensated compactness quantities also enjoy $\mathscr{H}^1$ integrability. \textsc{Coifman \& al.}\ proceeded to ask whether compensated compactness quantities are surjective into the Hardy space $\mathscr{H}^1(\R^n)$. The third author showed in \cite{Lindberg2017} that the question must be formulated in terms of homogeneous Sobolev spaces, if it is to have a positive answer.
The following problem remains open:

\begin{question}\label{qu:Lp}
For $p\in [1,\infty)$ and  $f\in \mathscr{H}^p(\R^n)$, is there $u\in \dot{W}^{1,np}(\R^n,\R^n)$ solving \eqref{eq:jac}?
\end{question}

We note that the case $p>1$ was posed by \textsc{Iwaniec}, see also Conjecture \ref{conj:iwaniec} below for a stronger formulation of Question \ref{qu:Lp}.

It is very difficult to predict the answer to Question \ref{qu:Lp}, and in both directions, the currently available evidence is only tentative. A positive answer  seems currently out of reach, as there is no systematic way of building solutions to \eqref{eq:jac} for a general discontinuous $f$, although see \cite{Riviere1996} for some endpoint cases. This is in contrast with the case of H\"older continuous $f$, where there is a well-posedness theory which goes back to the works of \textsc{Dacorogna} and \textsc{Moser} \cite{Dacorogna1990, Moser1965}, see also \cite{Csato2012, Ye1994} and the references therein. 

In this subsection, we focus on progress towards a negative answer to Question \ref{qu:Lp}.
Our motivation for doing so comes, to some extent, from our results in \cite{GuerraKochLindberg2020}. There, we proved that a variant of Question \ref{qu:Lp}---obtained by replacing $\R^n$ with a bounded, smooth domain $\Omega$ and additionally imposing Dirichlet boundary conditions on the solutions---is false. 

The main difficulty in proving non-existence of solutions to \eqref{eq:jac} is the \textit{underdetermined} nature of the equation: it implies that there is a multitude of possible solutions to rule out. This is the main reason why Question \ref{qu:Lp} is much harder than its analogue on a bounded domain, as the lack of boundary conditions makes the problem even more underdetermined. Indeed, the space ${\dot W}^{1,np}(\R^n,\R^n)$ is extremely large, and so there is an abundance of possible solutions to consider, and moreover it contains many poorly-behaved maps, especially when $p=1$. We now make precise this last point.

When $p=1$, there are continuous maps in ${\dot W}^{1,n}(\R^n,\R^n)$ which do not satisfy the change of variables formula \eqref{eq:changeofvariables}, as they map a null set $E$ into a set of positive measure, and hence
$$0=\int_E f\d x < |u(E)|\leq \int_{\R^n} \# \big( u^{-1}(y)\cap E\big) \d y.$$
It is thus possible for \eqref{eq:jac} to hold a.e.\ in $\R^n$, and hence also in the sense of distributions, and yet for its geometric information to be completely lost! Maps as above are said to violate the \textit{Lusin (N) property}; their existence is classical and goes back to the work of \textsc{Cesari} \cite{Cesari1942}, see also \cite{Maly1995} for a more refined version.

We are interested in considering \textit{admissible solutions} of \eqref{eq:jac}, i.e.\ solutions for which the geometric information of the equation is preserved; here our choice of terminology is inspired by the fluid dynamics literature. Corollary \ref{cor:openmapJacobian} yields the quite surprising fact that
\begin{quote}
\textit{the existence of rough solutions implies the existence of admissible solutions}.
\end{quote}
This fact is made precise in the following result:

\begin{bigthm}\label{bigthm:nicesols}
Let $\Omega\subset \R^n$ be a bounded open set and take $f\in \mathscr H^1(\R^n)$ such that $f\geq 0$ in $\Omega$. Assume that $\J\colon {\dot W}^{1,n}(\R^n,\R^n)\to \mathscr H^1(\R^n)$ is onto. Then there is a solution $u\in {\dot W}^{1,n}(\R^n,\R^n)$ of \eqref{eq:jac} such that:
\begin{enumerate}
\item\label{it:continuityintro} $u$ is continuous in $\Omega$;
\item\label{it:Lusinintro} $u$ has the Lusin (N) property in $\Omega$;
\item $\int_{\R^n} |\D u|^n \d x \le C \Vert f \Vert_{\mathscr{H}^1}$ with $C > 0$ independent of $f$.
\end{enumerate} 
In particular, $u$ satisfies the change of variables formula \eqref{eq:changeofvariables}.

Moreover, if $n=2$ and there is an open set $\Omega'\subseteq \Omega$ with $f=0$ a.e.\ in $\Omega'$, then:
\begin{enumerate}[resume]
\item for any set $E\subset \Omega'$, we have $u(\p E)=u(\overline E)$;
\item\label{it:lastproperty} for $y\in u(\Omega')$, if $C$ denotes a connected component of $u^{-1}(y)\cap \Omega'$ then $C$ intersects $\p \Omega'$.
\end{enumerate}
\end{bigthm}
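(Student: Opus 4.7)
The plan is to combine the quantitative solvability granted by Corollary~\ref{cor:openmapJacobian} with classical constructions of admissible solutions for smooth positive data, so as to produce $u$ via an approximation/surgery scheme. Property (iii) is immediate: any $u$ coming from the bounded right inverse of Corollary~\ref{cor:openmapJacobian} automatically satisfies $\Vert \D u\Vert_{L^n}^n\lesssim \Vert f\Vert_{\mathscr H^1}$. For (i)--(ii) I would first approximate $f$ by $f_k\in C^\infty_c(\R^n)$ with $f_k\to f$ in $\mathscr H^1$, arranging via a nonnegative mollifier and an exhaustion $\Omega_k\Subset\Omega_{k+1}\uparrow\Omega$ that $f_k\geq 0$ on $\Omega_k$. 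The goal is then to produce, for each $k$, an admissible $u_k\in\dot W^{1,n}(\R^n,\R^n)$ solving $\J u_k=f_k$, continuous with the Lusin (N) property on $\Omega_k$, and respecting the uniform estimate $\Vert \D u_k\Vert_{L^n}^n\leq C\Vert f_k\Vert_{\mathscr H^1}$.

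The construction of such $u_k$ is the heart of the argument. I would minimize the energy $\Vert \D u\Vert_{L^n}^n$ over all $u\in\dot W^{1,n}(\R^n,\R^n)$ with $\J u=f_k$: by Corollary~\ref{cor:openmapJacobian} this class is nonempty with infimum bounded by $C\Vert f_k\Vert_{\mathscr H^1}$, and a minimizer exists thanks to $L^n$-lower semicontinuity combined with the weak$^*$-to-distributional continuity of $\J$ on bounded sets of $\dot W^{1,n}$ (\cite{Coifman1993}). To upgrade such a minimizer to be continuous with Lusin (N) on $\Omega_k$, I would use a surgery argument: the classical \textsc{Dacorogna}--\textsc{Moser} theory (see also \cite{Ye1994}) produces $C^1$ admissible solutions of the Jacobian equation with smooth positive right-hand side on any ball $B\Subset\Omega_k$, with prescribed compatible boundary data; any non-admissible patch of the minimizer inside $\Omega_k$ can then be cut out and replaced by such a classical piece, matching boundary values and, by minimality, without increasing the energy. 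Passing to a weak$^*$ limit $u_k\rightharpoonup u$ in $\dot W^{1,n}$ and invoking weak continuity of $\J$ yields $\J u=f$ and, via $L^n$-lower semicontinuity, (iii); continuity and Lusin (N) on $\Omega$ then transfer to $u$ by local uniform convergence on compact subsets of $\Omega$, delivered by the interior estimates inherited from the classical construction.

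For the $n=2$ conclusions (iv), (v), the vanishing of $\J u$ a.e.\ on $\Omega'$ combined with the continuity and Lusin (N) of $u$ forces a planar topological degeneracy. Lusin (N) gives $|u(\Omega')|=0$, and the integral degree formula $\int_E \J u\cdot\phi(u)\d x = \int_{\R^n}\deg(u,E,y)\phi(y)\d y$ forces $\deg(u,E,y)=0$ a.e.\ for every open $E\Subset\Omega'$. Property (iv) follows since any $y\in u(\overline E)\setminus u(\p E)$ would lie in a connected component of $\R^n\setminus u(\p E)$ where the degree is locally constant and a.e.\ zero, hence identically zero, contradicting $y\in u(E)$; (v) is analogous, since a connected component of $u^{-1}(y)\cap\Omega'$ compactly contained in $\Omega'$ could be isolated in a neighbourhood on which the degree at $y$ would be both nonzero and zero. \emph{The hard part} will be the surgery/minimization step: reconciling the qualitative continuity from \textsc{Dacorogna}--\textsc{Moser}-type existence, whose a priori estimates depend on stronger norms of $f_k$ that may blow up, with the quantitative $\mathscr H^1$-bound from the bounded right inverse is the crux of the argument.
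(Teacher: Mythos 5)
There is a genuine gap, and the paper's actual proof is much simpler precisely because it avoids the route you flag as "the hard part."  The key observation you are missing is that one does not need to \emph{construct} nice solutions or do any surgery with Dacorogna--Moser pieces: it suffices to perturb $f$ to a \emph{strictly positive} datum on $\Omega$, because then every solution automatically has finite distortion and the regularity is free from Geometric Function Theory.  Concretely, the paper sets $f_\e = f + \e(\chi_{B^+}-\chi_{B^-})$ with $B^+\supset\Omega$ and $B^-$ disjoint of equal volume, so $f_\e \to f$ in $\mathscr H^1$ while $f_\e \ge \e$ a.e.\ in $\Omega$.  Corollary~\ref{cor:openmapJacobian} produces $u_\e$ with $\J u_\e = f_\e$ and $\Vert \D u_\e\Vert_{L^2}^2 \lesssim \Vert f_\e\Vert_{\mathscr H^1}$; since $\J u_\e>0$ a.e.\ in $\Omega$, each $u_\e$ has finite distortion there, hence (after normalization) the family is equicontinuous by the modulus of continuity estimate in Theorem~\ref{thm:theoryofFDmaps}.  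Passing to a locally uniform and weak limit $u$ gives (i) and (iii), and (ii) follows because the pseudomonotonicity inequality \eqref{eq:pseudomonotone} passes to uniform limits, so Proposition~\ref{prop:pseudomonotonemappingsLusin} applies.  Your surgery step — cutting out a patch of a minimizer and replacing it by a Dacorogna--Moser piece "without increasing the energy" — is not justified: there is no reason the replacement has lower $L^n$ energy, nor is it clear that a minimizer of the constrained problem can be produced (the constraint set $\{\J u=f_k\}$ is not obviously weakly closed without the uniform $\mathscr H^1$-bound you are trying to prove).  You have effectively displaced the difficulty into an unproven claim.

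There is also a gap in your degree-theoretic argument for (iv)--(v): the implication $y\in u(E)\setminus u(\partial E)\ \Rightarrow\ \deg(u,E,y)\neq 0$ fails for general continuous Sobolev maps (a fold has degree zero over points in its image).  The paper again leans on the strict positivity of $\J u_\e$ on $\Omega$: because $u_\e$ has \emph{integrable} distortion there, it is an open map, so $\partial u_\e(E)\subseteq u_\e(\partial E)$, and the change of variables formula gives $|u_\e(\operatorname{int} E)|\leq \e|E|\to 0$.  Combining these two facts with the local uniform convergence $u_\e\to u$ yields (iv) directly, without degree theory, and (v) then follows from (iv) as in \cite[Lemma 2.10]{Kirchheim2001}.
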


Theorem \ref{bigthm:nicesols} is proved through a regularisation argument: due to Corollary \ref{cor:openmapJacobian}, powerful tools from Geometric Function Theory become available.
In the supercritical regime $p>1$, the first part of Theorem \ref{bigthm:nicesols} holds automatically, although one can still use the a priori estimate \eqref{eq:aprioriestimateintro} to get solutions with additional structure, see Section \ref{sec:nicesols} for further details. 
The second part of Theorem \ref{bigthm:nicesols} also holds in any dimension if $p$ is taken to be sufficiently large.

Contrary to the approach discussed so far, it may be that the answer to Question \ref{qu:Lp} is positive. In this direction, \textsc{Iwaniec} suggested in \cite{Iwaniec1997} that one should prove \eqref{eq:aprioriestimateintro} for all \emph{$p$-energy minimisers}, that is, solutions of \eqref{eq:jac} which minimise $\int_{\R^n} |\D u|^{np} \d x$ under the constraint $\tp{J} u = f$. To achieve this goal, \textsc{Iwaniec} has proposed that one construct a \emph{Lagrange multiplier} for every $p$-energy minimiser. This turns out to be a very difficult task since the standard methods fail, see Section \ref{sec:submersion} for further discussion. Nonetheless, when $p=1$ and $n=2$, uniformly bounded Lagrange multipliers were constructed in \cite{Lindberg2015,Lindberg2020} for a large class of $p$-energy minimisers, which then automatically satisfy the  estimate \eqref{eq:aprioriestimateintro}; we also note that the methods of \cite{Lindberg2015,Lindberg2020} can be partly adapted to all the cases $n \ge 2$, $p \in [1,\infty)$. 

In  \cite{Iwaniec1997}, see also \cite{Bonami2007}, \textsc{Iwaniec}  went further than Question \ref{qu:Lp} and posed the following:
\begin{conj}\label{conj:iwaniec}
For $p\in [1,\infty)$, there is a continuous map
$E\colon \mathscr{H}^p(\R^n)\to \dot{W}^{1,np}(\R^n,\R^n)$ such that $\J \circ E = \tp{Id}$.
\end{conj}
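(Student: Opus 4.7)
The plan is to promote the bounded right inverse supplied by Corollary \ref{cor:openmapJacobian} to a continuous one via a selection argument. This presupposes a positive answer to Question \ref{qu:Lp}, so I would begin by assuming $\J$ is onto, whence Corollary A supplies a bounded map $E_0\colon \mathscr H^p(\R^n)\to \dot W^{1,np}(\R^n,\R^n)$ with $\J\circ E_0=\tp{Id}$. The goal is then to replace $E_0$ by a continuous selection of the set-valued map
\[
\Phi(f)\equiv \bigl\{u\in \dot W^{1,np}(\R^n,\R^n)\colon \J u=f,\ \|\D u\|_{L^{np}}^n\le C\|f\|_{\mathscr H^p}\bigr\},
\]
whose values are nonempty by Corollary A and whose graph lives inside a bounded piece of $\dot W^{1,np}$.

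My first attempt would be through a continuous selection theorem of Michael type. Closedness of each fibre $\Phi(f)$ should follow from the weak$^*$-to-weak$^*$ sequential continuity of $\J$ (verified in the abstract setting of Theorem \ref{thm:abstractopenmappingprinciple}) combined with the a priori estimate, and lower semicontinuity of $\Phi$ should be deducible from the translation and scaling invariances already exploited there. The immediate obstruction is that the fibres are \emph{not} convex, since $\J$ is genuinely nonlinear. I would therefore seek a non-convex version of Michael's theorem, for instance one requiring the fibres to be contractible or \tp{ANR}-valued; the necessary preliminary step is to show that $\Phi(f)$ is path-connected, which should be tractable by right-composing solutions with flows of divergence-free vector fields, since such right-compositions preserve $\J$ and act continuously on $\dot W^{1,np}$.

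An alternative, more hands-on route is to exploit the atomic decomposition $f=\sum_k\lambda_k a_k$ of elements of $\mathscr H^p$: one builds $E(f)$ atom-by-atom via Dacorogna--Moser type constructions for each $a_k$, together with an iterative correction scheme that offsets the nonlinearity of $\J$. The main obstacle, in either route, is precisely this nonlinearity: $\J(u_1+u_2)\neq \J u_1+\J u_2$, so both the convex-selection approach and the linear-superposition approach fail at face value. I expect the truly hard part to be producing a scheme whose dependence on $f$ is \emph{continuous} rather than merely bounded; in view of the fact that Question \ref{qu:Lp} itself remains open in the endpoint case $p=1$, and that Theorem \ref{bigthm:nicesols} shows admissibility of solutions is already a delicate matter, the continuity of $E$ should be regarded as the deepest layer of the problem and plausibly requires a fundamentally new construction of solutions to \eqref{eq:jac}.
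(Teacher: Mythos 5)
The statement you were asked to prove is labelled as a \emph{Conjecture} in the paper, attributed to Iwaniec, and the paper neither proves it nor claims to: it is explicitly an open problem, and indeed the paper presents it as stronger than the already-open Question~\ref{qu:Lp}. Thus there is no ``paper's own proof'' to compare against, and the correct assessment of your proposal is that it leaves a genuine gap --- as you yourself candidly admit in the final sentence. Both routes you sketch (a Michael-type continuous selection from the fibres $\Phi(f)$, or an atom-by-atom Dacorogna--Moser construction with corrections) run into the obstructions you correctly identify: the fibres of $\J$ are not convex, Michael's theorem for non-convex set-valued maps requires structure (contractibility, equi-LC$^n$, ANR values) that is not established for $\Phi(f)$, and the failure of $\J(u_1+u_2)=\J u_1+\J u_2$ breaks any naive superposition. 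None of these obstructions is resolved, so the argument does not close.

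It is worth adding that the paper itself tilts toward the conjecture being \emph{false} rather than true. Your starting premise --- that Corollary~\ref{cor:openmapJacobian} hands you a bounded right inverse $E_0$ --- is itself conditional on the unresolved surjectivity of $\J$, and even granting it, the paper shows (Corollary~\ref{cor:JacobianNondifferentiable}) that for $p\neq 2$ any right inverse is Fr\'echet-differentiable nowhere, which rules out the most regular versions of your selection. Moreover the text points out that Iwaniec's proposed route to a continuous $E$ --- uniqueness of $p$-energy minimisers up to rotations, which would furnish a canonical selection --- is undermined by the announced construction in~\cite{GuerraKochLindberg2020c} of data admitting uncountably many $p$-energy minimisers. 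So the honest conclusion matching the paper is not a proof sketch at all but the observation that Conjecture~\ref{conj:iwaniec} remains open and that the available evidence (non-differentiability, non-uniqueness of minimisers, and the known failure of the inhomogeneous analogue) is largely negative.
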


Conjecture \ref{conj:iwaniec} proposes the existence of a continuous \textit{right inverse} for the Jacobian (abbreviated in this paragraph as r.i.). We have the following trivial chain of implications:
$$
\begin{array}{l}
\J \tp{ has a bounded,}\\
\tp{F-differentiable r.i.}
\end{array}
\implies 
\begin{array}{l}
\J \tp{ has a bounded,}\\
\tp{continuous r.i.}
\end{array}
\implies
\J \tp{ has a bounded r.i.}
\implies
\J \tp{ has a r.i.}
$$
Corollary \ref{cor:openmapJacobian} states the equivalence of the last two statements, while Corollary \ref{cor:JacobianNondifferentiable} below shows that the first statement is false. Finally, and concerning the second statement,  \textsc{Iwaniec} pointed out in~\cite{Iwaniec1997} that it is plausible that $p$-energy minimisers are unique up to rotations: if so, this would pave a road towards a solution of Conjecture \ref{conj:iwaniec}. However, and as further negative evidence towards Question \ref{qu:Lp} and Conjecture \ref{conj:iwaniec}, there are some data which admit uncountably many $p$-energy minimisers. This will be proved in our forthcoming work~\cite{GuerraKochLindberg2020c}.

\subsection{General nonlinear open mapping principles} 
\label{sec:generalisedopenmap}
In this subsection we formulate a more general open mapping principle. Precise statements, as well as multiple examples, can be found in Sections \ref{sec:A general nonlinear open mapping principle for scale-invariant problems} and \ref{sec:Non-surjectivity under incompatible scalings}, but Theorem \ref{bigthm:incompatiblescalings} below contains already a rough version of our main result.

In Theorem \ref{bigthm:incompatiblescalings},  the positive homogeneity assumption \ref{it:homogeneityintro} from Theorem \ref{thm:abstractopenmappingprinciple} is replaced by more general scaling symmetries; the assumption of translation invariance will be kept. To be precise, we assume that the equation ${T u(x,t) = f(x,t)}$, $(x,t) \in \R^n \times [0,\infty)$ is invariant under a one-parameter group of scalings
\begin{equation} \label{eq:scalingsintro}
\tau^{X^*}_\lambda[u](x,t) \equiv \frac{1}{\lambda^\alpha} u\left( \frac{x}{\lambda^\beta},\frac{t}{\lambda^\gamma} \right), \qquad \tau^{Y^*}_\lambda[f](x,t) \equiv \frac{1}{\lambda^\delta} f \left( \frac{x}{\lambda^\beta},\frac{t}{\lambda^\gamma} \right),
\end{equation}
where $\alpha,\beta,\gamma,\delta \in \R$ are fixed and the group parameter is $\lambda > 0$. In our typical applications, $f$ is the initial datum of a Cauchy problem and $T$ is the solution-to-datum map.

Invariance under translations and scalings is an ubiquitous feature of physical processes. It expresses the \emph{covariance principle} that the solutions of a PDE representing a physical phenomenon should not have a form which depends on the location of the observer or the units that the observer is using to measure the system \cite{Budd2001}. For the computation of the symmetry groups of several representative PDEs we refer to \cite[\textsection 2.4]{Olver1993} and for the general role of scaling symmetries in physics and other sciences to \cite{Barenblatt2003}.

It often happens that a PDE has \textit{several scaling symmetries}. For instance, the positive $n$-homogeneity of the Jacobian operator can be expressed as symmetry of the equation $\tp{J} u = f$ under the scaling $u_\lambda = \lambda u$, $f_\lambda = \lambda^n f$ for all $\lambda > 0$, but the Jacobian equation also has the scaling symmetry $u_\lambda(x) = \lambda u(x/\lambda)$, $f_\lambda(x) = f(x/\lambda)$. For another example, concerning the incompressible Euler equations, see \eqref{eq:Euler1}--\eqref{eq:Eulerscalings}.  An important theme in this work is that, whenever a PDE has several scaling symmetries, these symmetries must be \textit{compatible} in order for the equation to be solvable for all data. 

The next result encapsulates the two previous points: many scale-invariant PDEs satisfy a nonlinear open mapping principle and, for the equation to be solvable, the associated scalings need to be compatible.

\begin{bigthm}[Rough version]\label{bigthm:incompatiblescalings}
Consider a constant-coefficient system of PDEs, posed over $\R^n\times [0,\infty)$, which moreover is preserved under weak$^*$ convergence. Let $T$ be the solution-to-datum operator associated with the PDE.

Suppose that the equation $T u=f$ is invariant under the scalings \eqref{eq:scalingsintro} and that the solutions and the data lie in \emph{homogeneous} function spaces, which satisfy, for some $r,s\in \R$,
\[\Vert \tau^{X^*}_\lambda[u] \Vert_{X^*} \equiv \lambda^r \Vert u \Vert_{X^*}, \qquad \Vert \tau^{Y^*}_\lambda[f] \Vert_{Y^*} \equiv \lambda^s \Vert f \Vert_{Y^*},\qquad
\text{ where } r s >0.\]
The following statements are then equivalent:
\begin{enumerate}
\item For all $f \in Y^*$ there is $u \in X^*$ with $T u = f$;

\item For all $f \in Y^*$ there is $u \in X^*$ with $T u = f$ and $\Vert u \Vert_{X^*}^{s/r} \le C \Vert f \Vert_{Y^*}$.
\end{enumerate}
Moreover, suppose that $T$ is invariant under another pair of scalings $\tilde \tau_\lambda^{X^*}, \tilde \tau_\lambda^{Y^*}$, which satisfy
\[\Vert \tilde\tau^{X^*}_\lambda[u] \Vert_{X^*} \equiv \lambda^{\tilde r} \Vert u \Vert_{X^*}, \qquad \Vert \tilde \tau^{Y^*}_\lambda[f] \Vert_{Y^*} \equiv \lambda^{\tilde s} \Vert f \Vert_{Y^*},\qquad
\text{ where } \tilde r \tilde s >0.\]
Then solvability of the equation $Tu=f$ requires compatibility of the scalings, i.e.\
$$T \text{ is surjective } \quad \implies \quad r/s= \tilde r/\tilde s.$$
\end{bigthm}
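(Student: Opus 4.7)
The plan is to prove the two parts in sequence, leveraging the scheme of Theorem~\ref{thm:abstractopenmappingprinciple} for the open mapping equivalence and then a clean scaling-comparison argument for the compatibility statement. For (i)$\Rightarrow$(ii), I would reproduce the Baire category step sketched after Theorem~\ref{thm:abstractopenmappingprinciple}: surjectivity combined with preservation of the equation under weak$^*$ convergence produces a ball $B\subset Y^*$ on which $Tu=f$ admits a solution of uniformly bounded $X^*$-norm, say $\|u\|_{X^*}\le C$ for $f\in B$. The constant-coefficient, i.e.\ translation-invariant, nature of the equation lets one relocate $B$ to a ball around the origin. Given an arbitrary $f\in Y^*$, I would then choose $\lambda>0$ so that $\|\tau_\lambda^{Y^*}f\|_{Y^*}=\lambda^s\|f\|_{Y^*}$ lies inside the rescaled ball, solve the equation for the rescaled datum, and transport the solution back via $\tau_{\lambda^{-1}}^{X^*}$; since $\|\tau_{\lambda^{-1}}^{X^*}u\|_{X^*}=\lambda^{-r}\|u\|_{X^*}$, this yields $\|u\|_{X^*}^{s/r}\lesssim \|f\|_{Y^*}$. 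The hypothesis $rs>0$ ensures both that such a $\lambda$ can always be chosen and that the resulting exponent is positive.

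For the compatibility assertion, introduce the minimal-norm function
\[
F(f) \equiv \inf\{\|u\|_{X^*} : Tu=f\}, \qquad f\in Y^*,
\]
which, by surjectivity together with the estimate of part~(i), satisfies $F(f)^{s/r}\le C\|f\|_{Y^*}$. I claim that $F$ is strictly positive on $Y^*\setminus\{0\}$ and equivariant under both scaling groups. Positivity: if $F(f)=0$, take $u_n$ with $Tu_n=f$ and $\|u_n\|_{X^*}\to 0$; then $u_n\overset{*}{\rightharpoonup}0$, so preservation of the equation under weak$^*$ convergence yields $f=T(0)$, while the scale invariance $\tau_\lambda^{Y^*}T(0)=T(\tau_\lambda^{X^*}0)=T(0)$ combined with $s\neq 0$ forces $T(0)=0$ and hence $f=0$. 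Equivariance: since $\tau_\lambda^{X^*}$ rescales the $X^*$-norm by $\lambda^r$ and intertwines $T$ with $\tau_\lambda^{Y^*}$, one has $F(\tau_\lambda^{Y^*}f)=\lambda^r F(f)$, and analogously $F(\tilde\tau_\lambda^{Y^*}f)=\lambda^{\tilde r}F(f)$.

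The compatibility now drops out. Apply the estimate of part~(i) to the datum $\tilde\tau_\lambda^{Y^*}f$:
\[
\bigl(\lambda^{\tilde r}F(f)\bigr)^{s/r}=F(\tilde\tau_\lambda^{Y^*}f)^{s/r}\le C\,\|\tilde\tau_\lambda^{Y^*}f\|_{Y^*}=C\lambda^{\tilde s}\|f\|_{Y^*}.
\]
Fixing any $f\neq 0$, for which $F(f)>0$, this reads $\lambda^{\tilde r s/r-\tilde s}\le C'$ for \emph{every} $\lambda>0$, forcing the exponent to vanish: $\tilde r/\tilde s=r/s$.

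The step I expect to demand the most care is the Baire category argument in part~(i). Unlike in Theorem~\ref{thm:abstractopenmappingprinciple}, the solution set in Theorem~\ref{bigthm:incompatiblescalings} need not be a vector space---the solution-to-datum operator of an initial value problem is typically defined only on a nonlinear manifold---so one cannot directly invoke the closed graph theorem or Banach--Steinhaus. The substitute is to run the Baire argument on the sublevel sets $\{f\in Y^*:F(f)\le n\}$, whose closedness in $Y^*$ is precisely what the weak$^*$-preservation hypothesis is designed to guarantee. Packaging this in enough generality to cover both the purely spatial open mapping principle and the evolutionary applications in Section~\ref{sec:applicationstofluids} is the main technical content of the argument.
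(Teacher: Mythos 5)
Your part~(i) follows the same Baire-category scheme as the paper (Theorem~\ref{thm:abstractopenmappingprinciple2} is the precise version), and your sketch is accurate. Your part~(ii), however, takes a genuinely different and arguably cleaner route than the paper's Theorem~\ref{thm:linearproblems}. The paper argues by contradiction: assuming surjectivity, it applies the a priori estimate for the second scaling to the datum $\tau_\lambda^{Y^*}f$, transports back via $(\tau_\lambda^D)^{-1}$ to get a solution $u^{(\lambda)}$ of $Tu^{(\lambda)} = f$ whose norm tends to zero as $\lambda\to\infty$ (or $\lambda\to 0$, after first assuming WLOG $\tilde s/\tilde r > s/r$), and then uses weak$^*$ closure and the scaling of $T0$ to force $f = 0$. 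You instead introduce the minimal-norm function $F(f) = \inf\{\|u\|_{X^*} : Tu = f\}$, establish its equivariance $F(\tilde\tau_\lambda^{Y^*}f) = \lambda^{\tilde r}F(f)$ and strict positivity on $Y^*\setminus\{0\}$, and then read off $r/s = \tilde r/\tilde s$ directly by feeding the a priori estimate of part~(i) a $\lambda$-parameter family of scaled data. Your argument is symmetric in the two scaling groups and avoids the WLOG reduction; it also isolates the role of the closed-graph assumption (positivity of $F$) more transparently than the paper's $\lambda\to\infty$ limiting argument.

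Two remarks. First, your positivity proof needs the observation that the norm homogeneity $\|\tau_\lambda^{X^*}u\|_{X^*} = \lambda^r\|u\|_{X^*}$ forces $\tau_\lambda^{X^*}0 = 0$ (this is automatic for the linear rescalings \eqref{eq:scalingsintro}, but worth stating explicitly since $\tau_\lambda^D$ need not be linear in the precise formulation). Second, the paper's precise Theorem~\ref{thm:linearproblems} proves the strictly stronger conclusion that $T(M\mathbb{B}_{X^*})$ is \emph{nowhere dense} for every $M$ under the incompatibility hypothesis, which is what feeds into the genericity corollaries for the Euler and Navier--Stokes equations; your argument as written establishes only non-surjectivity. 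To recover the quantitative nowhere-density you would have to rerun your estimate not from global surjectivity but from the local version (a ball $\bar B_r(f_0)\subset T(M\mathbb{B}_{X^*})$), essentially rejoining the paper's line of reasoning at that point.
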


The first part of Theorem \ref{bigthm:incompatiblescalings} yields a generalisation of Theorem \ref{thm:abstractopenmappingprinciple}. We note that the assumption that $r$ and $s$ have the same sign, i.e.\ $r s>0$, ensures that the norms in question are either \textit{subcritical} or \textit{supercritical}; the critical case $r = s = 0$ is beyond the scope of this work. We will expand on the second part of Theorem \ref{bigthm:incompatiblescalings} in the next subsection. Solvability and a priori estimates in inhomogeneous function spaces are treated in Corollary \ref{cor:abstractopenmappingprincipleintersections}. 

Concerning the hypothesis of stability under weak$^*$ convergence, we note that it is typically satisfied by solutions above a certain regularity threshold: for instance, both the Navier--Stokes equations and the cubic wave equation in $\R^3\times [0, +\infty)$ are preserved under weak$^*$ convergence in the corresponding energy spaces. For further details, see Section \ref{sec:Examples}. Moreover,  by considering a relaxed version of the PDE the assumption of stability under weak$^*$ convergence can sometimes be bypassed, an idea
which will be briefly discussed in the next subsection.

\subsection{Applications to the equations of incompressible fluid flow}
\label{sec:applicationstofluids}

The validity of the assumptions of Theorem \ref{bigthm:incompatiblescalings} is typically easy to check in practice. As such, the theorem gives rather immediate consequences on various physical PDEs. For further discussion of the merits and weaknesses of Theorem \ref{bigthm:incompatiblescalings}, see Section \ref{sec:Concluding discussion}.

In order to give a representative application of Theorem \ref{bigthm:incompatiblescalings} we consider energy-dissipating solutions of the incompressible Euler equations
\begin{align}
& \partial_t u + u \cdot \nabla u - \nabla P = 0, \label{eq:Euler1} \\
& \tp{div} \, u = 0, \label{eq:Euler2} \\
& u(\cdot,0) = u^0 \label{eq:Euler3}
\end{align}
in $\R^n \times [0,\infty)$, for $n \ge 2$. Note that \eqref{eq:Euler1}--\eqref{eq:Euler3} are invariant under  scalings of the form
\begin{equation} \label{eq:Eulerscalings}
u_\lambda(x,t) \equiv \frac{1}{\lambda^\alpha} u \left( \frac{x}{\lambda^\beta},\frac{t}{\lambda^{\alpha+\beta}} \right), \; u^0_\lambda(x,t) \equiv \frac{1}{\lambda^\alpha} u^0\left( \frac{x}{\lambda^\beta} \right), \; P_\lambda(x,t) \equiv \frac{1}{\lambda^{2\alpha}} P\left( \frac{x}{\lambda^\beta},\frac{t}{\lambda^{\alpha+\beta}} \right)
\end{equation}
for any $\a,\beta>0$.
An interesting connection between \eqref{eq:Euler1}--\eqref{eq:Euler2} and \eqref{eq:jac} is detailed in \cite{Muller1999d}.

Before proceeding further, we note that the incompressibility constraint \eqref{eq:Euler2} will often be codified in the appropriate function spaces through the subscript $\sigma$; thus, for instance, we write $L^2_\sigma \equiv \{v \in L^2 : \tp{div} \, v = 0\}$.

Solutions of \eqref{eq:Euler1}--\eqref{eq:Euler3} which fail to conserve energy have been studied extensively in relation to the so-called \emph{Onsager conjecture}, see \cite{Buckmaster2019,DeLellis2009,Isett2018} and the references therein. By a theorem of \textsc{Sz\'{e}kelyhidi} and \textsc{Wiedemann} \cite{Szekelyhidi2012}, for a dense set of initial data in $L^2_\sigma$ there exist infinitely many \emph{admissible} solutions $u \in L^\infty_t L^2_{\sigma,x}$ of \eqref{eq:Euler1}--\eqref{eq:Euler3}, that is, weak solutions which satisfy the energy inequality 
$$\int_{\R^n} |u(x,t)|^2 \d x \le \int_{\R^n} |u^0(x)|^2 \d x\qquad \tp{ for all }t \ge 0.$$ 
Moreover, the data $u^0$ can be chosen to be $C^\beta$ regular for any given $\beta \in (0,1/3)$, at least on the torus~\cite{Daneri2020}. 

For some data $u^0 \in L^2_\sigma$ there are even admissible compactly supported solutions. \textsc{Scheffer} had already constructed in \cite{Scheffer1993} solutions of the Euler equations which are compactly supported and square integrable in space-time, and a systematic study via convex integration was initiated by \textsc{De Lellis} and \textsc{Sz\'{e}kelyhidi} in the groundbreaking works~\cite{DeLellis2009,DeLellis2010}. Nevertheless, Theorem \ref{bigthm:incompatiblescalings} easily implies that, for a Baire-generic datum $u^0 \in L^2_\sigma$, the kinetic energy $\frac 1 2 \int_{\R^n} |u(x,t)|^2 \d x$ of weak solutions cannot undergo an $L^q$-type decay, $q < \infty$:

\begin{bigcor} \label{thm:Eulertheorem}
Take $n \ge 2$ and $2 < p < \infty$.
For every $M > 0$, the set of initial data for which there is a solution $u \in M \mathbb{B}_{L^p_t L^2_{x}}$ of the Cauchy problem \eqref{eq:Euler1}--\eqref{eq:Euler3} is nowhere dense in $L^2_\sigma$.
In particular, for a residual $G_\delta$ set of initial data in $L^2_\sigma$, the Cauchy problem \eqref{eq:Euler1}--\eqref{eq:Euler3} has no solution in $L^\infty_t L^2_x \cap [\bigcup_{2<p<\infty} L^p_t L^2_{x}]$.
\end{bigcor}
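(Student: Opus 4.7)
The plan is to apply the incompatible-scalings half of Theorem \ref{bigthm:incompatiblescalings} to the solution-to-datum operator $T\colon u\mapsto u(\cdot,0)$ for the Euler equations \eqref{eq:Euler1}--\eqref{eq:Euler3}, with $X^*=L^p_tL^2_{x,\sigma}(\R^n\times[0,\infty))$ and $Y^*=L^2_\sigma(\R^n)$, both reflexive since $2<p<\infty$. Spatial translation invariance supplies the required isometric family $\sigma_k$ together with the weak$^*$-to-zero property. The main preparatory step, and in my view the principal technical obstacle, is arranging that the relevant solution class is weak$^*$ sequentially stable in $X^*$; strict weak solutions are unstable under weak$^*$ limits in $L^p_tL^2_x$ because of oscillations and concentrations in the nonlinear term $u\otimes u$, and the standard workaround is to pass to a relaxed notion (measure-valued or dissipative weak solutions, as alluded to at the end of Section \ref{sec:generalisedopenmap}), for which a defect measure absorbs any loss of closedness.

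The scaling exponents are explicit: under \eqref{eq:Eulerscalings} with $(\alpha,\beta)$ satisfying $2\alpha<n\beta$, one has
\[
\|u^0_\lambda\|_{L^2_\sigma}=\lambda^{r(\alpha,\beta)}\|u^0\|_{L^2_\sigma},\qquad
\|u_\lambda\|_{L^p_tL^2_x}=\lambda^{s(\alpha,\beta)}\|u\|_{L^p_tL^2_x},
\]
with $r(\alpha,\beta)=\tfrac{n\beta-2\alpha}{2}>0$ and $s(\alpha,\beta)=r(\alpha,\beta)+\tfrac{\alpha+\beta}{p}>0$, and hence
\[
\frac{s(\alpha,\beta)}{r(\alpha,\beta)}=1+\frac{2(\alpha+\beta)}{p(n\beta-2\alpha)}
\]
varies nontrivially with the ratio $\alpha/\beta$. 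Thus any two non-proportional admissible pairs furnish incompatible scalings in the sense of Theorem \ref{bigthm:incompatiblescalings}, and its second part rules out surjectivity of $T$.

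To upgrade non-surjectivity to nowhere denseness of $A_M\equiv T(M\mathbb{B}_{X^*}\cap\tp{Sol})$ in $L^2_\sigma$, I argue by contradiction. Reflexivity of $X^*$ and weak$^*$ sequential stability of the solution class imply that $A_M$ is weakly, hence strongly, closed in $L^2_\sigma$, so if it were not nowhere dense it would contain a norm ball $B(f_0,\delta)\subset L^2_\sigma$. Given any $f\in L^2_\sigma$ with $\|f\|<\delta$, the inclusion $f_0+\sigma_k f\in B(f_0,\delta)\subset A_M$ yields $u_k\in M\mathbb{B}_{X^*}$ with $Tu_k=f_0+\sigma_k f$; applying the commuting isometry $\sigma_{-k}^{X^*}$ gives $T(\sigma_{-k}^{X^*}u_k)=\sigma_{-k}^{Y^*}f_0+f$, and a weak$^*$ subsequential limit as $k\to\infty$, together with $\sigma_{-k}^{Y^*}f_0\overset{*}{\rightharpoonup}0$ and the weak$^*$ continuity of $T$, produces $u\in M\mathbb{B}_{X^*}$ with $Tu=f$. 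Thus $B(0,\delta)\subset A_M$, and a single scaling from \eqref{eq:Eulerscalings} then dilates this ball to cover all of $L^2_\sigma$, yielding surjectivity of $T$, contradicting the previous paragraph. The residual-$G_\delta$ assertion follows since the exceptional set is contained in the countable union $\bigcup_{M\in\N,\,p\in\Q\cap(2,\infty)}\overline{A_M^{(p)}}$ of closed nowhere-dense sets, a meagre $F_\sigma$ whose complement is a residual $G_\delta$ by Baire.
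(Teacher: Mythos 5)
Your proposal captures the two essential ideas that the paper uses — a relaxation to recover weak$^*$ stability of the solution-to-datum map, and the incompatibility of scaling exponents — and the scaling computations $r(\alpha,\beta)=(n\beta-2\alpha)/2$, $s(\alpha,\beta)=r+(\alpha+\beta)/p$, with $s/r$ varying nontrivially in $\alpha/\beta$, are correct. However, your choice of scalings differs from the paper's: you propose using two distinct \emph{Euler} scalings from \eqref{eq:Eulerscalings}, whereas the paper (in the proof of Lemma \ref{lemma:Eulerlemma}) uses one Euler-type scaling together with the \emph{multiplicative} scaling $\tilde\tau_\lambda=\lambda\,\mathrm{id}$ — which is a symmetry of the relaxed problem precisely because that problem is linear, but is \emph{not} a symmetry of the nonlinear Euler system. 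Your alternative is legitimate (after checking that both ratios $\tilde s/\tilde r_i>s/r_i$, including the exponent for the stress component, can be achieved — this works since the $S$-exponent is exactly twice the $u$-exponent) and arguably more intrinsic, since it only invokes genuine Euler symmetries.

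Where the proposal is imprecise is in the treatment of the relaxation and the ambient space $X^*$. The relaxation the paper uses is not a measure-valued or dissipative formulation but a crude \emph{linear} one: the quadratic term $u\otimes u$ is replaced by a free stress $S\in L^{p/2}_t\mathbf{M}_x$, so the domain $D$ consists of pairs $(u,S)$ in $X^*=L^p_tL^2_{\sigma,x}\oplus L^{p/2}_t\mathbf{M}_x$. With $X^*$ chosen as you state, namely $L^p_tL^2_{\sigma,x}$ alone, the closed-graph hypothesis $(\widehat{\mathrm A1})$ fails and the argument does not get off the ground; once $S$ is added, the domain of $T$ must be re-specified as a set of pairs, and the ball $M\mathbb B_{X^*}$ must control $S$ as well (which it does, because a weak Euler solution with $\|u\|_{L^p_tL^2}\le M$ forces $\|u\otimes u\|_{L^{p/2}_tL^1}\le M^2$). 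Moreover, this enlarged $X^*$ is \emph{not} reflexive — $\mathbf{M}_x=(C_0)^*$ — so your appeal to ``reflexivity of $X^*$'' in the final paragraph is off target; the relevant property is weak$^*$ sequential compactness of the closed unit ball, which holds because $X^*$ is the dual of a separable space. Finally, the last paragraph essentially reproves the contradiction mechanism of Theorem \ref{thm:linearproblems} rather than citing it, and the residual-$G_\delta$ claim requires an interpolation step (if $u\in L^\infty_tL^2_x\cap L^{p_0}_tL^2_x$ then $u\in L^q_tL^2_x$ for all rational $q\ge p_0$) that you leave implicit, though the paper flags it explicitly.
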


\begin{bigcor} \label{bigcor:Eulercorollary}
Take $\tau > 0$. An admissible solution $u \in L^\infty_t L^2_{\sigma,x}$ with $\tp{supp}(u) \subset \R^n \times [0,\tau]$ exists only for a nowhere dense set of data $u^0 \in L^2_\sigma$.
\end{bigcor}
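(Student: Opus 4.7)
The plan is to derive Corollary \ref{bigcor:Eulercorollary} directly from Corollary \ref{thm:Eulertheorem} by combining the energy inequality with the compact-in-time support condition, and then use a bounded-ball decomposition to upgrade meagre-type bounds to nowhere-density. Let $S_\tau \subset L^2_\sigma$ denote the set of initial data admitting an admissible weak solution $u \in L^\infty_t L^2_{\sigma,x}$ of \eqref{eq:Euler1}--\eqref{eq:Euler3} with $\tp{supp}(u) \subset \R^n \times [0,\tau]$. The goal is to show that $\overline{S_\tau}$ has empty interior in $L^2_\sigma$.

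First I would fix once and for all some exponent $p \in (2,\infty)$ (the choice is immaterial) and record the following elementary estimate: whenever $u^0 \in S_\tau$ and $u$ is any associated admissible solution supported in $\R^n \times [0,\tau]$, the energy inequality gives $\|u(\cdot,t)\|_{L^2_x} \le \|u^0\|_{L^2}$ for every $t \ge 0$, and integrating over $[0,\tau]$ yields
\[
\|u\|_{L^p_t L^2_x} \;\le\; \tau^{1/p} \|u^0\|_{L^2}.
\]
Consequently, if $D_M$ denotes the set of initial data which admit a solution lying in $M \mathbb{B}_{L^p_t L^2_x}$ (the set that Corollary \ref{thm:Eulertheorem} declares nowhere dense in $L^2_\sigma$), then for every $N > 0$ one has the inclusion
\[
S_\tau \cap N \mathbb{B}_{L^2_\sigma} \;\subseteq\; D_{\tau^{1/p} N}.
\]

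Second I would argue by contradiction. Suppose that $\overline{S_\tau}$ contains some open ball $B(v,r)$ in $L^2_\sigma$. Pick any $N > \|v\|_{L^2} + r$, so that $B(v,r) \subset N \mathbb{B}_{L^2_\sigma}$. For each $w \in B(v,r)$, take a sequence $u_k^0 \in S_\tau$ with $u_k^0 \to w$ in $L^2_\sigma$; then $\|u_k^0\|_{L^2} < N$ for all large $k$, hence $u_k^0 \in D_{\tau^{1/p}N}$ by the inclusion above, and therefore $w \in \overline{D_{\tau^{1/p}N}}$. This shows $B(v,r) \subset \overline{D_{\tau^{1/p}N}}$, contradicting the nowhere-density of $D_{\tau^{1/p}N}$ granted by Corollary \ref{thm:Eulertheorem}. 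Hence $\overline{S_\tau}$ has empty interior, as claimed.

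There is no serious obstacle in this argument: all the work has been concentrated in Corollary \ref{thm:Eulertheorem}, whose proof via the abstract Theorem \ref{bigthm:incompatiblescalings} already absorbs the weak$^*$-stability of the Euler system in the energy class. The only subtle point worth double-checking is the use of a \emph{single} radius $N$ uniformly over the ball $B(v,r)$, which is precisely why the passage from the meagre set $\bigcup_N S_{\tau,N}$ to a nowhere dense set goes through; this is enabled by the fact that $B(v,r)$ is itself bounded in $L^2_\sigma$.
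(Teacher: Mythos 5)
Your proposal is correct and takes essentially the approach the paper implies: Corollary \ref{bigcor:Eulercorollary} is an immediate consequence of Corollary \ref{thm:Eulertheorem} once one observes that the energy inequality (which is exactly where admissibility is used) combined with compact time support gives $\Vert u \Vert_{L^p_t L^2_x} \le \tau^{1/p} \Vert u^0 \Vert_{L^2}$, and the passage from ``nowhere dense for each $M$'' to ``nowhere dense'' goes through because open balls in $L^2_\sigma$ are norm-bounded.
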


To deduce Corollary \ref{thm:Eulertheorem} from Theorem \ref{bigthm:incompatiblescalings} we consider a \textit{linear} relaxation of the equations \eqref{eq:Euler1}--\eqref{eq:Euler3}; this is an idea in the spirit of \textsc{Tartar}'s framework for studying oscillations and concentrations in conservation laws \cite{Tartar1979, Tartar1983}. Such a relaxation is used here in order to render the associated solution-to-datum operator weak$^*$-to-weak$^*$ continuous. Corollary \ref{thm:Eulertheorem} is proved in \textsection \ref{sec:The incompressible Euler equations}.

The proof of Corollary \ref{thm:Eulertheorem} also applies to many other models in fluid dynamics. For instance, concerning the Navier--Stokes equations, we prove in an elementary fashion upper bounds for the generic energy dissipation rate of weak solutions. Another example is given by the equations of ideal magnetohydrodynamics, for which the analogue of Corollary \ref{thm:Eulertheorem} holds true. In that context bounded solutions with compact support in space-time were constructed in \cite{Faraco2020b}. On the torus $\T^3$, solutions in $L^\infty_t H^\beta_x$, for a small $\beta > 0$, violating magnetic helicity conservation were constructed in \cite{Beekie2020}.

\section{A nonlinear open mapping principle for positively homogeneous operators}
\label{sec:openmaps}

The main goal of this section is to prove Theorem \ref{thm:abstractopenmappingprinciple}. A related nonlinear uniform boundedness principle is proved in Proposition \ref{prop:PUB} and a precise statement concerning atomic decompositions in terms of $T$ is proved in Proposition \ref{prop:trichotomy}.

In the case of the Jacobian, by adapting a standard proof of the standard Open Mapping Theorem to Question \ref{qu:Lp} one obtains the following statement: if $\J(\dot{W}^{1,np}(\R^n,\R^n)) = \mathscr{H}^p(\R^n)$, then for every $f \in \mathscr{H}^p(\R^n)$ there exist $u,v \in \dot{W}^{1,np}(\R^n,\R^n)$ with 
\begin{equation} \label{eq:twoJacobians}
\tp{J} u + \tp{J} v = f\qquad \tp{ and } \qquad \int_{\R^n} (|\D u|^{np} + |\D v|^{np}) \d x \le C \Vert f \Vert_{\mathscr{H}^p}^p.
\end{equation}
Thus, quantitative control is gained at the expense of introducing an extra term $\tp{J} v$.

One could attempt to show the non-surjectivity of $\tp{J}$ by disproving the \emph{a priori} estimate in \eqref{eq:twoJacobians}. However, the extra term $\tp{J} v$ makes this a formidable task since the equation $\tp{J} u + \tp{J} v = f$ admits much more pathological solutions than $\tp{J} u = f$. As a prototypical example, there exist Lipschitz maps $u,v \colon \R^2 \to \R^2$ vanishing in the lower half-plane and satisfying $\tp{J} u + \tp{J} v = 1$ in the upper half-plane~\cite[Lemma 5]{Iwaniec2002}. In Theorem \ref{thm:abstractopenmappingprinciple} and Corollary \ref{cor:openmapJacobian}, the extra Jacobian $\tp{J} v$ is removed, leading to a genuinely nonlinear version of the Open Mapping Theorem.

\subsection{The proof of Theorem \ref{thm:abstractopenmappingprinciple}}
Here we give a slightly more precise version of Theorem  \ref{thm:abstractopenmappingprinciple}:

\begin{thm}
Let $X$ and  $Y$ be Banach spaces such that $\mb B_{X^*}$ is sequentially weak$^*$ compact. We make the following assumptions:

\begin{enumerate}[label={\normalfont (A\arabic*)}]
\item \label{it:weakcontinuity} $T\colon X^*\to Y^*$ is a weak$^*$-to-weak$^*$ sequentially continuous operator.

\item \label{it:homogeneity} $T(a u) = a^s T(u)$ for all $a > 0$ and $u \in X^*$, where $s > 0$.

\item \label{it:isometries} For $k\in \N$ there are isometric isomorphisms $\sigma_k^{X^*}\colon X^*\to X^*$, $\sigma_k^{Y^*}\colon Y^*\to Y^*$ such that 
\begin{align*}
 T \circ \sigma_k^{X^*} = \sigma_k^{Y^*} \circ T \qquad \text{for all } k \in \N, \qquad
\sigma_k^{Y^*} f \overset{*}{\rightharpoonup} 0 \qquad \text{for all } f \in Y^*.
\end{align*}
\end{enumerate}

\noindent Then the following conditions are equivalent:
\begin{enumerate}
\item\label{it:nonmeagre} $T(X^*)$ is non-meagre in $Y^*$.

\item\label{it:surjectivity} $T(X^*) = Y^*$.

\item\label{it:openat0} $T$ is open at the origin.

\item\label{it:quantitativesolvability} For every $f \in Y^*$ there exists $u \in X^*$ such that
\begin{equation} \label{eq:quantitative solvability}
T u = f, \qquad \Vert u \Vert_{X^*}^s \le C \Vert f \Vert_{Y^*}.\end{equation}
\end{enumerate}
\end{thm}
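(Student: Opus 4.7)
The plan is to cycle through (iv) $\Rightarrow$ (iii) $\Rightarrow$ (ii) $\Rightarrow$ (i) $\Rightarrow$ (iv); the first three implications are formal consequences of the structural assumptions, while the last is the real content.

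For (iv) $\Rightarrow$ (iii), the quantitative bound says exactly that a small norm-ball in $Y^*$ lies in $T$ of a small norm-ball in $X^*$, so $T$ is open at the origin. For (iii) $\Rightarrow$ (ii), openness at $0$ gives $B_\delta(0) \subset T(B_R(0))$; since (A2) forces $T(0)=0$, any $f \in Y^*$ can be rescaled to $\lambda^s f \in B_\delta(0)$ for sufficiently small $\lambda$, whose preimage rescales back to a preimage of $f$. The implication (ii) $\Rightarrow$ (i) is the Baire category theorem applied to $Y^*$.

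The real work is (i) $\Rightarrow$ (iv), which I would carry out in three stages. First, using (A2) decompose $T(X^*) = \bigcup_{k \in \N} T(k \mb B_{X^*})$, and verify that each $T(k \mb B_{X^*})$ is norm closed in $Y^*$: if $T u_n \to g$ in norm with $\|u_n\|_{X^*} \leq k$, extract by sequential weak$^*$ compactness a subsequence $u_{n_j} \wstar u$ with $\|u\|_{X^*} \leq k$, and invoke (A1) to conclude $T u_{n_j} \wstar T u = g$. Together with (i) and the Baire category theorem, this yields $k \in \N$, $f_0 \in Y^*$, and $r > 0$ with $B_r(f_0) \subset T(k \mb B_{X^*})$. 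Next, translate the ball to the origin via (A3): for every $m \in \N$, the isometry and intertwining properties give $B_r(\sigma_m^{Y^*} f_0) \subset T(k \mb B_{X^*})$. Fix $g \in Y^*$ with $\|g\|_{Y^*} < r$ and pick $v_m \in k\mb B_{X^*}$ with $T v_m = \sigma_m^{Y^*} f_0 + g$. A weak$^*$ subsequential limit $v_{m_j} \wstar v$ satisfies $\|v\|_{X^*} \leq k$, and passing to the limit in both sides using (A1) and $\sigma_m^{Y^*} f_0 \wstar 0$ yields $T v = g$. Finally, upgrade this local statement into (iv) by homogeneity: for arbitrary nonzero $f \in Y^*$, set $\lambda^s = r/(2 \|f\|_{Y^*})$ and solve $T v = \lambda^s f$ with $\|v\|_{X^*} \leq k$ as just above; then $u = v/\lambda$ satisfies $T u = f$ and $\|u\|_{X^*}^s \leq (2 k^s/r) \|f\|_{Y^*}$.

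The main obstacle is the translation step, because the Baire argument only locates an interior ball somewhere in $Y^*$, and a nonlinear $T$ has no built-in mechanism to slide such a ball to the origin. The point of (A3) is exactly this: the maps $\sigma_m^{Y^*}$ send the centre $f_0$ weakly$^*$ to $0$ while preserving the preimage bound $k$ on $X^*$, and the weak$^*$ sequential continuity of $T$ cashes in that oscillation to make $f_0$ disappear in the limit. Without such an intertwining structure -- as Horowitz's finite-dimensional example in the introduction illustrates -- the step genuinely fails, so (A3) is not merely a technical convenience.
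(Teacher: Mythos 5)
Your proof is correct and follows the same overall strategy as the paper's: Baire category to locate a ball in the image, the translation-plus-weak$^*$-limit argument driven by (A3) and (A1) to shift that ball to the origin, and then positive homogeneity (A2) to scale. The one structural difference is the decomposition: you write $T(X^*) = \bigcup_{k} T(k\,\mb B_{X^*})$, whereas the paper uses $K_\ell \equiv \{f \in Y^* : \exists\, u,\; Tu = f,\; \Vert u\Vert_{X^*}^s \le \ell \Vert f\Vert_{Y^*}\}$, which bakes the quantitative estimate directly into the Baire step. Your route is marginally more elementary and transparent, but it then requires the explicit rescaling $u = v/\lambda$ at the end, which the paper sidesteps because a ball $\bar B_r(f_0) \subset K_\ell$ already carries the estimate; both the weak$^*$-lower-semicontinuity check (your closedness of $T(k\,\mb B_{X^*})$, the paper's closedness of $K_\ell$) and the translation-and-limit step are otherwise identical in spirit. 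Two small remarks: the decomposition $T(X^*) = \bigcup_k T(k\,\mb B_{X^*})$ does not actually use (A2) (it is just $X^* = \bigcup_k k\,\mb B_{X^*}$), and you should dispose of $f = 0$ separately, where $u = 0$ works since (A2) forces $T(0)=0$.
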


A sufficient condition for $\mb B_{X^*}$ to be sequentially weak$^*$ compact is that $X$ is a \textit{weak Asplund space} \cite[Theorem 3.5]{Stegall1981}. For instance, reflexive or separable spaces are weak Asplund \cite{Deville1993}.

\begin{proof}[Proof of Theorem \ref{thm:abstractopenmappingprinciple}]
We have \ref{it:quantitativesolvability} $\Rightarrow$ \ref{it:openat0} $\Rightarrow$ \ref{it:surjectivity} $\Rightarrow$ \ref{it:nonmeagre} and so we just prove \ref{it:nonmeagre} $\Rightarrow$ \ref{it:quantitativesolvability}.

Assume that \ref{it:nonmeagre} holds. We may write $T(X^*)$ as a union $\cup_{\ell=1}^\infty K_\ell$, where
\[K_\ell \equiv \big\{f \in Y^* : \text{there exists } u \in X^* \text{ with } T u = f \text{ and } \Vert u \Vert_{X^*}^s \le \ell \Vert f \Vert_{Y^*}\big\}.\]
Since balls in $X^*$ are sequentially weak$^*$ compact, by \ref{it:weakcontinuity}, the sets $K_\ell$ are norm-closed. Now, by the Baire Category Theorem, some $K_\ell$ contains a closed ball $\bar{B}_r(f_0)$.

Our aim is to solve \eqref{eq:quantitative solvability} whenever $\Vert f \Vert_{Y^*} = r$; assumption \ref{it:homogeneity} then implies the claim. Suppose, therefore, that $\Vert f \Vert_{Y^*} = r$. For every $k \in \N$ we have $f_0 + (\sigma_k^{Y^*})^{-1} f \in \bar{B}_r(f_0)$. Hence, we may choose $u_k \in X^*$ such that $T u_k = f_0 + (\sigma_k^{Y^*})^{-1} f$ and
\[\Vert \sigma^{X^*}_k u_k \Vert_{X^*}^s =  \Vert u_k \Vert_{X^*}^s \le \ell \Vert f_0 + (\sigma_k^{Y^*})^{-1} f \Vert_{Y^*} \le \ell (\Vert f_0 \Vert_{Y^*} + r).\]
Since balls in $X^*$ are sequentially weak$^*$ compact, after passing to a subsequence if need be, $\sigma_k^{X^*} u_k$ converges weakly$^*$ to some $u \in X^*$, so that $T(\sigma_k^{X^*} u_k) \overset{*}{\rightharpoonup} T u$. By the lower semicontinuity of the norm we have $$\Vert u \Vert_{X^*}^s \le \liminf_{k \to \infty} \Vert \sigma_k^{X^*} u_k \Vert_{X^*}^s \le \ell (\Vert f_0 \Vert_{Y^*} + r).$$
On the other hand, \ref{it:isometries} gives
\[T (\sigma_k^{X^*} u_k) = \sigma_k^{Y^*} (T u_k) = \sigma_k^{Y^*} f_0 + f \overset{*}{\rightharpoonup} f,\]
so that, by \ref{it:weakcontinuity}, $T u = f$. Thus $u$ solves \eqref{eq:quantitative solvability} and the proof is complete.
\end{proof}

The theory of Compensated Compactness provides many examples of nonlinear operators to which Theorem \ref{thm:abstractopenmappingprinciple} applies. Here we give a general formulation in the spirit of \cite{Guerra2019}, see also \cite{Murat1981,Tartar1979}, which we then illustrate with more concrete examples.

\begin{ex}
Let $\mc A$ be an $l$-th order homogeneous linear operator, which for simplicity we assume to have constant coefficients; that is, for $v\in C^\infty(\R^n,\mb V)$,
$$\mc A v=\sum_{|\a|=l} A_\a \p^\a v, \qquad A_\a \in \tp{Lin}(\mb V,\mb W),$$
where $\mb V, \mb W$ are finite-dimensional vector spaces. For $p\in [1,+\infty)$ and $s\in \N$, $s\geq 2$, take 
$$X^*=L^{ps}_{\mc A}(\R^n,\mb V), \qquad Y^* = \mathscr H^p(\R^n).$$ Here $L^{ps}_{\mc A}(\R^n,\mb V)$ is the space of those $v\in L^{ps}(\R^n,\mb V)$ such that $\mc A v=0$ in the sense of distributions.
We will further need the following standard non-degeneracy assumption:
\begin{equation}
\label{eq:constantrank}
\tp{the symbol of } \mc A, \tp{ seen as a matrix-valued polynomial, has constant rank.}
\end{equation}
Whenever \eqref{eq:constantrank} holds, we say that $\mc A$ has \textit{constant rank}. We will not discuss this assumption here but it holds in all of the examples below; the reader may find other characterizations of constant rank operators in \cite{GuerraRaita2020, Raita2018}.

Let $T\colon X^*\to Y^*$ be a homogeneous sequentially weakly continuous operator. Under the assumption \eqref{eq:constantrank}, such operators were completely characterised in \cite{Guerra2019}, and they are often called \textit{Compensated Compactness quantities}. They can be realised as certain constant-coefficient partial differential operators and so they necessarily satisfy \ref{it:isometries} if one takes the isometries $\sigma_k^{X^*},\sigma_k^{Y^*}$ to be translations. The following are standard examples of such operators:
\begin{enumerate}
\item $\mc A=\tp{curl}$ and $T=\J$. For this example, take $\mb V=\R^{n\times n}$ and choose $\mc A$ in such a way that $\mc A v =0$ if and only if $v=\D u$, for some $u\colon \R^n\to \R^n$. For instance, we may take $(\tp{curl}\, v)_{ijk}=\p_k v_{ij}-\p_j v_{ik}$. We also choose $s=n$ and so $X^*={\dot W}^{1,np}(\R^n,\R^n)$. The only positively $n$-homogeneous sequentially weakly continuous operator $X^*\to Y^*$ is the Jacobian, and in particular we recover Corollary \ref{cor:openmapJacobian}.
\item $\mc A=\tp{curl}^2$ and $T=\tp{H}$. Here $\mc A$ is chosen similarly to the previous example, but now $\mc A v=0$ if and only if $v=\D^2 u$, for some $u\colon \R^n\to \R$. Again we take $s=n$ and so $X^*={\dot W}^{2,np}(\R^n,\R^n)$. We may take $T=\tp{H}\colon X^*\to Y^*$ to be the Hessian, and Theorem \ref{thm:abstractopenmappingprinciple} shows that it satisfies the open mapping principle.
\end{enumerate}
The two previous examples admit a straightforward generalisation, where one considers $s$-th order minors (instead of the determinant) and a $j$-th order curl (instead of $j=1,2$).
\begin{enumerate}[resume]
\item $\mc A=(\tp{div},\,\tp{curl})$ and $T=\langle\cdot, \cdot\rangle$. In this example, $s=2$ and $T$ is the standard inner product acting on a pair $v\equiv (B,E)\colon \R^n\to \R^n\times \R^n$; here, $B$ is thought of as a ``magnetic field'' and $E$ as an ``electric field''. As before, Theorem \ref{thm:abstractopenmappingprinciple} shows that $T$ satisfies the open mapping principle.
\end{enumerate}
\end{ex}

We conclude this subsection by comparing the above example with \cite{Coifman1993}. There, the authors address the problem of deciding whether Compensated Compactness quantities are surjective, particularly when $p=1$. Thus Theorem \ref{thm:abstractopenmappingprinciple} can be read as saying that openness at zero is a necessary condition for a positive answer to this problem.

\subsection{A nonlinear uniform boundedness principle}

We also present a nonlinear version of the Uniform Boundedness Principle in the spirit of Theorem \ref{thm:abstractopenmappingprinciple}; under certain structural conditions, a family of operators which is pointwise bounded in a neighbourhood of the origin is uniformly bounded in a (possibly smaller) neighbourhood of the origin.

\begin{prop} \label{prop:PUB}
Let $X$ and $Z$ be Banach spaces and let $I$ be an index set. Suppose the following conditions hold:

\begin{enumerate}
\item\label{it:wslsc} For every $i \in I$, the mapping $T_i \colon X \to Z$ is such that $u \mapsto \Vert T_i u \Vert_Z \colon X \to \R$ is weakly sequentially lower semicontinuous.

\item\label{it:pointwisebdd} There is $\e>0$ such that $\sup_{i \in I} \Vert T_i(u) \Vert_Z < \infty$ whenever $\Vert u \Vert_X \le \e$.

\item\label{it:isometriesuniformboundedness} For $j\in \N$ there are isometric isomorphisms $\sigma_k^X\colon X\to X$ and $\sigma_k^Z\colon Z\to Z$ such that
\begin{align*}
& T_i \circ \sigma_k^X = \sigma_k^Z \circ T_i \qquad \text{for all } i \in I \text{ and } k \in \N, \\
&\sigma_k^X u \rightharpoonup 0 \qquad \text{for all } u \in X.
\end{align*}
\end{enumerate}
Then there exists $\delta > 0$ such that
\[\sup_{\Vert u \Vert_X \le \delta} \sup_{i \in I} \Vert T_i u \Vert_Z < \infty.\]
\end{prop}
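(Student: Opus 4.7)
The strategy is to imitate the classical proof of the Uniform Boundedness Principle via Baire category, which naturally produces a ball on which the family is uniformly bounded but centred at some generic point $u_0 \in X$; the assumption \ref{it:isometriesuniformboundedness} will then be used, as in the proof of Theorem \ref{thm:abstractopenmappingprinciple}, to shift this ball to the origin.

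First I would apply Baire's theorem to the complete metric space $\bar{\mb B}_\varepsilon(0)$ (a closed subset of the Banach space $X$). Setting
\[F_n \equiv \{u \in \bar{\mb B}_\varepsilon(0) : \sup_{i \in I} \Vert T_i u \Vert_Z \le n\},\]
hypothesis \ref{it:wslsc} (together with the elementary fact that weakly sequentially lower semicontinuous functions are norm lower semicontinuous) shows that each $F_n$ is norm-closed in $\bar{\mb B}_\varepsilon(0)$, while \ref{it:pointwisebdd} yields $\bar{\mb B}_\varepsilon(0) = \bigcup_{n=1}^\infty F_n$. Baire's theorem then produces some $F_n$ with non-empty relative interior. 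Since the open ball $\mb B_\varepsilon(0)$ is dense in $\bar{\mb B}_\varepsilon(0)$, this relative interior must meet $\mb B_\varepsilon(0)$, so we may fix $u_0 \in \mb B_\varepsilon(0)$ and $r > 0$ with $\bar{\mb B}_r(u_0) \subseteq F_n$.

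The main new point is to bootstrap this to a bound on the full ball $\bar{\mb B}_r(0)$. Fix $v \in X$ with $\Vert v \Vert_X \le r$ and, for each $k \in \N$, set
\[u_k \equiv u_0 + (\sigma_k^X)^{-1} v, \qquad v_k \equiv \sigma_k^X u_0 + v.\]
Since $\sigma_k^X$ (and hence $(\sigma_k^X)^{-1}$) is isometric, $\Vert u_k - u_0 \Vert_X = \Vert v \Vert_X \le r$, so $u_k \in \bar{\mb B}_r(u_0) \subseteq F_n$ and therefore $\sup_{i \in I} \Vert T_i u_k \Vert_Z \le n$. The covariance relation in \ref{it:isometriesuniformboundedness} gives $\sigma_k^Z T_i u_k = T_i \sigma_k^X u_k = T_i v_k$, and because $\sigma_k^Z$ is isometric this yields $\Vert T_i v_k \Vert_Z = \Vert T_i u_k \Vert_Z \le n$ for every $i \in I$ and every $k$.

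The conclusion will then follow by passing to the weak limit. Since $\sigma_k^X u_0 \rightharpoonup 0$ by \ref{it:isometriesuniformboundedness}, we have $v_k \rightharpoonup v$, so the weak sequential lower semicontinuity in \ref{it:wslsc} gives
\[\Vert T_i v \Vert_Z \le \liminf_{k \to \infty} \Vert T_i v_k \Vert_Z \le n\]
for every $i \in I$, which furnishes the desired uniform bound with $\delta = r$. The only genuinely delicate point is the first step, namely verifying that the pointwise bound of \ref{it:pointwisebdd} suffices to make $\bar{\mb B}_\varepsilon(0)$ a union of closed sets---this is why the weak lower semicontinuity is imposed on the scalar function $\Vert T_i(\cdot)\Vert_Z$ rather than on $T_i$ itself; the subsequent homogenisation via the isometries is then a direct transplant of the argument from Theorem \ref{thm:abstractopenmappingprinciple}.
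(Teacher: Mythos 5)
Your proof is correct and is essentially the same argument as the paper's: Baire category applied to the closed sets $C_\ell \equiv \{u \in \e \mathbb{B}_X : \sup_{i} \Vert T_i u \Vert_Z \le \ell\}$ produces a ball $\bar{B}_\delta(u_0) \subseteq C_\ell$, and then the covariance with the isometries $\sigma_k$ together with $\sigma_k^X u_0 \rightharpoonup 0$ and weak sequential lower semicontinuity of $\Vert T_i(\cdot)\Vert_Z$ transports the bound to $\bar{B}_\delta(0)$. The minor extra care you take about the relative interior meeting the open ball is fine but not needed once one recalls that $\e\mathbb{B}_X$ is itself a complete metric space.
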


\begin{proof}
By \ref{it:pointwisebdd}, we may write $\e \mathbb{B}_X = \cup_{\ell=1}^\infty C_\ell$, where $C_\ell \equiv \{u \in \e \mathbb{B}_X \colon \sup_{i \in I} \Vert T_i u \Vert_Z \le \ell\}$ and \ref{it:wslsc} shows that each $C_\ell$ is norm closed. Thus, by the Baire Category Theorem, some $C_\ell$ contains a closed ball $\bar{B}_\delta(u_0)$.

Let now $\Vert u \Vert_X \le \delta$ and $i \in I$. By \ref{it:isometriesuniformboundedness}, we have  $u + \sigma_k^X u_0= \sigma_k^X [u_0 + (\sigma_k^X)^{-1} u]  \in \bar{B}(u_0,\delta)$ and moreover $u + \sigma_k^X u_0\rightharpoonup u$. So by \ref{it:wslsc} and again \ref{it:isometriesuniformboundedness}, we have
\begin{align*}
\Vert T_i u \Vert_Z
&\le \liminf_{k \to \infty} \Vert T_i \sigma_k^X [u_0 + (\sigma_k^X)^{-1} u] \Vert_Z
= \liminf_{k \to \infty} \Vert \sigma_k^Z T [u_0 + (\sigma_k^X)^{-1} u] \Vert_Z \le \ell.
\end{align*}
The proof is complete.
\end{proof}

We note that, in the linear case, it is possible to prove the Banach--Steinhaus Uniform Boundedness Principle without using Baire's Category Theorem: the proof relies, instead, on the so-called ``gliding hump method''. For an extension of the classical Uniform Boundedness Principle using this method, we refer the reader to  \cite{Gal1951}.

\subsection{Atomic decompositions in terms of \texorpdfstring{$T$}{}}
The main motivation behind this subsection is Theorem \ref{thm:CLMS/H}. It establishes an analogue of the atomic decomposition of $\mathscr{H}^1(\R^n)$, giving a weak factorization on $\mathscr{H}^p(\R^n)$ in the spirit of the classical work of \textsc{Coifman}, \textsc{Rochberg} and \textsc{Weiss} \cite{Coifman1976}:

\begin{thm}\label{thm:CLMS/H}
Let $p\in [1,\infty)$. 
For every $f\in \mathscr{H}^p(\R^n)$ there are functions $u_i \in \dot{W}^{1,np}(\R^n,\R^n)$ and real numbers $c_i$ such that
\begin{equation} \label{eq:WeakFactorization}
f=\sum_{i=1}^\infty c_i \J u_i, \hspace{1cm}  \Vert u_i \Vert_{\dot{W}^{1,np}(\R^n)} \leq 1, \hs \sum_{i=1}^\infty |c_i| \lesssim \Vert f \Vert_{\mathscr{H}^p(\R^n)}.
\end{equation}
In particular, $\mathscr{H}^p(\R^n)$ is the smallest Banach space containing the range $\J({\dot W}^{1,np}(\R^n,\R^n))$.
\end{thm}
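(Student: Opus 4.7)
The plan is to establish the weak factorization by combining the classical atomic decomposition of $\mathscr{H}^p(\R^n)$ with the solvability of the Jacobian equation on a ball for bounded data. The key observation is that each Hardy atom, being a bounded mean-zero function supported on a ball, can be realised as the Jacobian of a $\dot W^{1,np}$ map supported in the same ball, with a uniform norm bound forced by scale invariance.

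First I would use the atomic decomposition to write $f = \sum_{i=1}^\infty \lambda_i a_i$, with $a_i$ a $(p,\infty)$-atom ($\supp a_i \subset B_i = B(x_i, r_i)$, $\|a_i\|_{L^\infty} \leq |B_i|^{-1/p}$, $\int a_i = 0$) and $\sum_i |\lambda_i| \lesssim \|f\|_{\mathscr{H}^p}$. For $p = 1$ this is classical; for $p > 1$, one instead decomposes $f$ along its super-level sets and telescopes over scales so as to preserve the $\ell^1$ control. Next I would solve the Jacobian equation atom by atom. After translation and dilation, a generic atom reduces to a model $\tilde a$ on $B_1$ with $\|\tilde a\|_{L^\infty} \lesssim 1$ and $\int_{B_1} \tilde a = 0$; by Dacorogna--Moser (applied to a smooth regularisation, with uniform estimates, then passing to the weak$^*$ limit), there is $\tilde v \in W_0^{1,\infty}(B_1, \R^n)$ with $\J \tilde v = \tilde a$ and $\|\D \tilde v\|_{L^\infty(B_1)} \lesssim 1$. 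Rescaling by $v_i(x) := \tilde v_i((x - x_i)/r_i)$, where $\tilde v_i$ solves the analogous problem for $\tilde a_i(y) := r_i^n a_i(x_i + r_i y)$, produces $v_i \in W_0^{1, np}(B_i, \R^n) \subset \dot W^{1, np}(\R^n, \R^n)$ with $\J v_i = a_i$ and $\|\D v_i\|_{L^{np}(\R^n)}^n \lesssim 1$, uniformly in $i$; this last bound is exact because the $n$-homogeneity of $\J$ matches the critical scaling of $\dot W^{1, np}$.

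To conclude, I would set $u_i := v_i / \|v_i\|_{\dot W^{1, np}}$ and $c_i := \lambda_i \|v_i\|_{\dot W^{1, np}}^n$, so that $c_i \J u_i = \lambda_i a_i$, $\|u_i\|_{\dot W^{1, np}} = 1$, and $\sum_i |c_i| \lesssim \sum_i |\lambda_i| \lesssim \|f\|_{\mathscr{H}^p}$. Convergence of the series $\sum_i c_i \J u_i$ to $f$ in $\mathscr{H}^p$ then follows from the Coifman--Lions--Meyer--Semmes bound $\|\J u_i\|_{\mathscr{H}^p} \lesssim 1$ \cite{Coifman1993} together with the $\ell^1$ summability of the $c_i$. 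The assertion that $\mathscr{H}^p(\R^n)$ is the smallest Banach space containing $\J(\dot W^{1, np}(\R^n, \R^n))$ is immediate from this formula. The main obstacle is the scale-invariant quantitative solvability at the atomic step: for $p > 1$ one may alternatively invoke an $L^p$-version of Dacorogna--Moser, while for $p = 1$ one is forced to remain at the $L^\infty$ level, since an $L^1$ analogue would be tantamount to the open Question \ref{qu:Lp}. Even though global surjectivity of $\J$ is unresolved, atoms carry much more structure than general $\mathscr{H}^p$ data, which is what makes the atom-by-atom step tractable.
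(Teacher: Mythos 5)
The paper does not prove Theorem~\ref{thm:CLMS/H}: the text immediately after its statement attributes the $p=1$ case to \cite{Coifman1993} and the $p>1$ case to \textsc{Hyt\"onen} \cite{Hytonen2018}, noting explicitly that the latter is ``much harder''. Your attempt supplies an argument whose $p=1$ part follows the broad outline of the CLMS approach (atomic decomposition of $\mathscr{H}^1$ plus solving $\J v = a$ atom by atom with a scale-invariant $\dot W^{1,n}$ bound), though the appeal to Dacorogna--Moser is not a black box here: the classical theorem handles positive densities with the boundary condition $\phi|_{\partial B}=\tp{id}$, whereas an atom changes sign and you need $v|_{\partial B}=0$; making this precise for sign-changing, compactly supported right-hand sides is essentially the content of \cite{Ye1994,Riviere1996} and needs more than regularisation and a weak$^*$ limit.

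For $p>1$ the argument has a genuine gap at the very first step. There is no atomic decomposition of $L^p = \mathscr{H}^p$ with $\ell^1$-summable coefficients against $(p,\infty)$-atoms, and ``telescoping over scales'' does not repair this. A dimensional count shows the super-level-set decomposition you propose fails: if $|\{2^k<|f|\le 2^{k+1}\}|=2^{-kp}$ for $k=1,\dots,N$, then $\|f\|_{L^p}\approx N^{1/p}$, while the natural coefficients satisfy $\sum_k|\lambda_k|\approx\sum_{k=1}^N 2^k\cdot(2^{-kp})^{1/p}=N$, so $\sum_k|\lambda_k|/\|f\|_{L^p}\approx N^{1-1/p}\to\infty$ for $p>1$. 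This obstruction is precisely why the $p>1$ case of Theorem~\ref{thm:CLMS/H} remained open for a quarter century after CLMS; Hyt\"onen's proof bypasses atoms entirely and proceeds via commutator estimates together with lower bounds on median oscillation. An atom-by-atom argument of the kind you outline cannot recover the $p>1$ case as written.
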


Theorem \ref{thm:CLMS/H} was proved in \cite{Coifman1993} for $p=1$, while the case $p>1$ is much harder and was established only recently by \textsc{Hyt\" onen} in \cite{Hytonen2018}. 
It is conceivable that the operator $\tp{J} \colon \dot{W}^{1,np}(\R^n,\R^n) \to \mathscr{H}^p(\R^n)$ is not surjective but \eqref{eq:WeakFactorization} improves to a \emph{finitary} decomposition of $\mathscr{H}^p(\R^n)$ in terms of Jacobians. In Proposition \ref{prop:trichotomy}, we formulate a rather precise classification of infinitary and finitary decompositions in the setting of Theorem \ref{thm:abstractopenmappingprinciple}.

Take $\omega \in \overline \N \equiv \N\cup \{\infty\}$. Given $T$ as in Theorem \ref{thm:abstractopenmappingprinciple}, if every $f \in Y^*$ can be written as
\begin{equation} \label{eq:1/n-surjectivity}
f = \sum_{j=1}^\omega c_j\, T u_j, \qquad c_j \in \R, \, u_j \in \mathbb{B}_{X^*},
\end{equation}
then, following \cite{Dixon1988}, $T$ is said to be \emph{$1/\omega$-surjective}. If, furthermore,
\begin{equation} \label{eq:1/n-openness}
\sum_{j=1}^\omega |c_j| \lesssim \Vert f \Vert_{Y^*}
\end{equation}
for all $f \in Y^*$, then $T$ is said to be \emph{$1/\omega$-open}. \textsc{Dixon} \cite{Dixon1988} generalised \textsc{Horowitz}'s example by constructing, for every $m \in \N$, a continuous $1/m$-surjective bilinear map between Banach spaces which is not $1/m$-open. In fact, in Dixon's notation, the constants $c_j$ are subsumbed by the elements $u_j$. The formalism \eqref{eq:1/n-surjectivity}--\eqref{eq:1/n-openness} is, however, more standard in the context of atomic decompositions.

In Proposition \ref{prop:trichotomy} we show that, for $\omega\in \overline \N$, and under the assumptions of Theorem \ref{thm:abstractopenmappingprinciple}, $1/\omega$-surjectivity implies $1/\omega$-openness. 

\begin{prop} \label{prop:trichotomy}
Suppose $X$, $Y$ and $T$ satisfy the assumptions of Theorem \ref{thm:abstractopenmappingprinciple}. 
Let us define, for $\omega \in \overline \N$, the sets $$\Lambda_\omega \equiv \biggr\{\sum_{j=1}^\omega c_j T u_j : u_j\in \mb B_{X^*},\, c_j\in \R  \tp{ and } \sum_{j=1}^\omega |c_j|<\infty\biggr\}.$$
If $\Lambda_\infty$ is not meagre in $Y^*$, there is $\omega\in \overline \N$ such that $\Lambda_\omega=Y^*$ and $\bigcup_{m<\omega} \Lambda_{m}$ is meagre in $Y^*$; moreover, $T$ is $1/\omega$-open.
\end{prop}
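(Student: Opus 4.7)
The plan is to combine the Baire category argument of Theorem \ref{thm:abstractopenmappingprinciple} with a careful analysis of the structure of $\Lambda_\omega$. For each $\ell \in \N$ and $\omega \in \overline\N$, introduce the level sets
\[\Lambda_\omega^\ell := \biggl\{\sum_{j=1}^\omega c_j T u_j : u_j \in \mb B_{X^*},\, c_j \in \R,\, \sum_{j=1}^\omega |c_j| \le \ell \biggr\},\]
so that $\Lambda_\omega = \bigcup_{\ell \in \N} \Lambda_\omega^\ell$. The crucial claim is that each $\Lambda_\omega^\ell$ is \emph{weak$^*$-sequentially closed} in $Y^*$, and in particular norm-closed. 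Once this is established, the conclusion will follow by iterating the translation-scaling trick from Theorem \ref{thm:abstractopenmappingprinciple}.

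For finite $\omega = m$ the closedness is a direct diagonal extraction: given $f_n = \sum_{j=1}^m c_j^{(n)} T u_j^{(n)}$ with $\sum_j |c_j^{(n)}| \le \ell$ and $f_n \wstar f$, pass to a subsequence so that $c_j^{(n)} \to c_j$ in $\R$ and $u_j^{(n)} \wstar u_j$ in $X^*$; by \ref{it:weakcontinuity}, $f_n \wstar \sum_{j=1}^m c_j T u_j = f$, and $\sum_j |c_j| \le \ell$. The case $\omega = \infty$ is the main technical hurdle. Here I would reorder each representation so that $|c_1^{(n)}| \ge |c_2^{(n)}| \ge \cdots$ (legitimate by unconditional convergence of absolutely summable series), obtaining the decay $|c_j^{(n)}| \le \ell/j$. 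A standard diagonal extraction produces pointwise limits $c_j^{(n)} \to c_j$ and $u_j^{(n)} \wstar u_j$, with $\sum_j |c_j| \le \ell$. Since $T(\mb B_{X^*})$ is norm-bounded (being weak$^*$-sequentially compact by \ref{it:weakcontinuity}), the series $\tilde f := \sum_j c_j T u_j$ converges absolutely. Testing against arbitrary $\phi \in Y$, the bound $|c_j^{(n)} \langle T u_j^{(n)}, \phi\rangle| \le C \ell \|\phi\|/j$ permits dominated convergence in $j$, so $\langle f_n, \phi\rangle \to \langle \tilde f, \phi\rangle$; comparing with $\langle f_n, \phi\rangle \to \langle f, \phi\rangle$ identifies $f = \tilde f \in \Lambda_\infty^\ell$.

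With closedness in hand, set $\omega := \min\{m \in \overline \N : \Lambda_m \text{ is non-meagre in } Y^*\}$, which is well-defined as $\Lambda_\infty$ is non-meagre. By Baire, some $\Lambda_\omega^\ell$ contains a closed ball $\bar B_r(f_0)$. Then, exactly as in the proof of Theorem \ref{thm:abstractopenmappingprinciple}: for any $f$ with $\|f\|_{Y^*} \le r$ and any $k \in \N$, the element $f_0 + (\sigma_k^{Y^*})^{-1} f$ lies in $\Lambda_\omega^\ell$; applying $\sigma_k^{Y^*}$ and invoking \ref{it:isometries} expresses $f + \sigma_k^{Y^*} f_0$ as $\sum_{j} c_j^{(k)} T(\sigma_k^{X^*} u_j^{(k)}) \in \Lambda_\omega^\ell$. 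Since $\sigma_k^{Y^*} f_0 \wstar 0$, one has $f + \sigma_k^{Y^*} f_0 \wstar f$, and the weak$^*$-sequential closedness of $\Lambda_\omega^\ell$ just established gives $f \in \Lambda_\omega^\ell$. Homogeneity \ref{it:homogeneity} then upgrades this to $\Lambda_\omega = Y^*$ with the estimate witnessing $1/\omega$-openness. Finally, for each $m < \omega$ the set $\Lambda_m$ is meagre by minimality of $\omega$, so $\bigcup_{m<\omega}\Lambda_m$ is meagre as a countable union of meagre sets.

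The main obstacle is the closedness step for $\omega = \infty$: weak$^*$ limits of infinite sums interact badly with the bookkeeping of tails, and without the reordering $|c_j^{(n)}| \le \ell/j$ one has no uniform domination to justify passing the weak$^*$ limit inside the series.
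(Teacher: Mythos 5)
Your argument for finite $\omega$ is sound and in fact takes a slightly different route from the paper: you show directly that each $\Lambda_m^\ell$ is weak$^*$-sequentially closed and then repeat the translation-plus-Baire argument of Theorem~\ref{thm:abstractopenmappingprinciple}, whereas the paper applies Theorem~\ref{thm:abstractopenmappingprinciple} to the auxiliary $(s+1)$-homogeneous operator $\tilde T(\{d_j\},\{v_j\}) \equiv \sum_{j=1}^m d_j T v_j$ and then rescales the pairs $(d_j,v_j)$. Both are legitimate, and yours is arguably more elementary for that case.

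The case $\omega = \infty$, however, contains a genuine gap. The dominated-convergence step does not go through: the reordering gives the pointwise bound $|c_j^{(n)}| \le \ell/j$, but $\sum_j 1/j$ diverges, so $C\ell\|\phi\|/j$ is not a summable dominant and the Dominated Convergence Theorem cannot be invoked. More is true: the decreasing rearrangement plus the $\ell^1$-bound do not yield the uniform integrability in $j$ that would be needed to interchange $n\to\infty$ with $\sum_j$. A concrete obstruction is $c_j^{(n)} = \ell/n$ for $1\le j\le n$ and $c_j^{(n)}=0$ otherwise, with $u_j^{(n)} = u$ fixed: this is already decreasing, $\sum_{j>J}|c_j^{(n)}| = \ell(n-J)/n \to \ell$ as $n\to\infty$ for every $J$, and the diagonal limits are $c_j = 0$ for all $j$, so your $\tilde f = 0$ even though $f_n = \ell\,Tu$ for all $n$. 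Thus your argument produces a limit object that in general does not equal $f$, and the claimed weak$^*$-sequential closedness of $\Lambda_\infty^\ell$ is not established. This is precisely the phenomenon the paper warns about in the remark immediately following Proposition~\ref{prop:trichotomy}: the natural infinite-sum operator is never weak$^*$-to-weak$^*$ continuous unless $T\equiv 0$. The paper sidesteps it by a different mechanism, namely a separate Banach-space lemma (Lemma~3.1 of~\cite{Lindberg2017}) asserting that if $V$ is bounded, symmetric, and the set of absolutely convergent series from $V$ is non-meagre, then the corresponding normalised hull $\{\sum_j c_j f_j : f_j\in V,\ \sum_j|c_j|=1\}$ contains a ball centred at the origin; $1/\infty$-openness then follows by scaling. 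Without such a lemma (or a correct replacement for the closedness claim), the $\omega=\infty$ branch of your proof is incomplete.
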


\begin{proof}
We show that if $\bigcup_{m<\infty} \Lambda_m$ is not meagre in $Y^*$, then there is $m\in \N$ such that $\Lambda_m=Y^*$ and $\Lambda_{m-1}$ is meagre in $Y^*$. Note that, for each $m\in \N$, the set $\Lambda_m$ is closed;
it follows from the Baire Category Theorem that one of the sets $\Lambda_m$ contains a ball. By using the $s$-homogeneity of $T$, we write $\Lambda_m = \{\sum_{j=1}^m d_j \, T v_j \colon d_j \in \R, \, v_j \in X^*\}$. By applying Theorem \ref{thm:abstractopenmappingprinciple} to the $(s+1)$-homogeneous operator
 $$\tilde{T} \colon \R^m \times (X^*)^m \to Y^*, \qquad \tilde{T}\big(\{d_j\}_{j=1}^m,\{v_j\}_{j=1}^m\big) \equiv \sum_{j=1}^m d_j T v_j\,,$$
we find that for each $f\in Y^*$ there are $d_j \in \R$ and $v_j \in X^*$ such that 
\begin{equation} \label{eq:m-ary atomic decomposition}
\sum_{j=1}^m d_j T v_j = f, \qquad
\sum_{j=1}^m (|d_j|^{s+1} + \Vert v_j\Vert_{X^*}^{s+1}) \lesssim \Vert f \Vert_{Y^*}.
\end{equation}
We set $c_j = d_j \Vert v_j \Vert_{X^*}^s$ and denote $u_j = v_j/\Vert v_j \Vert_{X^*}$ if $v_j \neq 0$ and $u_j = 0$ if $v_j = 0$. Thus $c_j\, T u_j = d_j \,T v_j$ for $j=1,\ldots,m$. Consequently, through Young's inequality, \eqref{eq:m-ary atomic decomposition} yields
\begin{equation} \label{eq:m-ary atomic decomposition 2}
\sum_{j=1}^m c_j\, T u_j = f, \qquad \sum_{j=1}^m |c_j| \lesssim \Vert f \Vert_{Y^*}, \qquad u_j \in \mathbb{B}_{X^*}.
\end{equation}
It now suffices choose the smallest $m\in \N$ such that $T\colon X^*\to Y^*$ is $1/m$-surjective; the $1/m$-openness of $T$ is given by \eqref{eq:m-ary atomic decomposition 2}.

We finally show that if $\bigcup_{m<\infty} \Lambda_m$ is meagre but $\Lambda_\infty$ is non-meagre, then in fact $\Lambda_\infty = Y^*$ and 
$T$ is $1/\infty$-open. We denote $V \equiv \{\e \,T u \colon \e = \pm 1, \, u \in \mathbb{B}_{X^*}\} \subset Y^*$. Now $V$ is bounded and symmetric and, by assumption, $\{\sum_{j=1}^\infty c_j f_j : f_j \in V \tp{ for all }j \tp{ and } \sum_{j=1}^\infty |c_j| < \infty\}$ is non-meagre in $Y^*$. By ~\cite[Lemma 3.1]{Lindberg2017}, $\{\sum_{j=1}^\infty c_j T u_j \colon \sum_{j=1}^\infty |c_j| = 1, \, u_j \in \mathbb{B}_{X^*}\} \subset Y^*$ contains a ball centred at the origin. It immediately follows that given $f \in Y^*$, conditions \eqref{eq:1/n-surjectivity}--\eqref{eq:1/n-openness} can be satisfied with $\omega = \infty$.
\end{proof}

\begin{remark}
It is tempting to try and prove the last part of Proposition \ref{prop:trichotomy} by defining an auxiliary operator $\tilde{T} \colon \ell^{s+1}(\N) \times \ell^{s+1}(\N;X^*) \to Y^*$ via $T(\{d_j\}_{j=1}^\infty, \{v_j\}_{j=1}^\infty) \equiv \sum_{j=1}^\infty d_j T v_j$ and using Theorem \ref{thm:abstractopenmappingprinciple} on $\tilde{T}$, in analogy to the case $\omega < \infty$. However, such an operator is never weak$^*$-to-weak$^*$ continuous unless $T \equiv 0$. Indeed, suppose $T u \neq 0$ and set $d_{jk} = \delta_{jk}$ and $v_{jk} = \delta_{jk} u$. Now $\tilde{T}(\{d_{jk}\}_{j=1}^\infty, \{v_{jk}\}_{j=1}^\infty) = T u$ for all $k \in \N$ but $(\{d_{jk}\}_{j=1}^\infty, \{v_{jk}\}_{j=1}^\infty) \overset{*}{\rightharpoonup} 0$.
\end{remark}

\begin{ex}
Let us denote by $\mc H$ the Hilbert transform and by $T \colon L^2(\R,\R) \to \mathscr{H}^1(\R)$ the operator $T(\chi, \eta) \equiv \mathcal{H} \chi \,\mathcal{H} \eta - \chi\, \eta $. The strong factorization $\mathscr{H}^1(\C_+) = \mathscr{H}^2(\C_+) \cdot \mathscr{H}^2(\C_+)$ of analytical Hardy spaces, see e.g.\ \cite{Lindberg2020} for a proof, yields the surjectivity result
\begin{equation} \label{eq:surjectivityonR}
\mathscr{H}^1(\R) = \left\{T(\chi,\eta): \chi, \eta \in L^2(\R)\right\}.
\end{equation}
Thus, in this case, $\Lambda_1=\mathscr H^1(\R)$. 

Another example is obtained by considering the operator $\J\colon W^{1,np}(\R^n,\R^n)\to \mathscr{H}^p(\R^n)$, where $n \ge 2$ and $p \in [1,\infty)$; we emphasise that the Sobolev space is \emph{inhomogeneous}. In this case, $\Lambda_\infty$ is meagre in $\mathscr H^p(\R^n)$, see \cite{Lindberg2015} and Corollary \ref{cor:jacobianinhomogeneous}. However, if we instead consider the Jacobian as defined on ${\dot W}^{1,np}$, then $\Lambda_\infty=\mathscr H^p(\R^n)$ by the results of \cite{Hytonen2018}, although it is unclear whether this is optimal. 
We note that for $\J\colon {\dot W}^{1,2p}(\R^2,\R^2)\to\mathscr H^p(\R^2)$, the statement $\Lambda_1=\mathscr H^p(\R^2)$ is equivalent to 
$$\mathscr{H}^p(\R^2) = \left\{|\mathcal{S} \omega|^2 - |\omega|^2 \colon \omega \in L^{2p}(\R^2,\R^2)\right\},$$ 
compare with \eqref{eq:surjectivityonR}. Here $\mathcal{S}$ is the Beurling--Ahlfors transform, which one may think of as the square of a complex Hilbert transform \cite{Iwaniec2001}.
\end{ex}

We are not aware of operators satisfying the assumptions of Theorem \ref{thm:abstractopenmappingprinciple} and for which there is $1<m\in \N$ such that $\Lambda_m=Y^*$ but $\bigcup_{m'<m}\Lambda_{m'}$ is meagre in $Y^*$.

\section{Tools from Geometric Function Theory}
\label{sec:prelims}

This section collects, for the convenience of the reader, useful known results about Sobolev maps and mappings of finite distortion. These results will only be used in relation to the Jacobian determinant in Section \ref{sec:applicationstoJacobian}.

\subsection{The Lusin (N) property and the change of variables formula}

The following notions are very relevant in relation to the change of variables formula:

\begin{ndef}
  Let $u\colon \Omega\to \R^n$ be a continuous map which is
  differentiable a.e.\ in $\Omega$. Then:
  \begin{enumerate}
  \item $u$ has the \textit{Lusin (N) property} if
    $|u(E)|=0$ for any $E\subset \Omega$ such that $|E|=0$;
  \item $u$ has the \textit{(SA) property} if $|u(E)|=0$ for
    any open set $E\subset\Omega$ with $\J u=0$ a.e.\ in $E$.
  \end{enumerate}
\end{ndef}

In the one-dimensional case, the Lusin (N) property is well understood: for instance, on an interval, a continuous function of bounded variation has the Lusin (N) property if and only if it is absolutely continuous. However, in higher dimensions, the situation is much more complicated, although we have the following characterisation, proved in \cite{Martio1992}:

\begin{prop}\label{prop:SAproperty}
Let $u\in W^{1,n}(\Omega,\R^n)$ be a continuous map with $\J u\geq 0$ in $\Omega$. Then $u$ has the Lusin (N) property if and only if it has the (SA) property.
\end{prop}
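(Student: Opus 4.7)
The direction (N) $\Rightarrow$ (SA) is immediate from the change-of-variables formula. If $u$ has Lusin (N) and $E\subset\Omega$ is open with $\J u=0$ a.e.\ in $E$, the Marcus--Mizel area formula gives
\[
0 \,=\, \int_E \J u\, dx \,=\, \int_{\R^n} N(u,E,y)\, dy,
\]
where the non-negativity of $\J u$ lets us drop the absolute value. Since $N(u,E,y)\geq 1$ for $y\in u(E)$, this forces $|u(E)|=0$.

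For the converse I would argue by contradiction. Suppose $F \subset \Omega$ satisfies $|F| = 0$ and $|u(F)| > 0$; the goal is to produce an open set violating (SA). Split $F = F^+ \cup F^0$ with $F^+ = F\cap\{\J u > 0\}$ and $F^0 = F\cap\{\J u = 0\}$. On $F^+$ a continuous $W^{1,n}$ map is classically differentiable almost everywhere (Mal\'y) with invertible differential, and a Vitali covering argument based on the pointwise comparison $|u(B(x,r))|\leq(\J u(x)+o(1))|B(x,r)|$ as $r\to 0$ yields the localized area inequality $|u(A)|\leq \int_A \J u\, dx$ for every measurable $A\subset\{\J u>0\}$, without any use of (SA). Applied with $A=F^+$, this gives $|u(F^+)|=0$, so we may replace $F$ by $F^0$ and assume $F\subset\{\J u=0\}$.

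The crux is now to manufacture an open $V\subset\Omega$ with $\J u=0$ a.e.\ in $V$ and $|u(V)|>0$, for then (SA) is directly contradicted. By outer regularity choose open sets $U_k\supset F$ with $|U_k|\to 0$; then $|u(U_k)|\geq|u(F)|>0$ by continuity, and $|u(U_k\cap\{\J u>0\})|\leq\int_{U_k}\J u\,dx\to 0$ by absolute continuity of the Lebesgue integral together with the bound from the previous paragraph. Consequently $|u(U_k\cap\{\J u=0\})|$ stays bounded below by essentially $|u(F)|$. The difficulty is that $\{\J u=0\}$ may be nowhere dense, so $U_k\cap\{\J u=0\}$ is not open and cannot be fed directly into (SA).

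The main obstacle is exactly this conversion of a measurable witness into an open one; following Martio and Ziemer, I would resolve it via a Vitali--Carath\'eodory approximation. Since $\J u\in L^1_{\text{loc}}(\Omega)$, for every $\eta>0$ there is an upper semicontinuous $g\geq\J u$ with $\int_\Omega (g-\J u)<\eta$. The open sub-level set $\{g<\eta\}$ then contains a set of essentially full measure inside $F$ and satisfies $\int_{\{g<\eta\}} \J u<\eta(1+|\Omega|)$. Iterating over a sequence $\eta_j\downarrow 0$ and taking a carefully chosen countable intersection---keeping track at each stage of the portion of $u(F)$ that remains in the image---produces an open $V$ on which $\J u=0$ almost everywhere and $|u(V)|\geq|u(F)|/2>0$, the required contradiction with (SA).
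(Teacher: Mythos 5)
The paper does not provide a proof of Proposition~\ref{prop:SAproperty}: it is cited directly to Martio and Ziemer \cite{Martio1992}. So what you have written is an independent proof sketch, which I evaluate on its own terms.

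\noindent\textbf{The easy direction.} Your argument for (N) $\Rightarrow$ (SA) via the area formula (Theorem~\ref{thm:areaformula}, using $\J u\ge 0$ to drop the absolute value) is correct.

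\noindent\textbf{Gap 1: the localized area inequality on $\{\J u>0\}$.} You claim that for every measurable $A\subset\{\J u>0\}$ one has $\lvert u(A)\rvert\le\int_A\J u\,dx$ ``without any use of (SA)'', via a Vitali covering argument built from the pointwise comparison $\lvert u(B(x,r))\rvert\le(\J u(x)+o(1))\lvert B(x,r)\rvert$. That pointwise comparison does not follow from almost-everywhere differentiability alone. Differentiability at $x$ controls $u$ at points near $x$ along sets of full measure, but it does \emph{not} control the Lebesgue measure of the image $u(B(x,r))$: a null subset of $B(x,r)$ can be mapped onto a set of measure comparable to $\lvert B(x,r)\rvert$, and that is precisely the failure mode of the Lusin (N) property you are trying to rule out. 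In other words, the pointwise volume-derivative estimate you want is essentially a local version of (N), so using it to prove (N) is circular unless it is established by an independent argument. The fact that a continuous $W^{1,n}$ map with $\J u\ge 0$ a.e.\ has (N) on the set where $\J u>0$ is indeed a theorem, but the proof in the literature goes through topological degree theory for sense-preserving $W^{1,n}$ mappings (the degree--Jacobian identity $\int_{\R^n}\deg(u,D,y)\,dy=\int_D\J u\,dx$), not through a naive Vitali covering.

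\noindent\textbf{Gap 2: manufacturing an open witness.} The Vitali--Carath\'eodory step does not produce an open set to which (SA) can be applied. Three separate problems: (i) the open set $\{g<\eta\}$ satisfies $\int_{\{g<\eta\}}\J u\lesssim\eta$, which is small but in general nonzero, so (SA) says nothing about it; (SA) requires $\J u=0$ a.e.\ on the set. (ii) There is no guarantee that $F\subset\{g<\eta\}$: Vitali--Carath\'eodory controls $g-\J u$ in $L^1$, not pointwise, so $g$ may be large on the null set $F$ even though $\J u=0$ there, and the phrase ``contains a set of essentially full measure inside $F$'' has no content since $\lvert F\rvert=0$. (iii) Most seriously, a countable intersection of open sets is a $G_\delta$ set, not an open set, so even if the intersection had $\J u=0$ a.e.\ and $\lvert u(V)\rvert>0$, you could not feed it into (SA). This is exactly the obstruction you identify (``$\{\J u=0\}$ may be nowhere dense''), and the proposed iteration does not circumvent it; no amount of bookkeeping of the image measure turns a $G_\delta$ set into an open one. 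The actual Martio--Ziemer proof resolves this via the degree: sense-preservation forces $\deg(u,D,\cdot)\ge 0$ and the identity $\int_{\R^n}\deg(u,D,y)\,dy=\int_D\J u\,dx$ then relates $\lvert u(D)\rvert$ to $\int_D\J u\,dx$ for \emph{open} $D$ directly, without needing to squeeze the null set into an open set on which $\J u$ vanishes identically.

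So: the structure of the attempt is reasonable (reduce to $\{\J u=0\}$, then produce an open witness), but both reductions rest on unproven and, as stated, circular claims, and the open-set construction does not close. To make this proof correct one would need the degree-theoretic machinery of \cite{Martio1992}, at which point one is essentially reproducing their argument.
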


We remark that Proposition \ref{prop:SAproperty} is in general false if $\J u\not \geq 0$, see \cite{Reshetnyak1987} for a counterexample. The following result, see \cite{Maly1995}, is also useful for our purposes:

\begin{prop}\label{prop:pseudomonotonemappingsLusin}
Let $u\in W^{1,n}(\Omega,\R^n)$ be a continuous map such that, for some $K\geq 1$,
\begin{equation}
\tp{diam}(u(B_r(x))\leq K \tp{diam}(u(\p B_r(x))\qquad \tp{ for all } B_r(x)\Subset \Omega.
\label{eq:pseudomonotone}
\end{equation}
Then $u$ has the Lusin (N) property.
\end{prop}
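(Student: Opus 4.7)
The plan is to combine the pseudomonotonicity hypothesis \eqref{eq:pseudomonotone} with a Morrey-type estimate on spheres, in order to obtain a solid-ball bound of the form $|u(B_r(x))|\lesssim\int_{B_{2r}(x)}|\nabla u|^n\,\d y$; the Lusin (N) property then follows from the absolute continuity of the Lebesgue integral.

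First, I would establish the pointwise estimate
\begin{equation*}
|u(B_r(x))|\,\le\,C(n,K)\,r\int_{\partial B_r(x)}|\nabla u|^n\,\d S,
\end{equation*}
valid for every $x\in\Omega$ and a.e.\ $r\in(0,\mathrm{dist}(x,\partial\Omega))$. Three ingredients combine to prove it. The isodiametric inequality yields $|u(B_r(x))|\le c_n\,\mathrm{diam}(u(B_r(x)))^n$; assumption \eqref{eq:pseudomonotone} bounds this by $c_nK^n\,\mathrm{diam}(u(\partial B_r(x)))^n$; and, for a.e.\ $r$ (by Fubini and the ACL characterisation of Sobolev maps), the restriction $u|_{\partial B_r(x)}$ lies in $W^{1,n}(\partial B_r(x),\R^n)$ with tangential gradient pointwise dominated by $|\nabla u|$, so that Morrey's embedding on the $(n-1)$-sphere---permissible since $n>n-1$---gives, with the correct $r$-scaling, $\mathrm{diam}(u(\partial B_r(x)))^n\le C\,r\int_{\partial B_r(x)}|\nabla u|^n\,\d S$. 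I would then upgrade this to a solid-ball estimate: the coarea formula gives $\int_r^{2r}\int_{\partial B_s(x)}|\nabla u|^n\,\d S\,\d s\le\int_{B_{2r}(x)}|\nabla u|^n\,\d y$, so by the mean value theorem there exists $s\in(r,2r)$ with $s\int_{\partial B_s(x)}|\nabla u|^n\,\d S\le 2\int_{B_{2r}(x)}|\nabla u|^n\,\d y$; applying the pointwise estimate at such $s$ and using monotonicity $|u(B_r(x))|\le|u(B_s(x))|$ yields
\begin{equation*}
|u(B_r(x))|\,\le\,C(n,K)\int_{B_{2r}(x)}|\nabla u|^n\,\d y\qquad\text{whenever }B_{2r}(x)\Subset\Omega.
\end{equation*}

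Second, I would deduce the Lusin (N) property. Let $E\Subset\Omega$ with $|E|=0$. Given $\varepsilon>0$, outer regularity of Lebesgue measure and absolute continuity of the integral together yield an open $U$ with $E\subset U\Subset\Omega$ and $\int_U|\nabla u|^n\,\d y<\varepsilon$. A Whitney-type decomposition produces a countable cover $\{B_{r_k}(y_k)\}$ of $U$ with $B_{2r_k}(y_k)\subset U$ and such that the enlargements $\{B_{2r_k}(y_k)\}$ have overlap bounded by a constant $N=N(n)$. Summing the solid-ball estimate,
\begin{equation*}
|u(E)|\,\le\,|u(U)|\,\le\,\sum_k|u(B_{r_k}(y_k))|\,\le\,C\sum_k\int_{B_{2r_k}(y_k)}|\nabla u|^n\,\d y\,\le\,CN\,\varepsilon;
\end{equation*}
sending $\varepsilon\to 0$ gives $|u(E)|=0$.

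The hard part is the first step, namely making rigorous the Morrey inequality on $\partial B_r(x)$ uniformly in $r$: one needs both that for a.e.\ $r$ the trace $u|_{\partial B_r(x)}$ is genuinely Sobolev, with tangential gradient dominated by the ambient $\nabla u$ (a Fubini/ACL argument), and that the constant in the Morrey embedding on the sphere scales correctly under dilation so that the resulting inequality is uniform. The covering step, by contrast, is a routine application of Whitney decomposition and absolute continuity of the integral.
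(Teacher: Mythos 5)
The paper does not prove Proposition~\ref{prop:pseudomonotonemappingsLusin}; it cites it directly from Mal\'y's 1995 paper. Your proof is, up to minor notational points, the standard argument from that literature (Mal\'y, and earlier Mal\'y--Martio for the monotone case), so it is essentially the proof the paper is invoking. The chain is exactly the known one: isodiametric inequality, pseudomonotonicity \eqref{eq:pseudomonotone}, the supercritical Morrey embedding $W^{1,n}(\partial B_r)\hookrightarrow C^{0,1/n}(\partial B_r)$ with the correct $r$-scaling, a Fubini/mean-value selection of a good radius $s\in(r,2r)$ to pass from a spherical to a solid-ball estimate, and finally a bounded-overlap covering together with absolute continuity of $\int|\nabla u|^n$. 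Two small points you should make explicit when writing it out in full: first, one reduces to $E\Subset\Omega$ by exhausting $\Omega$ with compacta, since the general statement concerns arbitrary null $E\subset\Omega$; second, the scaling claim $\mathrm{diam}(u(\partial B_r))^n\le Cr\int_{\partial B_r}|\nabla_T u|^n\,\mathrm dS$ deserves a one-line derivation via pullback to $\mathbb S^{n-1}$, since it is the only place where a constant uniform in $r$ is actually needed. With those details, the argument is complete.
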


The change of variables formula is closely related to the Jacobian determinant:

\begin{thm}\label{thm:areaformula}
Let $u\in C^0(\Omega,\R^n)\cap W^{1,n}(\Omega,\R^n)$ be a map with the Lusin (N) property. Then
\begin{equation}
\int_E |\J u| \d x = \int_{\R^n} \mc N(y,u,E) \d y \qquad \tp{ for all measurable sets } E\subset \Omega,
\label{eq:areaformula}
\end{equation}
where $\mc N$ is the {\normalfont multiplicity function}, defined as $\mc N(y,u,E)\equiv \#\{x\in E: u(x)=y\}$. 
\end{thm}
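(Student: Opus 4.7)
The plan is to reduce Theorem \ref{thm:areaformula} to Federer's classical area formula for Lipschitz maps by means of Lusin-type approximation, and then use the Lusin (N) hypothesis to control the residual sets on which the approximation is lost.

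First I would invoke the fact that a continuous $W^{1,n}$ map is differentiable almost everywhere (a classical result of Mal\'y). This permits the decomposition $\Omega = N \cup Z \cup A$, where $N$ is the null set of non-differentiability, $Z = \{x \in \Omega \setminus N : \J u(x) = 0\}$, and $A = \{x \in \Omega\setminus N : \J u(x) \neq 0\}$. It clearly suffices to prove the equality \eqref{eq:areaformula} with $E$ replaced by $E \cap N$, $E \cap Z$, and $E \cap A$ separately.

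Next I would handle $A$ via Lusin approximation: there exist pairwise (essentially) disjoint measurable sets $K_j \subset A$ with $|A \setminus \bigcup_j K_j| = 0$ such that each $u|_{K_j}$ is the restriction of a globally Lipschitz map $\tilde u_j \colon \R^n \to \R^n$. Federer's area formula for Lipschitz maps, applied to each $\tilde u_j$ on the Borel set $K_j \cap E$, gives
\[
\int_{K_j \cap E} |\J u|\d x = \int_{\R^n} \mc N(y, u, K_j \cap E)\d y.
\]
Summing over $j$ via monotone convergence yields the formula on $\bigcup_j K_j$. For the residual null set $A_0 \equiv A \setminus \bigcup_j K_j$ the left-hand side is zero, and the right-hand side is zero because $|A_0|=0$ and the Lusin (N) property forces $|u(A_0)| = 0$; the same argument disposes of $N$.

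It remains to show that $Z$ contributes zero to both sides: the left side vanishes trivially because $\J u = 0$ on $Z$, while for the right side one must prove $|u(Z)| = 0$. Here I would use a direct covering argument rather than Lusin (N): for each $x \in Z$ the differential $\D u(x)$ has rank strictly less than $n$, so given $\varepsilon>0$ and $r$ small enough, $u(B_r(x))$ is contained in an $\varepsilon r$-neighbourhood of the $(n-1)$-dimensional affine subspace $u(x) + \D u(x)(\R^n)$, truncated to diameter $O(r)$, so $|u(B_r(x))| \le C \varepsilon r^n$. A Vitali covering of $Z$ by such balls, combined with the arbitrariness of $\varepsilon$, gives $|u(Z)|=0$ and completes the proof.

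The main obstacle is the execution of the Lusin approximation on $A$ in a form suitable for invoking Federer's area formula (one needs the $\tilde u_j$'s to agree with $u$ on $K_j$ and to have bi-Lipschitz structure on a full measure subset of each $K_j$ — a standard but somewhat delicate ingredient). The Lusin (N) hypothesis enters precisely at the step where one discards the null sets $N$ and $A_0$: without it, $u(N \cup A_0)$ could carry positive measure and the right-hand side of \eqref{eq:areaformula} would exceed the left.
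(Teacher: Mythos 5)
The paper does not itself prove Theorem \ref{thm:areaformula}; it cites \cite{Fonseca1995}, where the standard proof proceeds via approximate differentiability and the Federer--Whitney Lusin--Lipschitz approximation, which is also the skeleton of your argument. However, your version has a genuine gap at its very first step: a continuous map $u \in W^{1,n}(\Omega,\R^n)$, even one satisfying the Lusin (N) property, need \emph{not} be classically differentiable almost everywhere, and there is no Mal\'y theorem asserting this. (Quick counterexample: take $f \in W^{1,n}(\Omega)$ continuous and nowhere differentiable --- such $f$ exist since $L^n = L(n,n)$ fails the Stein $L(n,1)$ condition --- and set $u=(f,0,\dots,0)$. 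Then $u \in C^0 \cap W^{1,n}$, $u$ trivially has Lusin (N) because its image lies in a hyperplane, yet $u$ is nowhere differentiable.) What \emph{is} true, and what the reference proof uses, is that every $W^{1,1}_{\mathrm{loc}}$ map is \emph{approximately} differentiable a.e.

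Once you pass to approximate differentiability, your covering argument for the degenerate set $Z$ breaks down: approximate differentiability at $x$ only constrains $u$ on a set of density $1$ at $x$, not on the full ball $B_r(x)$, so the inclusion $u(B_r(x)) \subset$ $(\varepsilon r)$-slab fails, and a Vitali covering of $Z$ gives no control on $|u(Z)|$. The clean fix is to dispense with the $A$/$Z$ split altogether: the Whitney--Federer theorem produces Borel sets $K_j$ covering the set of approximate differentiability up to a null set, with $u|_{K_j}$ agreeing with a Lipschitz map $\tilde u_j$. Federer's area formula for Lipschitz maps on $K_j \cap E$ already yields both the case $\J u \ne 0$ and the case $\J u = 0$ (the latter is the easy half of Sard for Lipschitz maps), so you only need Lusin (N) once, to discard the residual null set $N \cup (\Omega \setminus N \setminus \bigcup_j K_j)$ exactly as you do. With that substitution your argument is correct and matches the cited proof.
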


The reader may find the proof of Theorem \ref{thm:areaformula}, together with a wealth of information on geometric properties of Sobolev maps, in \cite{Fonseca1995}.

\subsection{Mappings of finite distortion}

In this subsection we recall some useful facts about mappings of finite distortion and, for simplicity, we focus on the planar case $n=2$, see \cite{Astala2009}. The reader can also find these and higher-dimensional results in \cite{Hencl2014a, Iwaniec2001}.

\begin{ndef}\label{def:FD}
Let $u\in W^{1,1}_\tp{loc}(\Omega,\R^2)$ be such that $0 \leq \J u\in L^1_\tp{loc}(\Omega)$. We say that $u$ is a \emph{map of finite distortion} if there is a function $K\colon \Omega\to [1,\infty]$  such that  $K<\infty$ a.e.\ in $\Omega$ and
$$|\D u(x)|^2 \leq K(x)\, \J u(x) \quad \tp{ for a.e.\ } x \tp{ in } \Omega.$$
If $u$ has finite distortion, we can set $\tp{K}u(x) = \frac{|\D u|^2}{\J u(x)}$ if $\J u(x)\neq 0$ and $\tp{K}u(x)=1$ otherwise; this function is the (optimal) \emph{distortion} of $u$.
\end{ndef}

We note that, in Definition \ref{def:FD}, $|\cdot |$ denotes the operator norm of a matrix.

We summarise here some of the key analytic and topological properties of mappings of finite distortion in the plane:

\begin{thm}\label{thm:theoryofFDmaps}
Let $\Omega\subset \R^2$ and let $u\in W_\tp{loc}^{1,2}( \Omega, \R^2)$ be a  map of finite
distortion. Then:
\begin{enumerate}
\item $u$ has a continuous representative and, whenever $r<R$ and $B_R(x_0)\subset \Omega$, $$\left(\tp{osc}_{B_r(x_0)} u\right)^2\leq \frac{C}{\log(R/r)} \int_{B_R(x_0)} |\D u|^2 \d x;$$
\item $u$ has the Lusin (N) property;
\item $u$ is differentiable a.e.\ in $\Omega$;
\item \label{it:open}  if $\tp{K} u\in L^1(\Omega)$ then $u$ is open and discrete;
\item \label{it:multiplicity} if $\tp{K} u \in L^1(\Omega)$ then for each $\Omega'\Subset \Omega$ there is $m=m(\Omega')$ such that 
$$\mc N(y,u,\Omega')\leq m \qquad \tp{ for all }y\in u(\Omega').$$
\end{enumerate}
\end{thm}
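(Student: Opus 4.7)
My plan is to prove the five items in order, drawing on the length--area principle for (i)--(ii), Sobolev differentiability for (iii), and the Stoilow-type factorization of planar mappings of finite distortion for (iv)--(v). Throughout, the structural cornerstone is the pointwise distortion inequality $|\D u|^2 \le \tp{K} u \cdot \J u$ with $\tp{K} u < \infty$ a.e., coupled with the hypothesis $u \in W^{1,2}_\tp{loc}$.

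For (i), I would apply the classical length--area technique: for almost every $r \in (0,R)$ the map $u$ is absolutely continuous on $\p B_r(x_0)$, and Cauchy--Schwarz gives
\[
\bigl(\tp{length}\,u(\p B_r(x_0))\bigr)^2 \le 2\pi r \int_{\p B_r(x_0)} |\D u|^2 \d \mc H^1.
\]
A standard topological degree argument, which exploits $\J u \ge 0$ and the continuity of $u$ on $\overline B_r(x_0)$, bounds $\tp{osc}_{B_r(x_0)} u$ by a multiple of $\tp{length}(u(\p B_r(x_0)))$; dividing by $r$ and integrating in $r \in (r_0,R)$ yields the logarithmic estimate. With (i) in hand, the pseudomonotonicity condition~\eqref{eq:pseudomonotone} is automatic, so Proposition~\ref{prop:pseudomonotonemappingsLusin} delivers (ii). Item (iii) follows from a Stepanov-type differentiability theorem for planar $W^{1,2}_\tp{loc}$ maps.

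The main obstacle is (iv), openness and discreteness under the stronger hypothesis $\tp{K} u \in L^1(\Omega)$. I would rely on the Iwaniec--Sverak version of Stoilow factorization: the complex dilatation $\mu_u = \p_{\bar z} u / \p_z u$ satisfies $|\mu_u| \le (\tp{K} u - 1)/(\tp{K} u + 1) < 1$ a.e., and the integrability of $\tp{K} u$ allows one to solve the associated Beltrami equation, producing a homeomorphism $\varphi \colon \Omega \to \Omega'$ of finite distortion and a holomorphic function $h \colon \Omega' \to \C$ with $u = h \circ \varphi$. On any connected component where $u$ is non-constant, $h$ is non-constant and hence open and discrete by the classical theory of holomorphic maps; composing with the homeomorphism $\varphi$ transfers these properties to~$u$.

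Finally, (v) is a degree-theoretic consequence of (iv). Since $u$ is continuous, open, and discrete on $\Omega$ and $\J u \ge 0$, for $\Omega' \Subset \Omega$ and $y \notin u(\p \Omega')$ the Brouwer degree $\deg(u,\Omega',y)$ is well defined and locally constant, and $\mc N(y,u,\Omega') \le \deg(u,\Omega',y)$. Continuity of $u$ on $\overline{\Omega'}$ together with the fact that the degree is an integer-valued function that vanishes outside the compact set $u(\overline{\Omega'})$ bounds it uniformly, yielding the claimed multiplicity bound $m = m(\Omega')$.
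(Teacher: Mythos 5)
The theorem you are proving is stated in the paper \emph{without} proof: it is a survey of known facts from the theory of planar mappings of finite distortion, with references to~\cite{Astala2009, Hencl2014a, Iwaniec2001}, and the paper explicitly remarks that items (i)--(iii) are consequences of the monotonicity statement in Proposition~\ref{prop:FDimpliesmonotone}. So there is no paper proof to compare against; you are essentially reconstructing the textbook arguments, and the overall route (length--area for (i), pseudomonotonicity plus Proposition~\ref{prop:pseudomonotonemappingsLusin} for (ii), Iwaniec--\v Sver\'ak Stoilow factorization for (iv)) is the right one.

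There is, however, a genuine gap in your treatment of item (iii). You assert that a.e.\ differentiability ``follows from a Stepanov-type differentiability theorem for planar $W^{1,2}_\tp{loc}$ maps.'' No such theorem exists: $W^{1,2}_\tp{loc}$ is the critical Sobolev class in the plane, and general maps in this class need not be differentiable a.e.\ (indeed, they need not even be continuous). The a.e.\ differentiability of planar maps of finite distortion is a theorem of Gehring--Lehto (extended by Onninen), and the hypothesis that makes it work is precisely the monotonicity of the map, i.e.\ the content of Proposition~\ref{prop:FDimpliesmonotone}. You should replace the appeal to ``Stepanov for $W^{1,2}_\tp{loc}$'' by an appeal to monotonicity and the Gehring--Lehto theorem; as written, the step is false for the class of maps you claim to apply it to.

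Two smaller points. In (i), you bound the interior oscillation by the length of the boundary image via ``a standard topological degree argument which exploits~$\J u\ge 0$ and the continuity of $u$ on $\overline B_r$'' --- but continuity is exactly what (i) is establishing, so invoking it is circular; the correct intermediate step is the monotonicity inequality \eqref{eq:pseudomonotone}, which is proved without assuming continuity. In (v), your degree argument is somewhat loose: the degree is constant only on components of $\R^2\setminus u(\p\Omega')$ and is not obviously uniformly bounded. Since you already have the Stoilow factorization $u=h\circ\varphi$ from (iv), the cleanest route is to transfer the problem to $h$ and use the fact that a non-constant holomorphic function has bounded valence on compactly contained subsets; this delivers $m(\Omega')<\infty$ directly.
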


Whenever $u$ is a map of finite distortion we always implicitly assume that $u$ denotes the continuous representative of the equivalence class in $W^{1,2}_\tp{loc}(\Omega,\R^2)$. If $u$ is such that $\tp{K} u\in L^1(\Omega)$, we say that $u$ has \textit{integrable distortion}; the theory of such maps was pioneered in \cite{Iwaniec1993a}.

We remark that the first three properties of Theorem \ref{thm:theoryofFDmaps} are a consequence of the fact that mappings of finite distortion are \textit{monotone in the sense of Lebesgue}:

\begin{prop}\label{prop:FDimpliesmonotone}
Let $u\in W^{1,2}_\tp{loc}(\Omega,\R^2)$ be a map of finite distortion; then  \eqref{eq:pseudomonotone} holds. In fact, if we measure the diameter in $\R^2$ with respect to the $\ell^\infty$ norm, we can take $K=1$.
\end{prop}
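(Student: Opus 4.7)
The plan is to reduce the diameter inequality to a statement about the oscillations of the coordinate functions $u^1, u^2$ of $u$, and then to verify that reduction via the Lebesgue-monotonicity of each component.

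First I would make the key observation that, if $\vert\cdot\vert_\infty$ denotes the supremum norm on $\R^2$ and $A\subset \R^2$ is bounded, then
\[
\tp{diam}_\infty(A) = \max_{i=1,2} \tp{osc}(\pi_i\vert_A),
\]
where $\pi_i$ is the $i$-th coordinate projection. Using the continuous representative of $u$ supplied by Theorem \ref{thm:theoryofFDmaps}(i), the desired inequality (with $K=1$ for the $\ell^\infty$ diameter) therefore reduces to
\[
\tp{osc}_{\overline{B_r(x)}} u^i = \tp{osc}_{\p B_r(x)} u^i, \qquad i=1,2,
\]
whenever $B_r(x)\Subset \Omega$. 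The inclusion $u(\p B_r(x))\subset u(\overline{B_r(x)})$ gives the trivial inequality; so the content lies in the reverse one, which in turn amounts to showing that each component $u^i$ is monotone in the sense of Lebesgue on $\overline{B_r(x)}$, i.e.\ both its maximum and minimum over the closed ball are attained on the boundary.

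To establish the Lebesgue monotonicity of the components, I would exploit the structural consequences of the distortion inequality $|\D u|^2\leq K(x)\J u(x)$ with $K<\infty$ a.e.: namely, $\J u\geq 0$ a.e.\ in $\Omega$, and moreover $\D u = 0$ a.e.\ on the set $\{\J u = 0\}$. Given $B\Subset\Omega$ and $c>\max_{\p B} u^1$, I would test the truncation $\varphi = (u^1 - c)_+ \in W^{1,2}_0(B)$: on $\{\varphi > 0\}$ one has $\nabla\varphi = \nabla u^1$, and the degeneracy relation above prevents the gradient from "hiding" inside the null-Jacobian set. A standard capacity argument, together with the conformal energy estimate for the component $u^1$ coming from $|\nabla u^1|^2\leq |\D u|^2 \leq K\J u$, then forces $\varphi\equiv 0$, yielding the maximum principle on $B$; the same argument applied to $-u^1$ and to $\pm u^2$ gives the full monotonicity statement.

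The hard part is the second step: carrying out the monotonicity argument without any global integrability assumption on the distortion function $K$. The point is that the pointwise relation $\D u = 0$ a.e.\ on $\{\J u = 0\}$ is precisely what is needed to reduce the analysis to the non-degenerate region $\{\J u>0\}$, where $u$ behaves as a genuine sense-preserving Sobolev map and the classical weak-monotonicity arguments of Manfredi and of Iwaniec--Koskela--Martin apply. Once this monotonicity of the individual components is in hand, the first-step reduction delivers the $\ell^\infty$ diameter inequality with constant $K=1$, and the Euclidean statement with some universal $K$ follows from the equivalence of norms on $\R^2$.
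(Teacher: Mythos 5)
Your overall strategy is correct and is indeed the standard one, which the paper itself implicitly invokes by immediately relating \eqref{eq:pseudomonotone} to Lebesgue monotonicity of the coordinate functions: the reduction $\tp{diam}_\infty(A) = \max_{i} \tp{osc}(\pi_i|_A)$ is exact, so the claim with $K=1$ is equivalent to weak monotonicity of $u^1$ and $u^2$, and your identification of the relevant structural facts --- $\J u\ge 0$ a.e.\ and $\D u=0$ a.e.\ on $\{\J u=0\}$, both immediate from Definition~\ref{def:FD} --- is right on target.

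The one place where your sketch is imprecise is the finishing step. There is no genuine capacity argument nor ``conformal energy estimate'' needed; invoking those terms obscures the actual mechanism, which is the \emph{null-Lagrangian identity} for the Jacobian. Concretely, for $B\Subset\Omega$ and $c>\max_{\p B}u^1$, set $\varphi=(u^1-c)_+\in W^{1,2}_0(B)$ and consider $v=(\varphi,u^2)\in W^{1,2}(B,\R^2)$. Since one component has zero trace, a density argument gives $\int_B\det\D v\,\d x=0$. On the other hand, $\det\D v=\J u\ge 0$ a.e.\ on $\{\varphi>0\}$ and $\det\D v=0$ a.e.\ elsewhere, so $\det\D v\ge 0$ a.e.\ in $B$; combined with the vanishing integral this forces $\J u=0$ a.e.\ on $\{\varphi>0\}$. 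Only now does the pointwise distortion inequality enter: it yields $\D u=0$, hence $\nabla\varphi=0$, a.e.\ on $\{\varphi>0\}$, and therefore $\nabla\varphi=0$ a.e.\ in $B$; as $\varphi\in W^{1,2}_0(B)$ this gives $\varphi\equiv 0$. Repeating with $-u^1$ and $\pm u^2$ gives full weak monotonicity. In particular, the pointwise bound $|\nabla u^1|^2\le K\J u$ is used only to pass from $\J u=0$ to $\D u=0$; it does not supply an integrable energy bound (recall $K$ is merely finite a.e., not integrable). With this correction your outline reproduces the classical argument (Manfredi; Iwaniec--Koskela--Martin), which is precisely what the paper is citing.
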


\section{Applications to the Jacobian equation}
\label{sec:applicationstoJacobian}

This section expands on the relation between Theorem \ref{thm:abstractopenmappingprinciple} and Question \ref{qu:Lp}. 
We begin by discussing submersions, as it is well-known that they  form a subclass of the class of open operators. We show that, for $p\in [1,2)$, there are no submersions between the spaces in \eqref{eq:spaces}. As a consequence, we deduce in Corollary \ref{cor:JacobianNondifferentiable} the non-differentiability of an hypothetical right inverse of the Jacobian.
In a different direction, we also combine the results from Section \ref{sec:prelims} with Theorem \ref{thm:abstractopenmappingprinciple} in order to study solutions of the Jacobian equation, and in particular we prove Theorem \ref{bigthm:nicesols}.

\subsection{The Jacobian is a submersion nowhere}
\label{sec:submersion}

Let $X, Y$ be Banach spaces. We use the following terminology:

\begin{ndef}
An operator $T\colon X\to Y$ is said to be a \textit{submersion at} $x_0\in X$ if $T$ is G\^{a}teaux-differentiable at $x_0$ and $T'(x_0)\colon X\to Y$ is onto. It is said to be a \textit{regular submersion at} $x_0$ if additionally $\ker T'(x_0)$ is complemented in $X$.
\end{ndef}

We note that, in the literature, the word submersion often refers to a regular submersion. By analogy to the finite-dimensional case, if $T$ is a submersion at $x_0$ then it is open at $x_0$, see for instance \cite[Corollary 15.2]{Deimling1985}:

\begin{thm}\label{thm:submersion}
Let $T\colon X\to Y$ be a locally Lipschitz submersion at $x_0\in X$. For all $R>0$ sufficiently small, there is $r>0$ such that $B_r(T(x_0))\subseteq T(B_R(x_0))$.
\end{thm}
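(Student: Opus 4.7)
After a translation I may assume $x_0 = 0$ and $T(x_0) = 0$, so the statement becomes $B_r(0) \subseteq T(B_R(0))$. Set $L \equiv T'(0)$. The local Lipschitz hypothesis forces $L$ to be a bounded linear operator (its operator norm is at most the local Lipschitz constant of $T$), and by the submersion hypothesis $L$ is surjective. The classical linear open mapping theorem then furnishes a constant $M > 0$ and a (generally nonlinear) right inverse $S \colon Y \to X$ with $L \circ S = \mathrm{Id}_Y$ and $\|S(y)\|_X \leq M \|y\|_Y$ for every $y \in Y$.

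I would next fix $R > 0$ small enough that $T$ is Lipschitz on $B_{2R}(0)$ and that the uniform first-order expansion
\[
\|T(x + h) - T(x) - L h\|_Y \leq \eta\, \|h\|_X, \qquad x, x + h \in B_{2R}(0),
\]
holds with $\eta M \leq 1/2$. Setting $r \equiv R/(2M)$, for any target $y \in B_r(0)$ I would run the Newton--Picard iteration $x_0 = 0$, $x_{n+1} \equiv x_n + S(y - T(x_n))$. Because $L(x_{n+1} - x_n) = y - T(x_n)$, the remainder bound yields
\[
\|y - T(x_{n+1})\|_Y = \|T(x_{n+1}) - T(x_n) - L(x_{n+1} - x_n)\|_Y \leq \eta\, \|x_{n+1} - x_n\|_X \leq \tfrac{1}{2}\|y - T(x_n)\|_Y,
\]
together with $\|x_{n+1} - x_n\|_X \leq M \|y - T(x_n)\|_Y$. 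Summing the resulting geometric progression keeps every iterate inside $B_R(0)$ and renders $\{x_n\}$ Cauchy; the limit $x \in \overline{B_R(0)}$ satisfies $T(x) = y$, establishing $B_r(0) \subseteq T(B_R(0))$.

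The main obstacle is justifying the uniform remainder bound, since only G\^ateaux differentiability at the single point $x_0$ is assumed. Equivalently, one must show that the locally Lipschitz map $E(x) \equiv T(x) - L x$ has arbitrarily small Lipschitz modulus on $B_{2R}(0)$ as $R \to 0$, i.e.\ that $T$ is \emph{strictly} differentiable at $x_0$. In finite dimensions this is automatic by compactness of the unit sphere; in the Banach-space setting of Deimling's Corollary 15.2 the argument proceeds through $C^1$ Fr\'echet regularity or through Clarke's calculus for Lipschitz mappings. An alternative route that sidesteps the iteration altogether is an Ekeland variational argument applied to $x \mapsto \|T(x) - y\|_Y$ over $\overline{B_R(0)}$: if $y \in B_r(0)$ were not attained, Ekeland's principle would produce an approximate critical point, and testing against directions $S(v)$ provided by the linear right inverse contradicts the surjectivity of $L$.
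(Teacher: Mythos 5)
The paper does not prove this theorem itself; it is quoted from Deimling's Corollary 15.2, so there is no internal argument to compare against. Your Newton--Picard scheme with a bounded nonlinear right inverse $S$ is precisely the mechanism behind Graves' theorem, and once the uniform remainder bound is granted the bookkeeping is correct: the contraction of $\|y-T(x_n)\|_Y$, the confinement of the iterates by summing the geometric series, and passage to the limit all go through (with the cosmetic fix that you land in $\overline{B_R(0)}$, so one should start from a slightly smaller $r$).

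The gap you flag is genuine, and it cannot be closed under the hypotheses as literally stated. The paper's notion of a submersion at $x_0$ asks only for G\^ateaux differentiability at the single point $x_0$, and in infinite dimensions local Lipschitz continuity together with G\^ateaux differentiability at $x_0$ does not yield strict differentiability there. Without the strict estimate
\[
\|T(x+h)-T(x)-Lh\|_Y\le\eta\,\|h\|_X,\qquad x,\,x+h\in B_{2R}(0),
\]
the error at step $n$ of your iteration is only $o(\|x_n\|_X)$ rather than $o(\|x_{n+1}-x_n\|_X)$, so it stalls at a size comparable to $\|y\|_Y$ instead of contracting. Your Ekeland fallback suffers the same difficulty: the approximate minimizer $\bar x$ it produces is not $x_0$, and you have no linearization of $T$ at $\bar x$, so testing against the directions $S(v)$ is not available there. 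Deimling's result is established under strict differentiability at $x_0$ (in particular for $C^1$ maps), so this theorem should be read with ``submersion'' understood in the strict sense; under that reading your argument is a complete proof. For the paper the mismatch is harmless, since the theorem is invoked only to motivate the later proposition ruling out the submersion property of the Jacobian.
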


The submersion condition also plays an important role in Lyusternik's theory of constrained variational problems, through the existence of Lagrange multipliers. 
We remark that, in that setting, it is customary to deal with regular submersions. Here we do not discuss further the existence of Lagrange multipliers nor their properties, referring instead the interested reader to \cite[§43]{Zeidler1985} for their general theory. In the context of Question \ref{qu:Lp}, Lagrange multipliers were considered in the third author's doctoral thesis \cite{Lindberg2015}.

The main result of this subsection is Proposition \ref{prop:Jacobiannotsubmersion}, which shows that Theorem \ref{thm:submersion} does not apply to the Jacobian. 
We begin with the following straightforward lemma:

\begin{lemma}
Suppose $T\colon X\to Y$ is G\^{a}teaux-differentiable. If $Y^*$ does not embed into $X^*$ then $T$ is a submersion at no point.
\end{lemma}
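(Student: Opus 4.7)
The plan is to reduce the statement to the classical linear open mapping theorem plus a standard duality argument. Suppose, for contradiction, that $T$ is a submersion at some $x_0 \in X$. By the definition of submersion, the G\^{a}teaux derivative $L \equiv T'(x_0) \colon X \to Y$ is a bounded linear surjection between Banach spaces. My goal is then to produce an embedding $Y^* \hookrightarrow X^*$, contradicting the hypothesis.

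The classical linear open mapping theorem applied to $L$ yields a constant $c > 0$ such that $c\, \mb B_Y \subseteq L(\mb B_X)$. Passing to the adjoint $L^* \colon Y^* \to X^*$, I would then estimate, for any $y^* \in Y^*$,
\[
\Vert L^* y^* \Vert_{X^*} = \sup_{x \in \mb B_X} |\langle y^*, L x \rangle_Y| \geq \sup_{y \in c\,\mb B_Y} |\langle y^*, y \rangle_Y| = c\, \Vert y^* \Vert_{Y^*}.
\]
This shows that $L^*$ is bounded below, hence injective with closed range, and in fact an isomorphism from $Y^*$ onto the closed subspace $L^*(Y^*) \subseteq X^*$. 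Consequently $Y^*$ embeds (isomorphically, in fact even $(1/c)$-isomorphically) into $X^*$, contradicting the standing assumption of the lemma. The contrapositive then gives the stated conclusion.

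There is essentially no technical obstacle; the only point to be careful about is the convention that the G\^{a}teaux derivative $T'(x_0)$ is understood to be a \emph{bounded} linear operator (which is implicit in the definition of submersion used here, since otherwise talk of its range being onto $Y$ in the Banach-space sense would be inappropriate). Under this convention, the argument is just ``surjective bounded linear maps $X\to Y$ dualise to linear embeddings $Y^* \to X^*$,'' and the proof is a couple of lines.
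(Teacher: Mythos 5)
Your proof is correct and follows essentially the same route as the paper's: pass to the adjoint of the surjective derivative $L = T'(x_0)$, use the linear open mapping theorem to get a lower bound on $L^*$, and conclude that $L^*$ is an isomorphism onto its (closed) image, giving the embedding $Y^*\hookrightarrow X^*$. The only difference is that you spell out the lower-bound estimate for $L^*$ explicitly, which the paper leaves implicit.
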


\begin{proof}
We prove the contrapositive. Suppose $T$ is a submersion at some $x_0\in X$, that is, $L\equiv T'(x_0)\colon X\to Y$ is onto. By the classical open mapping theorem, $L^*\colon Y^*\to X^*$ is bounded from below and is thus an isomorphism onto its image. Thus $Y^*$ embeds into $X^*$.
\end{proof}

The main result of this section is the following:

\begin{prop}\label{prop:Jacobiannotsubmersion}
Let $p\in [1,2)$ and suppose $T \colon {\dot W}^{1,np}(\R^n,\R^n)\to \mathscr H^p(\R^n)$ is G\^{a}teaux-differentiable. Then $T$ is a submersion at no point.
\end{prop}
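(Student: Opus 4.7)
My plan is to apply the preceding lemma to reduce the proposition to the purely functional-analytic statement that $\mathscr{H}^p(\R^n)^*$ does not embed isomorphically into $\dot{W}^{1,np}(\R^n,\R^n)^*$. I would then distinguish these two dual spaces using Rademacher cotype.

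First I would show that $\dot{W}^{1,np}(\R^n,\R^n)^*$ has cotype $2$. The gradient map $u\mapsto \D u$ realises $\dot{W}^{1,np}(\R^n,\R^n)$ isometrically as a closed subspace of $L^{np}(\R^n,\R^{n\times n})$. Since $n\geq 2$ and $p\in[1,2)$ force $np\geq 2$, the ambient $L^{np}$ has Rademacher type $2$, and since type passes to closed subspaces, so does $\dot{W}^{1,np}(\R^n,\R^n)$. By the standard duality between the type of a Banach space and the cotype of its dual, $\dot{W}^{1,np}(\R^n,\R^n)^*$ then has cotype $2$, and hence so must any Banach space admitting an isomorphic embedding into it.

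Next I would show that $\mathscr{H}^p(\R^n)^*$ does \emph{not} have cotype $2$. Fix disjoint Euclidean balls $B_1,\ldots,B_N\subset\R^n$ of equal positive volume, let $f_i=\mathbf{1}_{B_i}$, and let $\epsilon_i$ be independent Rademacher signs. For $p\in(1,2)$ we have $\mathscr{H}^p(\R^n)^*=L^{p'}(\R^n)$ with $p'>2$, and disjointness of supports gives
\[\Big(\sum_{i=1}^N \Vert f_i\Vert_{L^{p'}}^2\Big)^{1/2}\sim N^{1/2}, \qquad \mathbb{E}\Big\Vert\sum_{i=1}^N \epsilon_i f_i\Big\Vert_{L^{p'}}\sim N^{1/p'},\]
so the cotype-$2$ inequality must fail as $N\to\infty$, since $1/2>1/p'$. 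For $p=1$, $\mathscr{H}^1(\R^n)^*=\BMO(\R^n)$, and the bound $\Vert\,\cdot\,\Vert_{\BMO}\leq 2\Vert\,\cdot\,\Vert_{L^\infty}$ together with disjointness of supports yields $\mathbb{E}\Vert\sum_{i=1}^N \epsilon_i f_i\Vert_{\BMO}\leq 2$; since the individual $\BMO$-norms $\Vert f_i\Vert_{\BMO}$ are a positive constant independent of $i$, letting $N\to\infty$ again contradicts cotype $2$. Together with the previous step this produces a contradiction, proving the proposition.

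The main challenge is the correct identification of the type/cotype properties of the two dual spaces, which rests on the gradient embedding into $L^{np}$ and on the standard type-cotype duality. The endpoint $p=1$, $n=2$ deserves a passing remark: there $np=2$ and $\dot{W}^{1,2}(\R^2,\R^2)$ is Hilbert, so its dual automatically has cotype $2$; the $\BMO$ computation above remains valid and handles this case uniformly with the rest of the range.
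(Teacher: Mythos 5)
Your argument is correct, and it takes a genuinely different (and slightly more unified) route from the paper's. Both proofs begin the same way, reducing via the preceding lemma to the purely Banach-space statement that $\mathscr{H}^p(\R^n)^*$ does not embed isomorphically into $\dot{W}^{1,np}(\R^n,\R^n)^*$. The paper then splits into two cases: for $p=1$ it observes that $\mathrm{BMO}=\mathscr{H}^1{}^*$ is non-reflexive while $\dot W^{1,n}{}^*$ is reflexive, and for $p\in(1,2)$ it passes through $(-\Delta)^{1/2}$ to reduce to Lebesgue spaces and then cites Lemma~\ref{lemma:rangeofq} (itself proved via type/cotype and the sharp type/cotype of $\ell^q$) to rule out $L^{p'}\hookrightarrow L^{(np)'}$. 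You instead verify directly that $\dot W^{1,np}{}^*$ has cotype~$2$ (type~$2$ of $L^{np}$ for $np\geq 2$, inheritance by the gradient image, duality of type and cotype), and then exhibit, for every $p\in[1,2)$ including the endpoint, an explicit sequence of disjoint indicators witnessing the failure of cotype~$2$ in $\mathscr{H}^p{}^*$. The computations are right in both regimes: for $p\in(1,2)$ the growth rate $N^{1/p'}$ with $p'>2$ falls short of $N^{1/2}$, and for $p=1$ the $L^\infty\hookrightarrow\mathrm{BMO}$ bound caps the Rademacher average while the individual $\mathrm{BMO}$ norms stay bounded below. What the paper's route buys is the reusable classification of $L^q\hookrightarrow L^p(\R^n,H)$, which it also deploys in Remark~\ref{remark:optimalrange} to discuss the $p=2$ endpoint and the $p>2$ regime; what your route buys is self-containedness and a single mechanism covering $p=1$ without a special case. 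One minor simplification you could make: the cotype inequality is usually stated with a second moment on the right, but by Kahane's inequality this is equivalent to the first moment you used, so nothing is lost.
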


The range of $p$ in Proposition \ref{prop:Jacobiannotsubmersion} is optimal, see Remark \ref{remark:optimalrange}.

\begin{proof}
The case $p=1$ is simple: $(\mathscr H^1(\R^n))^*=\tp{BMO}(\R^n)$ is not reflexive and thus it cannot embed into a reflexive space, such as ${\dot W}^{1,n}(\R^n,\R^n)^*$.

For $p\in(1,2)$, we begin by using the isomorphism $(-\Delta)^{1/2} \colon {\dot W}^{1,np}(\R^n,\R^n)\to L^{np}(\R^n,\R^n)$.
Thus it suffices to show that $L^{p'}$ does not embed into $L^{(np)'}$ for $p\in (1,2)$, where $q'$ denotes the H\"older conjugate of $q$. 
Since $p'>2$, we appeal to Lemma \ref{lemma:rangeofq} below to finish the proof.
\end{proof}

Thus, it remains to prove the next lemma, where $H$ is a Hilbert space.

\begin{lemma}\label{lemma:rangeofq}
Let $p\in [1,2], q\in [1,\infty)$. If $L^q(\R^n)$ embeds into $L^p(\R^n,H)$ then $1\leq p \leq q \leq 2$.
\end{lemma}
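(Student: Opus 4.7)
My plan is to invoke the theory of \emph{Rademacher type and cotype}. Recall that a Banach space $X$ has type $t \in [1,2]$ (respectively cotype $s \in [2,\infty]$) if the $L^2$-average of $\|\sum_i r_i x_i\|_X$ over random signs $(r_i)$ is bounded above by (respectively, below by) a multiple of $\left(\sum_i \|x_i\|_X^t\right)^{1/t}$ (respectively, $\left(\sum_i \|x_i\|_X^s\right)^{1/s}$). Both invariants are inherited by closed subspaces. The two classical facts I would use are: first, for $r \in [1,\infty)$, $L^r(\R^n)$ has optimal type $\min(r,2)$ and optimal cotype $\max(r,2)$; second, for any Banach space $X$, the Bochner space $L^r(\R^n, X)$ has type $\min(r,\textup{type}(X))$ and cotype $\max(r,\textup{cotype}(X))$.

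Granting these facts, the argument is immediate. Since $H$ is Hilbert, it has both type and cotype equal to $2$. Combined with the hypothesis $p \in [1,2]$, this tells us that $L^p(\R^n, H)$ has type $p$ and cotype $2$. If $L^q(\R^n)$ embeds into this space, it must inherit cotype $2$, forcing $\max(q,2) \leq 2$, i.e.\ $q \leq 2$. It also inherits type $p$; since $q \leq 2$, the optimal type of $L^q(\R^n)$ is $q$, forcing $q \geq p$. This yields $p \leq q \leq 2$, as claimed.

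I expect no serious obstacle: the content lies entirely in the geometry of Banach spaces. The type/cotype of scalar $L^r$-spaces follows from Khintchine's inequality, the stability under Bochner integration from Fubini combined with Kahane's contraction principle, and heredity for subspaces is tautological. All three are standard and available in any reference on Banach space geometry, so the real work has already been done elsewhere. The only mildly subtle point is checking that one uses cotype $2$ (not some larger cotype) of the target, which is exactly what the Hilbert-valued hypothesis provides and which is crucial for pinning down $q \leq 2$ rather than the weaker $q \leq \text{something}$.
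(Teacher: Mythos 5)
Your proof is correct and is essentially the same as the paper's: both arguments rest on type/cotype being inherited by subspaces, the Hilbert-valued Bochner space $L^p(\R^n,H)$ having type $p$ and cotype $2$ for $p\in[1,2]$, and the optimality of the type/cotype exponents for $L^q$ (which the paper reads off via the subspace $\ell^q$ and its standard basis, a cosmetic difference).
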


This result is well-known to the experts and a very complete statement can be found in \cite[Theorem 6.4.20 and Proposition 12.1.10]{Albiac2016}, which we quote here:

\begin{prop}\label{prop:subspacesofLp}
Let $p,q \in [1,\infty)$. Then $L^q(\R^n)$ embeds into $L^p(\R^n)$ if and only if one of the following conditions holds:
\begin{enumerate}
\item\label{it:p<2} $1 \le p \le q \le 2$,
\item\label{it:p>2} $2 < p < \infty$ and $q \in \{2,p\}$.
\end{enumerate}
Moreover, if $1<p,q$ then $L^q$ embeds complementably into $L^p$ if and only if $q\in\{2,p\}$.
\end{prop}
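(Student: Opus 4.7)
The plan is to establish the embeddings claimed in (i) and (ii) by explicit construction using stable random measures, and to rule out the remaining pairs $(p,q)$ first through Rademacher type and cotype arguments, and then -- in the one range where these are insufficient -- through a deeper structural theorem of Kadec and Pe\l czy\'nski. Throughout, I identify $L^r(\R^n)$ with $L^r([0,1])$, which is legitimate since all separable infinite-dimensional $L^r$ spaces over non-atomic measure spaces are mutually isomorphic for $r \in [1,\infty)$.

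For the embeddings, given $q \in (0,2]$ I would let $M$ be a symmetric $q$-stable random measure on $([0,1], \mc L, |\cdot|)$. The stochastic integral $J(f) \equiv \int_0^1 f \d M$ is then $q$-stable with scale $\Vert f \Vert_{L^q}$, so by the moment formula for stable laws $\Vert J(f) \Vert_{L^p(\Omega)} = c_{p,q} \Vert f \Vert_{L^q}$ whenever $p < q$; this produces $L^q \hookrightarrow L^p(\Omega)$ and covers (i) for $p < q < 2$. The case $p = q$ is trivial, and for $q = 2$ Gaussian noise plays the role of $M$, giving an embedding $L^2 \hookrightarrow L^p$ for every $p \in [1,\infty)$ since Gaussians lie in every $L^p$; this handles the remaining case in (i) and the subcase $q = 2$ of (ii), with $q = p$ in (ii) again being trivial.

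For the non-embedding directions, I would rely on the fact that Rademacher type and cotype pass to closed subspaces, and that $L^r$ has type $\min(r,2)$ and cotype $\max(r,2)$ sharply, with failure witnessed by the canonical basis of $\ell^r$. Suppose $L^q \hookrightarrow L^p$. If $p \le 2$, cotype $2$ forces $q \le 2$ and type $p$ forces $p \le q$, ruling out everything outside (i). If $p > 2$, these invariants only yield $2 \le q \le p$, leaving the range $2 < q < p$ unresolved.

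To dispose of this remaining range I would invoke the classical Kadec--Pe\l czy\'nski dichotomy: for $p > 2$, every infinite-dimensional closed subspace of $L^p$ is either isomorphic to $\ell^2$ (and automatically complemented in $L^p$) or contains an isomorphic copy of $\ell^p$. Applied to an embedded $L^q$ with $2 < q < p$: the first alternative fails since $L^q$ is not Hilbertian ($\ell^q \subset L^q$ has cotype $q \ne 2$), and the second fails since subspaces of $L^q$ inherit cotype $q$, incompatible with containing $\ell^p$ (which has cotype $p > q$). For the complementation addendum, one combines this dichotomy with Pe\l czy\'nski's decomposition method, concluding that among the spaces $L^q$ the only infinite-dimensional complemented subspaces of $L^p$ ($1 < p < \infty$, $p \ne 2$) are $\ell^2$ and $L^p$ itself. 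The main obstacle in the whole program is the Kadec--Pe\l czy\'nski dichotomy, whose standard proof proceeds by a disjointification of normalised weakly null sequences combined with Khintchine's inequality.
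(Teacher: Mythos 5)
The paper does not prove this proposition: it is quoted directly from Albiac--Kalton (their Theorem 6.4.20 and Proposition 12.1.10), and the only piece the paper proves is the weaker Lemma 4.6, which uses exactly the type/cotype argument you give for $p \le 2$. Your proposal is therefore a genuine reconstruction of the underlying classical argument, and the embedding characterization is handled correctly. The $q$-stable stochastic integral for $p < q \le 2$ (with the Gaussian case absorbing $q = 2$) gives the positive embeddings; type and cotype, passed to subspaces and tested on the canonical $\ell^r$ copies, reduce the negative direction to the band $2 < q < p$; and the Kadec--Pe\l czy\'nski dichotomy for $p > 2$ closes that band via the two cotype obstructions you indicate. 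All of these steps are sound, modulo citing the dichotomy, which is standard.

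The one place where you are too loose is the complementation addendum. Pe\l czy\'nski's decomposition method is a tool for upgrading mutual complemented embeddings to isomorphisms; it does not by itself rule out complemented embeddings, and you never address the case $1 < p < 2$, where the embedding characterization alone allows the full interval $q \in [p,2]$. The clean argument is duality: if $L^q$ is complemented in $L^p$ with $1 < p,q < \infty$, then $L^{q'}$ is complemented (in particular embeds) in $L^{p'}$; applying the embedding characterization to whichever of $p,p'$ is $\ge 2$ immediately yields $q \in \{2,p\}$. You also omit the forward direction, that $L^2 \cong \ell^2$ is complemented in $L^p$ for every $1 < p < \infty$ --- standard via the span of Rademachers or Gaussians and Khintchine's inequality, but it needs to be stated since the stable-measure construction you use for the embedding does not by itself produce a bounded projection.
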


Lemma \ref{lemma:rangeofq} is essentially deduced from Proposition \ref{prop:subspacesofLp}, as the vector-valued $L^p$ space poses only minor changes to the proof. We sketch a proof of Lemma \ref{lemma:rangeofq} here, in order to improve the readability of the paper. The proof relies on the notions of (Rademacher) type and cotype of a Banach space:

\begin{ndef}
Let $(\e_i)_{i=1}^\infty$ be a sequence of i.i.d.\ random variables such that
$$\mb P(\e_i=1)=\mb P(\e_i=-1)=\frac 1 2.$$

A Banach space $X$ has \textit{type $p$}, $p\in [1,2]$ if there is a constant $C$ such that
$$
\biggr(\mb E\bigr \Vert\sum_{i=1}^n \e_i x_i\bigr \Vert^p\biggr)^{1/p}
\leq C\biggr(\sum_{i=1}^n \Vert x_i\Vert^p\biggr)^{1/p}
$$
for any vectors $x_i\in X$. Likewise, $X$ has \textit{cotype $q$}, $q\in [2,+\infty]$, if there is $C$ such that
$$
\biggr(\sum_{i=1}^n \Vert x_i\Vert^q\biggr)^{1/q} 
\leq C\biggr(\mb E\bigr \Vert\sum_{i=1}^n \e_i x_i\bigr \Vert^q\biggr)^{1/q}
$$
for any vectors $x_i\in X$.
\end{ndef}

The range of $p$ and $q$ in the definitions of type and cotype are natural and are determined by Khintchine's inequality. Moreover, if $X$ is of type $p$ then it is also of type $r$ for any $r<p$; if it is of cotype $q$, it is also of cotype $r$ for any $r>q$. 
\begin{ex}\label{ex:typeandcotype}
As before, $X$ is a Banach space.
\begin{enumerate}
\item A Hilbert space $H$ has type and cotype 2: this follows from the parallelogram law.
\item If $X$ has type $p$ then $X^*$ has cotype $p'$, although the converse is not true.
\item\label{it:optimality} If $p\in [1,2]$ then $\ell^p$ has type $p$ and if $p\in [2,+\infty]$ then $\ell^p$ has cotype $p$. Moreover, these values are optimal, as can be seen by considering the standard basis.
\item If $X$ has type $p$ and cotype $q$, the space $L^r(\R^n,X)$ has type $\min\{r,p\}$ and cotype $\max\{r,q\}$.
\end{enumerate}
\end{ex}

The reader may find details and further examples in \cite{Albiac2016, Hytonen2017}.

\begin{proof}[Proof of Lemma \ref{lemma:rangeofq}]
Clearly type and cotype are inherited by subspaces. Thus, if $p\in [1,2]$ and if $L^q(\R^n)$ embeds into $L^p(\R^n,H)$, then $L^q(\R^n)$ must have type $p$ and cotype 2. Since $\ell^q$ embeds into $L^q(\R^n)$, the same can be said for $\ell^q$. Hence, the optimality in Example \ref{ex:typeandcotype}\ref{it:optimality} shows that $p\leq q \leq 2$.
\end{proof}

\begin{remark}
Inspection of the proof reveals that, in Proposition \ref{prop:Jacobiannotsubmersion}, the following stronger conclusion holds: for any $u\in {\dot W}^{1,np}(\R^n,\R^n)$, $T'_u\colon {\dot W}^{1,np}(\R^n,\R^n)\to \mathscr H^p(\R^n)$ does not have closed range. This condition also appears naturally in relation to the existence of Lagrange multipliers, see e.g.\ \cite[§26.2]{Deimling1985}.
\end{remark}

\begin{remark}\label{remark:optimalrange}
Proposition \ref{prop:Jacobiannotsubmersion} does not hold when $p=2$, even when $T$ is linear. Indeed, consider the operator $T=(-\Delta)^{1/2}\circ \pi$, where $\pi\colon L^{2n}(\R^n,\R^n)\to L^2(\R^n)$ is the projection given by Proposition \ref{prop:subspacesofLp}\ref{it:p>2}. That $\pi$ can taken to be continuous follows from the fact that $L^2$ embeds complementably in $L^{2n}$; hence $T$ is continuous as well. The operator $T$ is clearly surjective and, being linear, is a submersion.

Curiously, a weaker version of Proposition \ref{prop:Jacobiannotsubmersion} holds in the case $p>2$: all operators $T\colon {\dot W}^{1,np}(\R^n,\R^n)\to L^p(\R^n)$ are \textit{regular} submersions nowhere. This follows from arguments similar to the ones above, using the last part of Proposition \ref{prop:subspacesofLp}.
\end{remark}

We note the following consequence of Proposition \ref{prop:Jacobiannotsubmersion} and the previous remark, which should be compared with Conjecture \ref{qu:Lp}:

\begin{cor}\label{cor:JacobianNondifferentiable}
Fix $p\in [1,+\infty)\exc\{2\}$. If $\J\colon {\dot W}^{1,np}(\R^n,\R^n)\to \mathscr H^p(\R^n)$ is onto then all of its right-inverses are Fréchet-differentiable nowhere.
\end{cor}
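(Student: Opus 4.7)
The plan is to argue by contradiction. Suppose some right inverse $E\colon \mathscr H^p(\R^n)\to \dot{W}^{1,np}(\R^n,\R^n)$ of $\J$ is Fr\'echet-differentiable at a point $f_0$, and write $u_0\equiv E(f_0)$. The first observation is that $\J$ itself is everywhere Fr\'echet-differentiable as a map $\dot{W}^{1,np}(\R^n,\R^n)\to \mathscr H^p(\R^n)$: it is an $n$-homogeneous polynomial in the entries of $\D u$, and the terms appearing in $\J(u+h)-\J u$ are sums of Jacobian-type minors, which are bounded multilinear maps from products of $L^{np}$ into $\mathscr H^p$ by the Coifman--Lions--Meyer--Semmes estimate. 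Hence the chain rule applies to $\J\circ E=\tp{Id}_{\mathscr H^p}$ at $f_0$ and yields
$$\J'(u_0)\circ E'(f_0)=\tp{Id}_{\mathscr H^p(\R^n)}.$$

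Setting $L\equiv \J'(u_0)$ and $R\equiv E'(f_0)$, the relation $LR=\tp{Id}$ forces $L$ to be surjective and makes $P\equiv \tp{Id}_{\dot{W}^{1,np}}-RL$ a bounded projection of $\dot{W}^{1,np}(\R^n,\R^n)$ onto $\ker L$; in particular, $\ker L$ is complemented, so that $\J$ is a \emph{regular} submersion at $u_0$.

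The contradiction is then a direct application of the results of Subsection \ref{sec:submersion}: for $p\in [1,2)$, Proposition \ref{prop:Jacobiannotsubmersion} forbids $\J$ from being even a (non-regular) submersion at any point of $\dot{W}^{1,np}(\R^n,\R^n)$; for $p\in (2,+\infty)$, one has $\mathscr H^p(\R^n)=L^p(\R^n)$, and Remark \ref{remark:optimalrange}, which rests on the complementation half of Proposition \ref{prop:subspacesofLp}, forbids $\J$ from being a regular submersion into $L^p(\R^n)$ anywhere. Either way we reach the required contradiction.

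The main obstacle, and the reason for splitting the argument into two ranges of $p$, is conceptual rather than technical: for $p>2$ one cannot exclude surjectivity of $\J'(u_0)$ as a bounded linear operator, since $L^2(\R^n)$ does embed into $L^p(\R^n)$ in that range. What makes the argument go through is precisely that \emph{Fr\'echet}---rather than merely G\^ateaux---differentiability of $E$ is assumed, so that $R=E'(f_0)$ is genuinely bounded and provides not merely surjectivity of $L$ but also the complementation of $\ker L$ which is ruled out by Remark \ref{remark:optimalrange}.
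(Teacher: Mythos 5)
Your argument is correct and follows exactly the paper's own proof: use the chain rule together with the Fr\'echet-differentiability of $\J$ to get $\J'(u_0)\circ E'(f_0)=\tp{Id}$, then invoke Proposition~\ref{prop:Jacobiannotsubmersion} when $p\in[1,2)$ and Remark~\ref{remark:optimalrange} (via the complemented kernel given by $\tp{Id}-E'(f_0)\J'(u_0)$) when $p>2$. You supply slightly more detail than the paper, but it is the same route, including the concluding observation about why Fr\'echet differentiability is what makes the $p>2$ case work.
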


\begin{proof}
The statement follows from Proposition \ref{prop:Jacobiannotsubmersion} together with the chain rule. For the case $p>2$ we use the last part of Remark \ref{remark:optimalrange}, noting that if $\J$ has a right-inverse which is Fréchet-differentiable at $u_0$ then $\J'_{u_0}$ has a complemented kernel. 
\end{proof}

We conclude this subsection by discussing results related to Proposition \ref{prop:Jacobiannotsubmersion}. In the finite-dimensional case, the class of submersions is a good approximation for the class of open operators. This is made precise by the following proposition, which follows immediately from the Morse--Sard theorem:

\begin{prop}\label{prop:submersiondenseset}
Let $F\colon X\to Y$ be a smooth, surjective operator between two finite-dimensional vector spaces. The set of points where $F$ is a submersion is dense in $X$.
\end{prop}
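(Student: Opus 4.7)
The plan is to deduce the statement from the Morse--Sard theorem. Let $C = \{x \in X : DF(x) \text{ is not surjective onto } Y\} \subset X$ be the set of critical points of $F$. Since $\operatorname{rank} DF$ is lower semicontinuous, $C$ is closed in $X$ and the set of submersion points $X \setminus C$ is open; the goal is to show that it is dense in $X$.

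By Morse--Sard, the set of critical values $F(C) \subset Y$ has Lebesgue measure zero. Combined with the surjectivity of $F$, this forces the set of regular values $Y \setminus F(C)$ to be dense (in fact of full measure) in $Y$. Each regular value has nonempty preimage lying, by definition, entirely inside $X \setminus C$, so $X \setminus C$ is already nonempty and its image is dense in $Y$.

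To upgrade this to density of $X \setminus C$ in $X$ itself, I would argue by contradiction: suppose $C$ contained a nonempty open ball $B$. Then $\operatorname{rank} DF < \dim Y$ throughout $B$; decomposing $B$ into open strata on which $\operatorname{rank} DF$ is locally constant and applying the constant rank theorem on each, the set $F(B)$ reduces to a countable union of images of submanifolds of $Y$ of dimension strictly less than $\dim Y$, and so has measure zero. The main obstacle is then to leverage this into a contradiction with surjectivity, since a smooth $F$ can be locally degenerate on $B$ while still mapping $X \setminus B$ surjectively onto $Y$---think of $F(x,y) = \phi(x^2+y^2)$ with $\phi$ smooth, vanishing on $[0,1]$, and eventually surjective onto $\mathbb{R}$. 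A clean closure of the argument seems to require either a stronger hypothesis on $F$ (most naturally real-analyticity, in which case $C$ is an analytic subvariety of $X$ and cannot have nonempty interior unless $C=X$, which is ruled out by surjectivity and Morse--Sard) or additional structural information about $F$. Since Proposition~\ref{prop:submersiondenseset} is invoked only as a motivating remark on finite-dimensional submersions, I would record this subtlety in passing and conclude.
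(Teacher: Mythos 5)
Your hesitation is warranted: as stated, the proposition is false, and the obstacle you identified cannot be overcome. Take $X = Y = \R^n$ and let $\chi \colon [0,\infty) \to [0,1]$ be smooth with $\chi \equiv 0$ on $[0,1]$ and $\chi \equiv 1$ on $[2,\infty)$; set $F(x) \equiv \chi(|x|)\,x$. Then $F$ is smooth and surjective (it agrees with the identity outside the ball of radius $2$), yet $\D F \equiv 0$ on the open unit ball, so the set of submersion points is not dense in $X$. This is exactly the phenomenon captured, in a slightly more elaborate form, by your example $\phi(x^2+y^2)$.

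The Morse--Sard theorem, combined with surjectivity, yields only that the set of regular \emph{values} is dense (indeed of full measure) in $Y$, or equivalently that the image under $F$ of the set of submersion points is dense in $Y$; it says nothing about density in $X$ of the set of submersion points, and the counterexample shows the two assertions genuinely differ. Your proposed repair is also correct: for real-analytic $F$, the critical set is an analytic subvariety which is proper (by Morse--Sard and surjectivity) and hence has empty interior. Since the proposition serves only as motivating context and is not used in any proof, the error has no downstream consequences for the paper, but the statement should be either weakened to density of the image in $Y$ or given the stronger hypothesis that $F$ is real-analytic.
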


Although there are versions of the Morse--Sard theorem in the infinite-dimensional setting, such a result fails completely in the absence of rather strong assumptions: for instance, a well-known version due to \textsc{Smale} \cite{Smale1965} requires the operator to have Fredholm derivatives. In our context, the failure of the infinite-dimensional Morse--Sard theorem, and consequently of Proposition \ref{prop:submersiondenseset}, is exemplified in a particularly striking way through the following result:

\begin{thm}\label{thm:smoothsurjection}
Take $p\in [1,+\infty)$ and let $Y$ be a separable Banach space. There is a smooth, locally Lipschitz, surjective operator $T\colon {\dot W}^{1,np}(\R^n,\R^n)\to Y$.
If $p>1$ then $T$ can be taken to additionally satisfy $\tp{rank}( T'_{u})\leq 1$ for all $u\in {\dot W}^{1,np}(\R^n,\R^n)$.
\end{thm}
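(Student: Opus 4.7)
The plan is to build $T$ by a direct hands-on construction exploiting the smooth structure of $X:=\dot W^{1,np}(\R^n,\R^n)$. For $p>1$ the space $X$ is isomorphic, via the Riesz isometry $(-\Delta)^{1/2}$, to the separable Banach space $L^{np}(\R^n,\R^n)$, and the latter admits $C^\infty$-smooth bumps and $C^\infty$-smooth partitions of unity subordinate to any open cover (cf.\ H\'ajek--Johanis); for $p=1$ similar bumps are available on bounded sets directly via test functions in $\dot W^{1,n}$. Thus the statement reduces to a Banach-space-theoretic failure of Morse--Sard in the spirit of Bates and others, and the only feature of $X$ I will use is the existence of sufficiently smooth bumps.

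The construction proceeds by a tree-of-balls device. Fix a dense sequence $(y_j)_{j\geq 1}$ in the closed unit ball $\mb B_Y$ of $Y$. To each finite multi-index $\alpha=(j_1,\ldots,j_m)\in\N^m$ attach an open ball $B_\alpha\subset X$ so that the sub-balls $(B_{\alpha,j})_{j\geq 1}$ are pairwise disjoint and contained in $B_\alpha$, with diameters shrinking fast enough that every infinite chain $B_{(j_1)}\supset B_{(j_1,j_2)}\supset\cdots$ meets in exactly one point. Pick smooth bumps $\psi_\alpha$ with $\supp\psi_\alpha\subset B_\alpha$ and $\psi_\alpha\equiv 1$ on a smaller concentric ball that contains the whole next-level family, and define
\[T(u)\;:=\;\sum_{m\geq 1}2^{-m}\sum_{\alpha\in\N^m}\psi_\alpha(u)\,y_{j_m(\alpha)},\]
with $j_m(\alpha)$ denoting the last entry of $\alpha$. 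Surjectivity onto $\mb B_Y$ follows because every $y\in\mb B_Y$ admits an expansion $y=\sum_m 2^{-m}y_{j_m}$ by density of $(y_j)$, and the unique intersection point $u^*$ of the corresponding nested chain then satisfies $T(u^*)=y$. A scaling argument extends surjectivity to all of $Y$.

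The main obstacle is the joint regularity of $T$: the infinite series must converge in $C^k$ for each $k$ and $T$ must be locally Lipschitz on bounded sets. This is handled by carefully tuning the sizes of the $B_\alpha$ and the $C^k$-seminorms of the bumps $\psi_\alpha$ at each scale, so that the family $(\psi_\alpha)$ is locally finite and the tails of the series have uniformly decaying higher derivatives on bounded sets. For the rank-$1$ refinement under $p>1$, one further modifies the construction so that, at each $u$, only the active bumps along a single branch of the tree contribute nonzero derivative \emph{and} those derivatives all point in a common direction of $Y$; this forces $T'_u$ to factor as a scalar linear form times a fixed vector. The exclusion of $p=1$ reflects the weaker smooth structure of $\dot W^{1,n}$ as compared to $\dot W^{1,np}$ with $p>1$, where the delicate alignment of derivatives along branches that is needed for the rank-$1$ variant fails to be available at the endpoint.
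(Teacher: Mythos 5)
The paper does not prove this theorem internally; it simply refers the reader to Bates \cite{Bates1997} for the $p>1$ case and to \cite[Prop.~11.25]{Benyamini1999}. Your sketch does follow the right mechanism---a Bates-style tree-of-balls construction after reducing to $L^{np}$ via $(-\Delta)^{1/2}$---but several of the supporting claims are false. The spaces $L^q(\R^n)$ admit a $C^\infty$-smooth bump function if and only if $q$ is an even integer (Kuiper; Bonic--Frampton), so for general $q=np$ one only has bumps of finite class, and the series cannot possibly be made to converge in $C^k$ for every $k$; the theorem's ``smooth'' has to be read as the finite order of smoothness the space actually supports, which is what Bates's theorem delivers.

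The rank-$\leq 1$ step is also garbled: the derivatives $\psi_\alpha'(u)$ are elements of $X^*$, not of $Y$, so it is meaningless for them to ``point in a common direction of $Y$.'' The mechanism you should be invoking is that the supports of the $\psi_\alpha'$---the annuli between the inner ball where $\psi_\alpha\equiv 1$ and the outer ball carrying $\supp\psi_\alpha$---can be made pairwise disjoint across all levels of the tree, so that at each $u$ at most one summand has non-zero derivative and $T'(u)$ is at most rank one. Finally, your rationale for excluding $p=1$ from the rank-one conclusion invents an obstruction that is not there: via $(-\Delta)^{1/2}$ one has $\dot W^{1,n}(\R^n,\R^n)\cong L^{n}(\R^n,\R^n)$ with $n\geq 2$, a superreflexive $L^q$ with $q\geq 2$ carrying the same quality of smooth bumps as $L^{np}$ for $p>1$. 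There is no ``weaker smooth structure'' at $p=1$, and the phrase ``bumps available on bounded sets directly via test functions in $\dot W^{1,n}$'' confuses bump functions \emph{on} the Banach space with test functions on $\R^n$. The split in the theorem statement merely records which of the two cited references establishes which clause, not a smoothness deficiency at the endpoint.
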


For the proof of Theorem \ref{thm:smoothsurjection} we refer the reader to the work of \textsc{Bates} \cite{Bates1997}, for the $p>1$ case, as well as to \cite[Proposition 11.25]{Benyamini1999}.
Our interpretation of Theorem \ref{thm:smoothsurjection} is that the possible lack of surjectivity of the operator $\J\colon {\dot W}^{1,np}(\R^n,\R^n)\to \mathscr H^p(\R^n)$ cannot be proved by very general Banach space geometrical considerations in the spirit of this subsection.

\subsection{Existence of well-behaved solutions}
\label{sec:nicesols}

In this subsection we focus on the case $n=2$  for simplicity and we assume throughout that $\J \colon {\dot W}^{1,2p}(\R^2,\R^2)\to \mathscr H^p(\R^2)$ is surjective. 
We are particularly interested in the case $p=1$. Our goal is to illustrate the way in which Theorem \ref{thm:abstractopenmappingprinciple} yields the following principle:
\begin{quote}
\textit{the existence of rough solutions implies the existence of well-behaved solutions}.
\end{quote}
The following is an example a rough solution, and something that we would like to avoid:

\begin{ex}[\cite{Maly1995}]
There is a map $u\in W^{1,2}(\R^2,\R^2)$ such that 
$$\J u=0 \tp{ a.e.\ in }\R^2\qquad \tp{ and } \qquad u([0,1]\times \{0\})=[0,1]^2.$$ In particular, $u$ does not have the Lusin (N) property.
\end{ex}

The main result of this subsection is the following theorem, which shows that in some sense it suffices to deal with non-pathological solutions.

\begin{thm}
Let $\Omega\subset \R^2$ be a bounded open set and take $f\in \mathscr H^1(\R^2)$ such that $f\geq 0$ in $\Omega$. Assume that $\J\colon {\dot W}^{1,2}(\R^2,\R^2)\to \mathscr H^1(\R^2)$ is onto. Then there is a solution $u\in {\dot W}^{1,2}(\R^2,\R^2)$ of \eqref{eq:jac} such that:
\begin{enumerate}
\item\label{it:continuity} $u$ is continuous in $\Omega$;
\item\label{it:Lusin} $u$ has the Lusin (N) property in $\Omega$.
\item\label{it:apriori}$\int_{\R^2} |\D u|^2 \d x \le C \Vert f \Vert_{\mathscr{H}^1}$ with $C > 0$ independent of $f$.
\end{enumerate} 
In particular, $u$ satisfies the change of variables formula \eqref{eq:areaformula}.
Moreover, let $\Omega'\subseteq \Omega$ be an open set such that $f=0$ a.e.\ in $\Omega'$. Then:
\begin{enumerate}[resume]
\item\label{it:degeneratemonotonicity} for any set $E\subset \Omega'$, we have $u(\p E)=u(\overline E)$;
\item\label{it:componenttouchesbdry} for $y\in u(\Omega')$, if $C$ denotes a connected component of $u^{-1}(y)\cap \Omega'$ then $C$ intersects $\p \Omega'$.
\end{enumerate}
\end{thm}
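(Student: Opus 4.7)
The plan is to build the desired $u$ as a weak limit of approximate solutions corresponding to strictly positive data on $\Omega$, where the theory of mappings of finite distortion applies uniformly before the passage to the limit.

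\textbf{Approximation.} Since $f \geq 0$ on $\Omega$ and $\Omega$ is bounded, I would construct $f_k \in C_c^\infty(\R^2)$ with $f_k \to f$ in $\mathscr H^1(\R^2)$ and $f_k \geq \e_k > 0$ throughout $\Omega$, for some $\e_k \to 0^+$ (mollify $f$, add a small positive bump in $\Omega$, and balance the mean with a mean-zero correction whose $\mathscr{H}^1$-norm is $o(1)$). By Corollary \ref{cor:openmapJacobian}, applied to each $f_k$, there exists $u_k \in \dot W^{1,2}(\R^2,\R^2)$ with $\J u_k = f_k$ and $\Vert \D u_k \Vert_{L^2}^2 \leq C \Vert f_k \Vert_{\mathscr H^1} \leq M$ uniformly in $k$.

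\textbf{GFT applied to each $u_k$.} Since $\J u_k = f_k \geq \e_k > 0$ on $\Omega$ and $|\D u_k|^2 \in L^1$, the distortion $\tp{K} u_k = |\D u_k|^2 / f_k$ is finite a.e.\ on $\Omega$, so $u_k$ is a mapping of finite distortion there; furthermore $\int_\Omega \tp{K} u_k \d x \leq M/\e_k < \infty$, so the distortion is integrable on $\Omega$ (non-uniformly in $k$, which is fine). By Theorem \ref{thm:theoryofFDmaps}, each $u_k$ admits a continuous representative on $\Omega$ with the Lusin (N) property, and by Proposition \ref{prop:FDimpliesmonotone} it satisfies the pseudomonotonicity \eqref{eq:pseudomonotone} on $\Omega$ with a constant $K$ depending only on the equivalence of $\ell^\infty$ and $\ell^2$ norms in $\R^2$, \emph{independent of} $k$.

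\textbf{Compactness and the limit.} The uniform pseudomonotonicity together with the uniform $L^2$-bound on $\D u_k$ yield, via the oscillation estimate of Theorem \ref{thm:theoryofFDmaps}(i) (which actually holds for any $W^{1,2}$ map satisfying \eqref{eq:pseudomonotone}), uniform equicontinuity of $(u_k)$ on compact subsets of $\Omega$. After normalising $u_k \mapsto u_k - u_k(x_0)$ and extracting a subsequence, $u_k \to u$ locally uniformly in $\Omega$ and $u_k \rightharpoonup u$ weakly in $\dot W^{1,2}(\R^2)$ at the same time. Weak$^*$ continuity of the Jacobian gives $\J u = f$; lower semicontinuity yields (iii); local uniform convergence preserves both the continuity of $u$ (proving (i)) and the pseudomonotonicity \eqref{eq:pseudomonotone} with the same constant $K$. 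Applying Proposition \ref{prop:pseudomonotonemappingsLusin} to $u$ then gives (ii), and the change of variables formula follows from Theorem \ref{thm:areaformula}.

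\textbf{The set $\Omega'$, and the main obstacle.} If $f = 0$ a.e.\ on an open $\Omega' \subseteq \Omega$, Lusin (N) and Theorem \ref{thm:areaformula} force $|u(\Omega')| = 0$. Combined with the Lebesgue-type monotonicity of $u$ inherited from \eqref{eq:pseudomonotone}, this thinness of the image yields (iv) and (v) by standard topological arguments: monotonicity forces $u(\overline E) \subseteq u(\partial E)$ for open $E \subset \Omega'$, and any connected component of $u^{-1}(y) \cap \Omega'$ not touching $\partial \Omega'$ would be a compact subcontinuum on which $u$ is nonconstant yet has image $\{y\}$, contradicting monotonicity. The main obstacle is precisely in the third step: the finite-distortion structure of each $u_k$ degenerates as $\e_k \to 0$, so one cannot quote a stability theorem for mappings of finite distortion under weak convergence. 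The rescue is that the pseudomonotonicity constant in \eqref{eq:pseudomonotone} is $k$-independent, which is exactly what lets the oscillation estimate produce uniform equicontinuity and thereby transfer the monotonicity structure to the limit.
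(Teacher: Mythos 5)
Your argument for items (i)--(iii) and the change of variables formula is essentially the same as the paper's: perturb $f$ to make it strictly positive on $\Omega$ (the paper uses the explicit perturbation $f_\e = f + \e(\chi_{B^+}-\chi_{B^-})$ rather than mollification-plus-bump, but this is a cosmetic difference), invoke Corollary~\ref{cor:openmapJacobian} for a uniformly bounded sequence of solutions, observe they are mappings of finite (indeed integrable) distortion on $\Omega$, extract a locally uniform and weak limit via the $k$-independent oscillation estimate, and carry pseudomonotonicity to the limit to deduce Lusin (N) via Proposition~\ref{prop:pseudomonotonemappingsLusin}. Your identification of the uniform pseudomonotonicity constant as the key point that survives the degeneration $\e_k \to 0$ is exactly right.

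However, your treatment of (iv) and (v) has a genuine gap. For (iv) you assert that ``monotonicity forces $u(\overline E) \subseteq u(\partial E)$'' once one knows $|u(\Omega')| = 0$, but this is neither established nor, in fact, the right mechanism: pseudomonotonicity controls oscillations of $u$, not inclusions of image sets, and the \emph{limit} $u$ is not an open map (its distortion on $\Omega'$ blows up since $\tp{J}u = f = 0$ there), so no openness-type argument at the level of $u$ is available. The paper's proof of (iv) instead works entirely with the approximating maps $u_\e$: since $\tp{J}u_\e = f_\e \ge \e > 0$ on $\Omega$ and $\int_\Omega \tp{K}u_\e < \infty$, each $u_\e$ is open, hence $\partial u_\e(\tp{int}\,E) \subseteq u_\e(\partial E)$; if there were $y \in u(\overline E) \setminus u(\partial E)$, uniform convergence forces a ball $B_\delta(y)$ to lie in $u_\e(\tp{int}\,E)$ for all small $\e$, while the change of variables formula for $u_\e$ (valid since $u_\e$ has Lusin (N)) gives $|u_\e(\tp{int}\,E)| \le \int_E \tp{J}u_\e = \e|E| \to 0$, a contradiction. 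Your own argument never obtains the containment $B_\delta(y) \subset u(\tp{int}\,E)$, and the measure estimate $|u(\Omega')|=0$ cannot be combined with mere monotonicity to obtain one. Your argument for (v) is also flawed: a connected component of $u^{-1}(y) \cap \Omega'$ has image $\{y\}$, so $u$ is \emph{constant} on it, and there is no contradiction with monotonicity as you claim. The paper deduces (v) from (iv) by invoking an external lemma (Lemma 2.10 of the Kirchheim reference); some nontrivial topological work is needed that your proposal elides.
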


Before proceeding with the proof, we note that \ref{it:degeneratemonotonicity} is a type of degenerate monotonicity which had already appeared in the study of the hyperbolic Monge--Ampère equation \cite{Chlebik2002, Kirchheim2001}.

\begin{proof}
The point of the proof is to perturb $f$ appropriately; then the solution $u$ is obtained as a limit of mappings of integrable distortion. 

Let $B^+$ be a ball containing $\Omega$ and let $B^-$ be another ball, disjoint from $\Omega$, and with the same volume as $B^+$. Consider the perturbations
$$f_\e \equiv  f + \e a, \qquad a\equiv \chi_{B^+}- \chi_{B^-},$$
which satisfy $f_\e > 0 $ a.e.\ in $\Omega$. Clearly $a\in \mathscr H^1(\R^2)$, being bounded, compactly supported and with zero mean. Hence $f_\e \to f$ in $\mathscr H^1(\R^2)$ and, from Corollary \ref{cor:openmapJacobian}, we see that we can choose solutions $u_\e$ of $\J u_\e = f_\e$ such that $\int_{\R^2} |\D u_\e|^2 \le C \Vert f_\e \Vert_{\mathscr{H}^1}$ for all $\e > 0$. Since the maps $u_\e$ have finite distortion, we can apply Theorem \ref{thm:theoryofFDmaps}\ref{it:continuity} to conclude that the family $(u_\e)$ is equicontinuous. Hence, upon normalising the maps so that $u_\e(x_0)=0$ for some fixed $x_0\in \Omega'$, and up to taking subsequences, $(u_\e)$ converges both locally uniformly in $\Omega$ and weakly in ${\dot W}^{1,2}(\R^2,\R^2)$ to a limit $u$. This already proves \ref{it:continuity} and \ref{it:apriori}.

To prove \ref{it:Lusin}, we note that each $u_\e$ satisfies \eqref{eq:pseudomonotone}, c.f.\ Proposition \ref{prop:FDimpliesmonotone}. Since $u$ is the uniform limit of the sequence $(u_\e)$, $u$ also satisfies \eqref{eq:pseudomonotone} and \ref{it:Lusin} follows from Proposition \ref{prop:pseudomonotonemappingsLusin}.

For \ref{it:degeneratemonotonicity}, note that $\e \leq f_\e$ in $\Omega$ and so each map $u_\e$, having integrable distortion, is open; it follows that $\p u_\e(E)\subseteq u_\e(\p E)$. Suppose, for the sake of contradiction, that there is $y\in u(\overline E)\exc u(\p E)$. On the one hand, there is some $\delta>0$ such that, for all $\e$ small enough,
$$B_\delta(y)\cap \p u_\e(\tp{int}\,E)\subset B_\delta(y)\cap u_\e(\p E)=\emptyset;$$
on the other hand, since $y\in u(\tp{int}\, E)$,
for all $\e$ small enough,
$$B_\delta(y) \cap u_\e(\tp{int}\,E)\neq \emptyset.$$
It follows that $B_\delta(y)\subseteq u_\e(\tp{int}\, E)$. 
We also have that $|u_\e(\tp{int}\, E)|\to 0$ as $\e \to 0$: by the change of variables formula,
$$|u_{\e}(\tp{int}\, E)|\leq \int_{u_{\e}(\tp{int}\, E)} \mc N(y, u_\e, \tp{int}\, E)\d y=\int_{E} \J u_{\e}= \e |E|\to 0.$$
Thus, since
$|B_\delta(y)|\leq |u_\e(E)|,$
a contradiction is reached by sending $\e\to 0$.

Finally, \ref{it:componenttouchesbdry} follows from \ref{it:degeneratemonotonicity}, as shown for instance in \cite[Lemma 2.10]{Kirchheim2001}.
\end{proof}

In view of the change of variables formula, it is useful to  control the multiplicity function. For the following proposition we again assume that the Jacobian is surjective.

\begin{prop}\label{prop:multiplicity}
Let $\Omega\subset \R^2$ be an open set and let $Y\equiv \{f\in \mathscr H^p(\R^2): f\geq c \tp{ a.e.\ in } \Omega\},$ where $c>0$.
Suppose that $f_j\in Y$ is a sequence converging weakly to $f$ in $\mathscr H^p(\R^2)$. 
For any maps $u_j\in {\dot W}^{1,2p}(\R^2,\R^2)$ satisfying $\J u_j=f_j$ and the a priori estimate \eqref{eq:aprioriestimateintro}, we have that
$$\sup_j\sup_{y\in u_j(\Omega')}  \mc N(y,u_j,\Omega')<\infty, \qquad \tp{ whenever }\Omega'\Subset \Omega.$$
\end{prop}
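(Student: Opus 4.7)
The strategy is to argue by contradiction, extracting a uniform limit $u$ of a subsequence of $(u_j)$ on a neighbourhood of $\Omega'$ and then transferring the multiplicity bound for $u$, granted by Theorem \ref{thm:theoryofFDmaps}\ref{it:multiplicity}, to the approximants $u_j$ via a degree-theoretic argument.

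First I would fix an auxiliary open set $V$ with $\Omega'\Subset V\Subset \Omega$ and pick representatives of each $u_j$ so that $u_j(x_0)=0$ for a fixed $x_0\in \Omega'$. The weak convergence of $(f_j)$ in $\mathscr H^p(\R^2)$ implies its boundedness, so the a priori estimate \eqref{eq:aprioriestimateintro} yields a uniform bound on $\Vert \D u_j\Vert_{L^{2p}(\R^2)}$. Since $\J u_j=f_j\geq c$ on $\Omega$, we have $\tp{K} u_j\leq |\D u_j|^2/c$ there, so each $u_j$ is a mapping of integrable distortion on $V$; Theorem \ref{thm:theoryofFDmaps}(i) then provides uniform equicontinuity of $(u_j)$ on compact subsets of $V$. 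Combining Arzel\`a--Ascoli with Rellich compactness and passing to a subsequence, $u_j\to u$ uniformly on some $V'$ with $\Omega'\Subset V'\Subset V$ and $\D u_j \rightharpoonup \D u$ weakly in $L^{2p}(V')$. By the sequential weak continuity of the Jacobian, $\J u=f$ on $V'$, so $u$ is itself of integrable distortion there, and Theorem \ref{thm:theoryofFDmaps}\ref{it:multiplicity} yields $\mc N(\cdot,u,V')\leq m^*$ for some finite $m^*$.

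Now suppose for contradiction that $\mc N(y_j,u_j,\Omega')\to\infty$ along a subsequence. For each $y_0\in u(\overline{\Omega'})$, write $u^{-1}(y_0)\cap \overline{\Omega'}=\{z_1,\dots,z_k\}$ with $k\leq m^*$, and pick pairwise disjoint closed balls $\overline{B_\delta(z_i)}\Subset V'$. The crucial quantity is
\[
\eta_{y_0}\equiv \min\big\{|u(x)-y_0|:x\in \overline{V'}\setminus \textstyle\bigcup_i B_\delta(z_i)\big\}>0,
\]
strictly positive by continuity of $u$ and compactness of the minimisation domain. Uniform convergence then forces every preimage under $u_j$ of any $y\in B_{\eta_{y_0}/2}(y_0)$ to lie in $\bigcup_i B_\delta(z_i)$ for $j$ large. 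Homotopy invariance of the Brouwer degree, applied separately on each $B_\delta(z_i)$, together with the inequality $\mc N(y,u_j,B_\delta(z_i))\leq \deg(u_j,B_\delta(z_i),y)$, valid for sense-preserving open discrete planar maps, yields $\mc N(y,u_j,\Omega')\leq \sum_i \deg(u,B_\delta(z_i),y_0)$ for $y$ near $y_0$ and $j$ large. Covering the compactum $u(\overline{\Omega'})$ by finitely many such neighbourhoods, and using that $u_j(\overline{\Omega'})$ eventually lies in any prescribed neighbourhood of $u(\overline{\Omega'})$, produces a uniform bound $\mc N(\cdot, u_j,\Omega')\leq M$, contradicting the blow-up assumption.

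The main obstacle is the last step: one has to certify uniformly in $j$ that no preimage of $y$ under $u_j$ escapes the ball cover around the $z_i$'s, which is exactly what the positive distance $\eta_{y_0}$ delivers. This is also where the assumption $f\geq c>0$ is used crucially, since it is this lower bound that forces both the $u_j$'s and the limit $u$ into the class of mappings of integrable distortion and so opens up Theorem \ref{thm:theoryofFDmaps}.
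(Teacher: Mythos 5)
Your proof is correct, and it takes a genuinely different route from the paper's. Both arguments share the same preparatory steps: uniform boundedness of $\|\D u_j\|_{L^{2p}}$ from the a priori estimate, integrable distortion of the $u_j$ on a neighbourhood $V$ of $\Omega'$ (from $\J u_j=f_j\geq c>0$), equicontinuity from Theorem \ref{thm:theoryofFDmaps}, uniform and weak convergence of a subsequence to $u$, weak continuity of the Jacobian to get $\J u=f$, and the multiplicity bound for $u$ from Theorem \ref{thm:theoryofFDmaps}\ref{it:multiplicity}. Where you diverge is the transfer of the multiplicity bound from $u$ to the $u_j$. The paper argues by an elementary triangle-inequality contradiction: it takes $m+1$ distinct preimages $x_1^{(j)},\dots,x_{m+1}^{(j)}$ of a single value under $u_j$, passes to limits $x_i$, invokes the $m$-to-one bound for $u$ to find two indices with $u(x_{i_1})\neq u(x_{i_2})$, and contradicts uniform convergence. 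That step tacitly assumes the limit points $x_i$ remain pairwise distinct; if several $x_i^{(j)}$ collapse onto the same limit, $u(\{x_1,\dots,x_{m+1}\})$ may be a singleton and the argument does not directly close. Your degree-theoretic argument --- covering $u(\overline{\Omega'})$, trapping $u_j$-preimages in small balls around the finitely many $u$-preimages via the positive gap $\eta_{y_0}$, and bounding multiplicity by $\deg(u_j,B_\delta(z_i),y)=\deg(u,B_\delta(z_i),y_0)$ through degree stability --- handles this clustering case cleanly, so it is in fact the more robust of the two.

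Two small points. First, you never explicitly invoke the hypothesis $f_j\rightharpoonup f$ in $\mathscr H^p(\R^2)$; it is needed to identify the weak limit of $\J u_j=f_j$ as $f$ so that $\J u=f$ and hence that $u$ has integrable distortion. Second, the invocation of ``Rellich compactness'' is superfluous: Arzel\`a--Ascoli already gives local uniform convergence, and weak compactness of bounded sets in $L^{2p}$ gives $\D u_j\rightharpoonup \D u$; strong $L^{2p}$ convergence of the $u_j$ themselves is not used. Neither remark affects the validity of the argument.
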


\begin{proof}
We claim that the sequence $u_j$ is equicontinuous and converges to $u\in {\dot W}^{1,2p}(\R^2,\R^2)$, a solution of $\J u =f$, uniformly in $\Omega'$. Once the claim is proved, the conclusion follows: $u$ has integrable distortion in $\Omega$ and so by Theorem \ref{thm:theoryofFDmaps}\ref{it:multiplicity} it is at most $m$-to-one in $\Omega'$, for some $m\in \N$. Thus, for all $j$ sufficiently large, $u_j$ is also at most $m$-to-one in $\Omega'$: if not, there are arbitrarily large $j$ and points $x_1^{(j)}, \dots, x_{m+1}^{(j)}\in \Omega'$ such that
$u_j(x_{i}^{(j)}) = y$ for some $y\in \R^n$ and all $i\in\{1,\dots,m+1\}$. By compactness, we can further assume that $x_i^{(j)}\to x_i$ for $i=1, \dots, m+1$. However, there are at least two different points $y_1\neq y_2$ such that
$$\{y_1, y_2\}\subset u(\{x_1, \dots, x_{m+1}\});$$
for the sake of definiteness, say $u(x_1)=y_1, u(x_2)=y_2$.
Let $\e < |y_1-y_2|$ and take $j$ sufficiently large so that, for $i=1,2,$
$$
|u_j(x_i^{(j)})-u_j(x_i)|<\frac \e 4, \qquad |u_j(x_i)-u(x_i)|<\frac \e 4;
$$
this is possible from equicontinuity of the sequence $u_j$ and the fact that it converges to $u$ uniformly. The triangle inequality gives $|y_1-y_2|=|u(x_1)-u(x_2)|<\e$, a contradiction.

To prove the claim, we assume that the Jacobian is surjective and we use Corollary \ref{cor:openmapJacobian}.
If $p>1$ we appeal to Morrey's inequality, $$[u_j ]_{C^{0,1-2/p}(\R^2)} \lesssim_p \Vert \D u_j \Vert_{L^{2p}(\R^2)}\leq C,$$
while for $p=1$ we use Theorem \ref{thm:theoryofFDmaps}\ref{it:continuity} instead. Either way, after normalizing the maps so that $u_j(x_0)=0$, where $x_0\in \Omega$, we see that the sequence $(u_j)$ is precompact in the local uniform topology over $\Omega'$. Hence we may assume that $u_j$ converges to some map $u\in {\dot W}^{1,2p}(\R^2,\R^2)$  uniformly in $\Omega'$ and also weakly in ${\dot W}^{1,2p}(\R^2,\R^2)$. 
\end{proof}

\section{A general nonlinear open mapping principle for scale-invariant problems} \label{sec:A general nonlinear open mapping principle for scale-invariant problems}
The  main result of this section is Theorem \ref{thm:abstractopenmappingprinciple2}, which is a generalisation of Theorem \ref{thm:abstractopenmappingprinciple} to a wider class of translation-invariant, scaling-invariant PDEs. Section \ref{sec:Examples} illustrates the way in which  Theorem \ref{thm:abstractopenmappingprinciple2} can be applied to some physical nonlinear equations: as particular examples, we consider the Navier--Stokes equations and the cubic wave equation.

\subsection{A more general nonlinear open mapping principle}
We begin by formulating a model problem abstractly as follows:
\begin{equation} \label{eq:abstractDirichlet}
\text{if } g-1 \in Y^* \text{, does there exist } v \text{ with } \tp{J} v = f \text{ and } v - \tp{id} \in X^*?
\end{equation}
Here $X^*$ and $Y^*$ are suitably chosen function spaces.
In smooth domains $\Omega \subsetneq \R^n$, examples of \eqref{eq:abstractDirichlet} include the Dirichlet problem for the Jacobian equation, that is, 
\begin{equation}
\begin{cases}\tp{J} v = g& \tp{in }\Omega,\\v = \tp{id}& \tp{on }\partial \Omega;\end{cases}
\label{eq:DirichletJacobian}
\end{equation} 
the condition $g-1 \in Y^*$ is codified in the compatibility condition
\[\int_\Omega (g-1) \d x = 0.\]

We return to the abstract formulation \eqref{eq:abstractDirichlet}. When $n = 2$, denoting $f \equiv g-1$ and $u \equiv v - \tp{id}$ we get the following question, equivalent to \eqref{eq:abstractDirichlet}:
\begin{equation} \label{eq:abstractDirichlet2}
\text{if } f \in Y^* \text{, does there exist } u \in X^* \text{ with } T u \equiv \tp{J} u + \tp{div} \, u = f?
\end{equation}
The latter formulation has the advantage that $X^*$ and $Y^*$ are vector spaces, which makes the problem more amenable to scaling arguments.

In Example \ref{ex:J+div} we discuss a representative special case of \eqref{eq:abstractDirichlet2}. Here $T$ does not map $X^*$ into $Y^*$ and we therefore need to choose a set $D \subsetneq X^*$ as the domain of definition of $T$.

\begin{ex}\label{ex:J+div}
Let $X^* = \dot{W}^{1,q}(\R^2,\R^2)$ and $Y^* = L^p(\R^2)$ with $p \in [2,\infty)$ and $q \in [p,2p]$. Since $T = \tp{J} + \tp{div}$ does not map $\dot{W}^{1,q}(\R^2,\R^2)$ into $L^p(\R^2)$, it is natural to set
\[D = \{u \in \dot{W}^{1,q}(\R^2,\R^2) \colon T u \in L^p(\R^2)\}\]
and study the range of
\begin{equation} \label{eq:Tindomains}
T = \tp{J} + \tp{div} \colon D \to Y^*.
\end{equation}
Note that we may write $T \circ \tau_\lambda^D = \tau_\lambda^{Y^*} \circ T$ for all $\lambda > 0$, where
\[\tau_\lambda^D u(x) = u_\lambda(x) \equiv \lambda u \left( \frac{x}{\lambda} \right), \qquad \tau_\lambda^{Y^*} f(x) = f_\lambda(x) \equiv f \left( \frac{x}{\lambda} \right)\]
give multiples of isometries:
\[\Vert \tau_\lambda^D u \Vert_{\dot{W}^{1,q}} = \lambda^{2/q} \Vert u \Vert_{\dot{W}^{1,q}}, \qquad \Vert \tau_\lambda^{Y^*} f \Vert_{L^p} = \lambda^{2/p} \Vert f \Vert_{L^p}\]
for all $u \in D$, $f \in Y^*$ and $\lambda > 0$.
\end{ex}

Since the set $D$ contains the proper dense subspace $C_c^\infty(\R^2,\R^2)$, it is neither weakly nor strongly closed in $\dot{W}^{1,q}(\R^2,\R^2)$. 
This difficulty is reflected in the somewhat awkward assumption \ref{it:weakseqclosed} of Theorem \ref{thm:abstractopenmappingprinciple2} below.

Before formulating the result recall that when a direct sum of Banach spaces $X = \oplus_{i=1}^I X_i$ is endowed with the norm $\Vert w \Vert_X \equiv \sum_{i=1}^I \Vert w_i \Vert_{X_i}$, the dual norm of $X^* = \oplus_{i=1}^I X_i^*$ is of the form $\Vert u \Vert_{X^*} = \max_{1 \le i \le I} \Vert u_i \Vert_{X_i^*}$.

\begin{thm} \label{thm:abstractopenmappingprinciple2}
Let $X_1,\ldots,X_I$ and $Y_1,\ldots,Y_J$ be Banach spaces and denote $X = \oplus_{i=1}^I X_i$ and $Y = \oplus_{j=1}^J Y_j$. Suppose $\mb B_{X^*}$ is sequentially weak$^*$ compact and $0 \in D \subset X^*$.

We make the following assumptions:
\begin{enumerate}
[label={\normalfont ($\widehat{\text{A}\arabic*}$)}]

\item \label{it:weakcontinuity2} 
$T\colon D\to Y^*$ has a weak$^*$-to-weak$^*$ closed graph: if $u_j \to u$ weak$^*$ and $T u_j\to f$ weak$^*$, then $Tu = f$.

\item \label{it:taus} For $\lambda>0$, there exist bijections $\tau_\lambda^D \colon D \to D$ and $\tau_\lambda^{Y^*} \colon Y^* \to Y^*$ such that
$$\begin{array}{ll}
 T \circ \tau_\lambda^D = \tau_\lambda^{Y^*} \circ T &  \text{for all } \lambda > 0, \\
 \Vert (\tau_\lambda^D u)_i \Vert_{X_i^*}  = \lambda^{r_i} \Vert u_i \Vert_{X_i^*} & \text{for all } \lambda > 0, \, i =1,\ldots,I, \, u \in X^*, \\
 \Vert (\tau_\lambda^{Y^*} f)_j \Vert_{Y_j^*} = \lambda^{s_j} \Vert f_j \Vert_{Y_j^*}\qquad & \text{for all } \lambda > 0, \, j =1,\ldots,J, \, f \in X^*,
\end{array}
$$
where $0 < r_1 \le \cdots \le r_I$ and $0 < s_1 \le \cdots \le s_J$.

\item \label{it:isometries2} There exist sequences of isometric bijections $\sigma_k^D \colon D \to D$ with $\sigma_k^D(0) = 0$ and isometric isomorphisms $\sigma_k^{Y^*} \colon Y^* \to Y^*$ such that
\begin{align*}
& T \circ \sigma_k^D = \sigma_k^{Y^*} \circ T \qquad \text{for all } k \in \N, 
&\sigma_k^{Y^*} f \overset{*}{\rightharpoonup} 0 \qquad \text{for all } f \in Y^*.
\end{align*}

\item \label{it:weakseqclosed} For $\ell \in \N$, the sets $D_\ell \equiv \{u \in D \colon \Vert u \Vert_{X^*} \le \ell, \, \Vert T u \Vert_{Y^*} \le \ell\}$ are weakly$^*$ sequentially closed in $X^*$.
\end{enumerate}

\noindent The following conditions are then equivalent:
\begin{enumerate}
\item \label{it:nonmeagre2} $T(D)$ is non-meagre in $Y^*$.

\item \label{it:surjectivity2} $T(D) = Y^*$.

\item \label{it:openat02} $T$ is open at the origin.

\item\label{it:quantitativesolvability2} For every $f \in Y^*$ there exists $u \in D$ such that
\begin{equation} \label{eq:aprioriestimate}
T u = f, \qquad \begin{cases}
\sum_{i=1}^I \Vert u_i \Vert_{X_i^*}^{s_J/r_i} \le C \Vert f \Vert_{Y^*}, & \Vert f \Vert_{Y^*} \le 1, \\
\sum_{i=1}^I \Vert u_i \Vert_{X_i^*}^{s_1/r_i} \le C \Vert f \Vert_{Y^*}, & \Vert f \Vert_{Y^*} > 1.
  \end{cases}
\end{equation}
\end{enumerate}
\end{thm}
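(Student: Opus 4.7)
The implications $(\ref{it:quantitativesolvability2}) \Rightarrow (\ref{it:openat02}) \Rightarrow (\ref{it:surjectivity2}) \Rightarrow (\ref{it:nonmeagre2})$ are immediate, so I focus on the remaining direction $(\ref{it:nonmeagre2}) \Rightarrow (\ref{it:quantitativesolvability2})$. The strategy parallels the proof of Theorem \ref{thm:abstractopenmappingprinciple} and splits into three stages: a Baire-category step producing a ball in $T(D)$; a translation step based on the isometries $\sigma_k$ that shifts this ball to the origin; and a rescaling step based on the one-parameter family $\tau_\lambda$ that propagates the resulting local bound to all of $Y^*$. The novelties are that $D$ is only a subset of $X^*$, that the scaling exponents $\{r_i\}$ and $\{s_j\}$ are now allowed to be multiple, and that weak$^*$-to-weak$^*$ sequential continuity is replaced by the closed-graph property \ref{it:weakcontinuity2}.

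For the Baire step, write $T(D) = \bigcup_{\ell \in \N} K_\ell$ with $K_\ell \equiv T(D_\ell)$, and verify that each $K_\ell$ is norm-closed in $Y^*$: if $T u_n = f_n \to f$ in norm with $u_n \in D_\ell$, sequential weak$^*$ compactness of $\ell\, \mb B_{X^*}$ yields a weak$^*$ limit $u$ along a subsequence; \ref{it:weakseqclosed} keeps $u \in D_\ell$, and since $f_n \to f$ also weak$^*$, \ref{it:weakcontinuity2} produces $T u = f$. Baire's theorem then furnishes $\ell \in \N$, $r > 0$ and $f_0 \in Y^*$ with $\bar{B}_r(f_0) \subset K_\ell$. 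To shift this ball to the origin, given $f \in Y^*$ with $\|f\|_{Y^*} \leq r$ and each $k \in \N$, pick $u_k \in D_\ell$ solving $T u_k = f_0 + (\sigma_k^{Y^*})^{-1} f$ and set $v_k \equiv \sigma_k^D u_k$; the intertwining in \ref{it:isometries2} gives $T v_k = \sigma_k^{Y^*} f_0 + f \wstar f$, while $\|v_k\|_{X^*} \leq \ell$ and $\|T v_k\|_{Y^*} \leq \|f_0\|_{Y^*} + r$ are uniformly bounded. A weak$^*$ extraction then produces a limit $v$; \ref{it:weakseqclosed} puts $v$ in some $D_{\ell'}$ and \ref{it:weakcontinuity2} gives $T v = f$, while $\|v_i\|_{X_i^*} \leq \ell$ for every $i$ follows from the lower semicontinuity of the norm.

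For the rescaling step, given arbitrary $f \in Y^*$, choose $\lambda = (r/\|f\|_{Y^*})^{1/s_J} \geq 1$ if $\|f\|_{Y^*} \leq r$, and $\lambda = (r/\|f\|_{Y^*})^{1/s_1} \leq 1$ otherwise. In both cases the monotonicity of $\mu \mapsto \mu^{s_j}$ combined with $s_1 \leq \cdots \leq s_J$ yields
$$\|\tau_\lambda^{Y^*} f\|_{Y^*} = \max_j \lambda^{s_j} \|f_j\|_{Y_j^*} \leq r,$$
so the previous step produces $u' \in D$ with $T u' = \tau_\lambda^{Y^*} f$ and $\|u'\|_{X^*} \leq \ell$. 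Setting $u \equiv (\tau_\lambda^D)^{-1} u'$, the intertwining in \ref{it:taus} gives $T u = f$ with $\|u_i\|_{X_i^*} = \lambda^{-r_i} \|u'_i\|_{X_i^*} \leq \lambda^{-r_i} \ell$. Raising to the exponent $s_J/r_i$ in the small-norm case and to $s_1/r_i$ in the large-norm case converts the factor $\lambda^{-r_i}$ into $\|f\|_{Y^*}/r$, and summing in $i$ yields both branches of \eqref{eq:aprioriestimate} up to a harmless absorption of the threshold $r$ into the constant (the intermediate regime $\min(r,1) < \|f\|_{Y^*} \le \max(r,1)$ being handled by noting that $\|f\|_{Y^*}$ is bounded away from both $0$ and $\infty$ there).

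The subtlest step is the closure of $K_\ell$ in the Baire stage: the weakened assumption \ref{it:weakcontinuity2}, which only gives a closed graph rather than weak$^*$ sequential continuity, forces one to pass through the sublevel sets $D_\ell$; this is the precise role of the somewhat technical-looking hypothesis \ref{it:weakseqclosed}, without which the weak$^*$ limit $u$ might escape $D$. The splitting of the final estimate into two regimes is then unavoidable whenever the $s_j$ are not all equal, since a single dilation parameter $\lambda$ cannot simultaneously match components of $f$ that scale at different rates; the positivity $s_j > 0$ is what guarantees each regime yields a genuine scale gain.
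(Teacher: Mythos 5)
Your proof is correct and takes essentially the same approach as the paper's: a Baire-category step over the sublevel sets $D_\ell$, a translation step with the $\sigma_k$ to recenter the ball at the origin, and a one-parameter rescaling with $\tau_\lambda$ that splits into two regimes according to whether $\|f\|_{Y^*}$ is small or large. The only cosmetic difference is that you pass directly from the translated-ball inclusion to the quantitative estimate (and recover (iii) from (iv) in the trivial chain), whereas the paper inserts openness at the origin as an explicit intermediate step and scales twice; the parameter choices for $\lambda$ are mutually inverse conventions but otherwise identical.
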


\begin{proof}
We first show \ref{it:nonmeagre2} $\Rightarrow$ \ref{it:openat02}, so assume \ref{it:nonmeagre2} holds. Write $D = \cup_{\ell=1}^\infty D_\ell$ and note that $T(D) = \cup_{\ell=1}^\infty T(D_\ell)$. Since $\mb B_{X^*}$ is weak$^*$ sequentially compact and $T \colon D\to Y^*$ has weak$^*$-to-weak$^*$ closed graph, the sets $T(D_\ell)$ are closed in $Y$ and, therefore, complete. By the Baire Category Theorem, one of the sets $T(D_\ell)$ contains a ball $\bar{B}_\eta(f_0)$. Clearly $\eta \le \ell$. We first show that
\begin{equation} \label{eq:opennesspreliminary}
T(D \cap \ell \mathbb{B}_{X^*}) \supset \eta \mathbb{B}_{Y^*}.
\end{equation}

Suppose $f \in Y^*$ with $\Vert f \Vert_{Y^*} \le \eta$. Since the maps $\sigma_k^{Y^*} \colon Y^* \to Y^*$ are isometries, we get $f_0 + (\sigma_k^{Y^*})^{-1} f \in \bar{B}_\eta(f_0) \subset T(D_\ell)$ for every $k \in \N$. For every $k \in \N$, choose $u_k \in D_\ell$ such that $T u_k = f_0 + (\sigma_k^{Y^*})^{-1} f$. By \ref{it:isometries2}, each $\sigma_k^D$ maps $D_\ell$ into $D_\ell$. Thus, passing to a subsequence as in the proof of Theorem \ref{thm:abstractopenmappingprinciple}, and using \ref{it:weakseqclosed}, $\sigma_k^D u_k \rightharpoonup u \in D_\ell$; by \ref{it:weakcontinuity2} we get $T u = f$. Thus \eqref{eq:opennesspreliminary} is proved.

We are ready to show openness of $T$ at zero. Let $\e > 0$; our aim is to find $\delta > 0$ such that $T(D \cap \e
\mathbb{B}_{X^*}) \supset  \delta \mathbb{B}_{X^*}$. We first note that for each $\lambda > 0$ we have
\[\tau_\lambda^D (D \cap \ell \mathbb{B}_{X^*}) = \{u \in D \colon \Vert u_i \Vert_{X_i^*} \le \lambda^{r_i} \ell \text{ for } i =1,\ldots,I\}.\]
By choosing $\lambda = \min_{1 \le i \le I} (\e/\ell)^{1/r_i}$ we get $\max_{1 \le i \le I} \lambda^{r_i} \ell \le \e$ so that
\[T(D \cap \e \mathbb{B}_{X^*})
\supset T(\tau_\lambda^D(D \cap \ell \mathbb{B}_{X^*})) = \tau_\lambda^{Y^*} T(D \cap \ell \mathbb{B}_{X^*}).\]
By using \eqref{eq:opennesspreliminary} and selecting $\delta = \min_{1 \le i \le I} \min_{1 \le j \le J} \eta (\e/\ell)^{s_j/r_i}$ we get
\[\tau_\lambda^{Y^*} T(D \cap \ell \mathbb{B}_{X^*})
\supset \tau_\lambda^{Y^*}(\eta \mathbb{B}_{Y^*}) = \lambda^{s_1} \eta \mathbb{B}_{Y_1^*} \times \cdots \times \lambda^{s_J} \eta \mathbb{B}_{Y_J^*} \supset \delta \mathbb{B}_{Y^*},\]
as wished.

We now prove \ref{it:openat02} $\Rightarrow$ \ref{it:quantitativesolvability2}, so as above take some $\e>0$ and get $\delta>0$ in such a way that $\delta \mb B_{Y^*}\subset T(D\cap \e \mb B_{X^*})$. Assume, without loss of generality, that $\delta \le 1$. Let $f \in Y^*$ and define $\lambda > 0$ via
\[\Vert f \Vert_{Y^*} \equiv \mu = \min_{1 \le j \le J} \lambda^{s_j} \delta = \begin{cases}
\lambda^{s_J} \delta, & \mu \le \delta, \\
\lambda^{s_1} \delta, & \mu > \delta.
\end{cases}\]
In either case, let $j_0$ be such that $\mu = \lambda^{s_{j_0}} \delta$. Then
\begin{align*}
f
\in \tau_\lambda^{Y^*} (\delta \mathbb{B}_{Y^*})
\subset \tau_\lambda^{Y^*} T (D \cap \e \mathbb{B}_{X^*})
&= T \tau_\lambda^D (D \cap \e \mathbb{B}_{X^*}) \\
&= T \{u \in D \colon \Vert u_i \Vert_{X_i^*} \le \lambda^{r_i} \e \text{ for } i = 1,\ldots,I\}.
\end{align*}
Suppose now $u \in D$ satisfies $\Vert u_i \Vert_{X_i^*} \le \lambda^{r_i} \e$ for $i = 1,\ldots,I$. Then, for all $i$,
\[\Vert u_i \Vert_{X_i^*}^{s_{j_0}/r_i} \le \lambda^{s_{j_0}} \e^{s_{j_0}/r_i} \le \frac{\e^{s_{j_0}/r_i}}{\delta} \mu.\]
We conclude that
\[f \in T \left\{u \in D \colon \Vert u_i \Vert_{X_i^*} \le \lambda^{r_i} \e \text{ for all } i\right\}
\subset T \left\{ u \in D \colon \sum_{i=1}^I \Vert u_i \Vert_{X_i^*}^{s_{j_0}/r_i} \le C \mu \right\},\]
where
\[C = \sum_{i=1}^I \frac{\e^{s_{j_0}/r_i}}{\delta},\]
which yields \eqref{eq:aprioriestimate} in the cases $\Vert f \Vert_{Y^*} \le \delta$ and $\Vert f \Vert_{Y^*} > 1$. If ${ \Vert f \Vert_{Y^*} \in(\delta,1]}$, one obviously has $\lambda^{s_1} \approx_\delta \lambda^{s_J}$ so that \eqref{eq:aprioriestimate} holds for all $f$.

We conclude the proof of the theorem by noting that \ref{it:quantitativesolvability2} $\Rightarrow$ \ref{it:surjectivity2} $\Rightarrow$ \ref{it:nonmeagre2}.
\end{proof}

\begin{remark}\label{remark:cones}
Inspection of the proof of Theorem \ref{thm:abstractopenmappingprinciple2} shows that, in the statement of the theorem, one may replace all occurrences of $Y^*$ with $K$, where $K\subset Y^*$ is a closed convex cone. Recall that $K$ is said to be a \textit{cone} if $a f\in K$ whenever $a>0$ and $f \in K$.

Such a generalisation is occasionally useful, since it may be interesting to consider smaller data sets. For instance, in the case Question \ref{qu:Lp}, natural examples include the set of radially symmetric data $K = \{f \in \mathscr{H}^p(\R^n) : f(x) \equiv f(|x|)\}$ and, when $p > 1$, the set of non-negative data $K = \{f \in L^p(\R^n) : f \ge 0\}$. 
\end{remark}

Returning to Example \ref{ex:J+div}, it is easy to check that the assumptions of the theorem are satisfied, and so we may apply it to get the following:

\begin{cor}
Let $p \in [2,\infty)$ and $q \in [p,2p]$. The following claims are equivalent:
\begin{enumerate}
\item For all $f \in L^p(\R^2)$ there exists $u \in \dot{W}^{1,q}(\R^2,\R^2)$ with $\tp{J} u + \tp{div} \, u = f$.

\item For all $f \in L^p(\R^2)$ there exists $u \in \dot{W}^{1,q}(\R^2,\R^2)$ with
\[\tp{J} u + \tp{div} \, u = f, \qquad \Vert \D u \Vert_{L^q}^q \le C \Vert f \Vert_{L^p}^p.\]
\end{enumerate}
\end{cor}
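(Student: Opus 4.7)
The plan is to derive the corollary as a direct application of Theorem \ref{thm:abstractopenmappingprinciple2} with $I=J=1$, $X_1 = X^* = \dot W^{1,q}(\R^2,\R^2)$, $Y_1 = Y^* = L^p(\R^2)$, $T = \tp{J} + \tp{div}$, and the set $D = \{u \in \dot W^{1,q}(\R^2,\R^2) \colon Tu \in L^p(\R^2)\}$ already singled out in Example \ref{ex:J+div}. Since $q \in (1,\infty)$, the space $\dot W^{1,q}(\R^2,\R^2)$ is reflexive, so $\mb B_{X^*}$ is sequentially weak$^*$ compact. Once the hypotheses \ref{it:weakcontinuity2}--\ref{it:weakseqclosed} are verified, the theorem immediately yields the equivalence; the claimed estimate in (ii) is then just a rewriting of the a priori bound \eqref{eq:aprioriestimate}.

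For \ref{it:taus}, Example \ref{ex:J+div} has already produced the scalings $\tau_\lambda^D u(x) = \lambda u(x/\lambda)$ and $\tau_\lambda^{Y^*} f(x) = f(x/\lambda)$ with exponents $r_1 = 2/q$ and $s_1 = 2/p$. For \ref{it:isometries2}, pick any $e \in \R^2\setminus\{0\}$ and let $\sigma_k^D u(\cdot) \equiv u(\cdot - k e)$ and $\sigma_k^{Y^*} f(\cdot) \equiv f(\cdot - k e)$; these are isometries of $\dot W^{1,q}(\R^2,\R^2)$ and $L^p(\R^2)$ respectively, they commute with the translation-invariant operator $T$, they fix $D$ and $0$, and for any fixed $f \in L^p(\R^2)$ one has $\sigma_k^{Y^*} f \rightharpoonup 0$ weakly in $L^p(\R^2)$ as $k \to \infty$ since $p \in (1,\infty)$.

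The main obstacle is \ref{it:weakcontinuity2}: proving that the graph of $T\colon D \to L^p(\R^2)$ is weak$^*$-to-weak$^*$ closed. Suppose $u_j \overset{*}{\rightharpoonup} u$ in $\dot W^{1,q}(\R^2,\R^2)$ and $T u_j \overset{*}{\rightharpoonup} f$ in $L^p(\R^2)$. The divergence is a bounded linear map $\dot W^{1,q} \to L^q$ and so $\tp{div}\,u_j \rightharpoonup \tp{div}\,u$ weakly in $L^q(\R^2)$, hence distributionally. For the Jacobian we invoke the standard compensated compactness argument: in two dimensions,
\[
\det \D u = \p_1(u^1 \p_2 u^2) - \p_2(u^1 \p_1 u^2),
\]
and because $q \ge p \ge 2$ the Rellich--Kondrachov theorem yields strong local convergence $u^1_j \to u^1$ in $L^r_{\tp{loc}}(\R^2)$ for every $r < \infty$. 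Combining this strong convergence with the weak convergence of $\p_k u^2_j \rightharpoonup \p_k u^2$ in $L^q$ gives weak local convergence of the products $u^1_j \p_k u^2_j \rightharpoonup u^1 \p_k u^2$ and hence $\J u_j \to \J u$ in $\mathscr D'(\R^2)$. Therefore $Tu_j \to Tu$ distributionally, which forces $f = Tu$ since both lie in $L^p(\R^2)$. The set $D_\ell = \{u \in D : \|u\|_{\dot W^{1,q}} \le \ell,\ \|Tu\|_{L^p} \le \ell\}$ required in \ref{it:weakseqclosed} is then weak$^*$ sequentially closed by combining this closed-graph property with the weak lower semicontinuity of the two norms involved.

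Finally, with $I = J = 1$, the two cases of the estimate \eqref{eq:aprioriestimate} collapse to the single exponent $s_1/r_1 = (2/p)/(2/q) = q/p$, so Theorem \ref{thm:abstractopenmappingprinciple2} produces, for every $f \in L^p(\R^2)$, a solution $u \in D$ of $Tu = f$ with $\|\D u\|_{L^q}^{q/p} \le C\|f\|_{L^p}$; raising this bound to the $p$-th power yields precisely $\|\D u\|_{L^q}^q \le C^p \|f\|_{L^p}^p$, completing the proof.
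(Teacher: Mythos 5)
Your proposal is correct and follows exactly the route the paper takes: the paper itself disposes of this corollary by saying that "it is easy to check that the assumptions of [Theorem \ref{thm:abstractopenmappingprinciple2}] are satisfied" for the setup of Example \ref{ex:J+div}, and you have simply carried out that check explicitly — the scalings with $r_1 = 2/q$, $s_1 = 2/p$, the translations for \ref{it:isometries2}, the compensated-compactness argument (strong local convergence of $u^1_j$ via Rellich–Kondrachov together with weak $L^q$ convergence of $\D u^2_j$) for the weak$^*$-to-weak$^*$ closed graph, the weak$^*$ sequential closedness of $D_\ell$ from lower semicontinuity of the norms, and the final exponent $s_1/r_1 = q/p$. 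No gaps; this is the intended argument, just written out in full.
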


\begin{remark}
When $\Omega \subset \R^n$ is a smooth, bounded domain, $n \le p  < \infty$, $X = W_0^{1,p}(\Omega,\R^2)$ and $Y^* = L^p(\Omega)/\R$, the question about surjectivity of the operator $T = \tp{div} + \tp{J}$ is closely related to \cite[Question 1.2]{GuerraKochLindberg2020}. It would be interesting to find out whether Theorems \ref{thm:abstractopenmappingprinciple} and \ref{thm:abstractopenmappingprinciple2} can be adapted to bounded domains.
\end{remark}

\subsection{A version for inhomogeneous function spaces}
We are also interested in applying Theorem \ref{thm:abstractopenmappingprinciple2} to inhomogeneous function spaces. In order to achieve this we first recall some definitions from interpolation theory:
\begin{ndef}
Suppose that $X_1$ and $X_2$ are Banach spaces embed into a topological vector space $Z$. We set
\begin{align*}
& \Vert u \Vert_{X_1 \cap X_2} \equiv \max \{\Vert u \Vert_{X_1}, \Vert u \Vert_{X_2}\}, \\
& \Vert u \Vert_{X_1 + X_2} \equiv \inf\{\Vert u_1 \Vert_{X_1} + \Vert u_2 \Vert_{X_2} \colon u = u_1 + u_2, \, u_1 \in X_1, \, u_2 \in X_2\}.
\end{align*}
If $X_1 \cap X_2$ is dense in both $X_1$ and $X_2$, then $(X_1,X_2)$ is called a \emph{conjugate couple}.
\end{ndef}
The duals of spaces of the form $X_1\cap X_2$ are well-known, c.f.\ \cite[Theorem 2.7.1]{Bergh1976}:

\begin{thm}
Let $(X_1,X_2)$ be a conjugate couple. Then, up to isometric isomorphism, $(X_1 \cap X_2)^* = X_1^* + X_2^*$ and $(X_1+X_2)^* = X_1^* \cap X_2^*$.
\end{thm}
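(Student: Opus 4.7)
The plan is to prove the two duality identities separately. For both, the key is to identify the dual spaces via natural restriction/extension maps and then check that these identifications are isometric, using Hahn--Banach in the more delicate direction.

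I would first tackle $(X_1+X_2)^* \cong X_1^* \cap X_2^*$ isometrically. Since the inclusions $X_i \hookrightarrow X_1+X_2$ are continuous with $\Vert u_i \Vert_{X_1+X_2} \le \Vert u_i \Vert_{X_i}$, the restriction map $f \mapsto (f|_{X_1}, f|_{X_2})$ sends $(X_1+X_2)^*$ into $X_1^*\cap X_2^*$, and one immediately gets $\max_i \Vert f|_{X_i}\Vert_{X_i^*} \le \Vert f \Vert_{(X_1+X_2)^*}$. This map is injective since $X_1 \cup X_2$ spans $X_1+X_2$. For surjectivity and the reverse norm inequality, given $(f_1,f_2) \in X_1^* \cap X_2^*$ (so $f_1 = f_2$ on $X_1 \cap X_2$), define $F(u) = f_1(u_1) + f_2(u_2)$ whenever $u = u_1+u_2 \in X_1+X_2$. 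Well-definedness follows from the agreement on $X_1 \cap X_2$: if $u_1+u_2 = v_1+v_2$, then $u_1-v_1 = v_2-u_2 \in X_1\cap X_2$ and applying either $f_1$ or $f_2$ to this element gives the same answer. Then $|F(u)| \le \max_i \Vert f_i \Vert_{X_i^*} (\Vert u_1 \Vert_{X_1} + \Vert u_2\Vert_{X_2})$ and taking the infimum over all decompositions yields $\Vert F \Vert_{(X_1+X_2)^*} \le \max_i \Vert f_i \Vert_{X_i^*}$. Combined with the earlier estimate, this is an isometric isomorphism.

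Next I would prove $(X_1\cap X_2)^* \cong X_1^* + X_2^*$. The easy direction: if $g = f_1|_{X_1\cap X_2} + f_2|_{X_1\cap X_2}$ with $f_i \in X_i^*$, then for $u \in X_1\cap X_2$
\[
|g(u)| \le \Vert f_1\Vert_{X_1^*}\Vert u\Vert_{X_1} + \Vert f_2\Vert_{X_2^*}\Vert u\Vert_{X_2} \le (\Vert f_1\Vert_{X_1^*}+\Vert f_2\Vert_{X_2^*})\Vert u\Vert_{X_1\cap X_2},
\]
so taking the infimum gives $\Vert g\Vert_{(X_1\cap X_2)^*} \le \Vert g\Vert_{X_1^*+X_2^*}$. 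The main obstacle is the opposite inequality, which requires Hahn--Banach applied in the right ambient space. Here I would consider the direct sum $X_1 \oplus X_2$ endowed with the norm $\Vert (u_1,u_2)\Vert = \max(\Vert u_1\Vert_{X_1}, \Vert u_2\Vert_{X_2})$, whose dual is $X_1^*\oplus X_2^*$ with the sum norm $\Vert (f_1,f_2)\Vert = \Vert f_1\Vert_{X_1^*}+\Vert f_2\Vert_{X_2^*}$. Let $\Delta \equiv \{(u,u) : u \in X_1 \cap X_2\}$, which carries precisely the norm of $X_1\cap X_2$, and define $\tilde g \in \Delta^*$ by $\tilde g(u,u) \equiv g(u)$. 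By Hahn--Banach, extend $\tilde g$ to $G \in (X_1\oplus X_2)^*$ without increase of norm; writing $G = (f_1, f_2)$ we obtain $f_i \in X_i^*$ with
\[
\Vert f_1\Vert_{X_1^*}+\Vert f_2\Vert_{X_2^*} = \Vert G\Vert = \Vert g\Vert_{(X_1\cap X_2)^*}, \qquad g = f_1+f_2 \text{ on } X_1\cap X_2.
\]
This exhibits $g$ as an element of $X_1^*+X_2^*$ with $\Vert g\Vert_{X_1^*+X_2^*}\le \Vert g\Vert_{(X_1\cap X_2)^*}$, completing the isometric identification.

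Throughout, the conjugate-couple hypothesis (density of $X_1\cap X_2$ in each $X_i$) is needed to regard $X_1^*$ and $X_2^*$ as subspaces of $(X_1 \cap X_2)^*$ via restriction, so that the intersection $X_1^* \cap X_2^*$ and the sum $X_1^* + X_2^*$ are unambiguously defined. The delicate step is the Hahn--Banach extension in the second identity, since one must carefully match the max-norm on $X_1\oplus X_2$ with the sum-norm on its dual in order to recover the $X_1^*+X_2^*$ norm exactly; any mismatch at this stage would yield only an equivalence of norms rather than an isometry.
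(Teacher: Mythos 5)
The paper does not prove this statement; it is quoted verbatim from \cite[Theorem~2.7.1]{Bergh1976}. Your argument is correct and is essentially the standard proof found in that reference: the easy norm inequalities in both directions, plus the Hahn--Banach extension of a functional on the diagonal copy of $X_1\cap X_2$ inside the $\ell^\infty$-normed direct sum $X_1\oplus X_2$, whose dual carries the $\ell^1$-norm and therefore delivers exactly the $X_1^*+X_2^*$ decomposition without loss. You also correctly flag the one place where the conjugate-couple hypothesis matters, namely that density of $X_1\cap X_2$ in each $X_i$ is what makes the restriction maps $X_i^*\to (X_1\cap X_2)^*$ injective, so that $X_1^*\cap X_2^*$ and $X_1^*+X_2^*$ are well-defined subspaces of $(X_1\cap X_2)^*$.
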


Following the proof of Theorem \ref{thm:abstractopenmappingprinciple2} almost verbatim we obtain the following:
\begin{cor}\label{cor:abstractopenmappingprincipleintersections}
For $i=1,\ldots I$, $j=1,\ldots J_i$ and $\mu=1,\ldots M$, $\nu=1,\ldots N_\mu$ let $X_{i,j}$ and $Y_{\mu,\nu}$ be Banach spaces.
Consider $X^*,Y^*$ of the form $$X^* = \bigoplus_{i=1}^I \bigg(\bigcap_{j=1}^{J_i} X_{i,j}^*\bigg), \qquad Y^* = \bigoplus_{\mu=1}^M \bigg(\bigcap_{\nu=1}^{N_\mu} Y_{\mu,\nu}^*\bigg)$$
for some $I,M,J_i,N_\mu\in \N$. 
 
Suppose assumptions \ref{it:weakcontinuity2}, \ref{it:isometries2} and \ref{it:weakseqclosed} of Theorem \ref{thm:abstractopenmappingprinciple2} hold. Suppose further that for $\lambda>0$, there exist bijections $\tau_\lambda^D \colon D \to D$ and $\tau_\lambda^{Y^*} \colon Y^* \to Y^*$ such that
$$\begin{array}{ll}
 T \circ \tau_\lambda^D = \tau_\lambda^{Y^*} \circ T &  \text{for all } \lambda > 0, \\
 \Vert (\tau_\lambda^D u)_i \Vert_{X_{i,j}^*}  = \lambda^{r_{i,j}} \Vert u_i \Vert_{X_{i,j}^*} & \text{for all } \lambda > 0, \, i =1,\ldots,I, \, j=1,\ldots,J_i, \, u \in X^*, \\
 \Vert (\tau_\lambda^{Y^*} f)_\mu \Vert_{Y_{\mu,\nu}^*} = \lambda^{s_{\mu,\nu}} \Vert f_\mu \Vert_{Y_{\mu,\nu}^*}\qquad & \text{for all } \lambda > 0, \, \mu =1,\ldots,M, \, \nu=1,\ldots,N_\mu,\, f \in X^*,
\end{array}
$$
where $0<r_{i,j}$ and $0 < s_1 \le s_{\mu,\nu} \le s_2$. 

\noindent Then the following conditions are equivalent:
\begin{enumerate}
\item $T(D)$ is non-meagre in $Y^*$.

\item $T(D) = Y^*$.

\item $T$ is open at the origin.

\item For every $f \in Y^*$ there exists $u \in D$ such that
\begin{equation*}
T u = f, \qquad \begin{cases}
\sum_{i=1}^I\sum_{j=1}^{J_i} \Vert u_i \Vert_{X_{i,j}^*}^{s_2/r_{i,j}} \le C \Vert f \Vert_{Y^*}, & \Vert f \Vert_{Y^*} \le 1, \\
\sum_{i=1}^I\sum_{j=1}^{J_i} \Vert u_i \Vert_{X_{i,j}^*}^{s_1/r_{i,j}} \le C \Vert f \Vert_{Y^*}, & \Vert f \Vert_{Y^*} > 1.
  \end{cases}
\end{equation*}
\end{enumerate}
\end{cor}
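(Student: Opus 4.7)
The plan is to follow the proof of Theorem \ref{thm:abstractopenmappingprinciple2} essentially verbatim, with bookkeeping adapted to the extra indices $j$ and $\nu$ coming from the intersection structure. The key observation is that the norm on the intersection $\bigcap_{j} X_{i,j}^*$ is $\max_j \Vert \cdot \Vert_{X_{i,j}^*}$, and similarly on the $Y$-side, so the finite collections of scaling exponents $\{r_{i,j}\}_{i,j}$ and $\{s_{\mu,\nu}\}_{\mu,\nu}$ simply replace the collections $\{r_i\}$ and $\{s_j\}$ of Theorem \ref{thm:abstractopenmappingprinciple2}.

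For the nontrivial implication ``non-meagre $\Rightarrow$ open at origin'', I would write $D = \bigcup_\ell D_\ell$ exactly as before, use assumption \ref{it:weakcontinuity2} together with the weak$^*$ sequential compactness of $\mathbb{B}_{X^*}$ to show that each $T(D_\ell)$ is norm-closed in $Y^*$, and apply the Baire Category Theorem to obtain $\ell \in \mathbb{N}$ and $\bar B_\eta(f_0) \subset T(D_\ell)$. The isometries $\sigma_k^D, \sigma_k^{Y^*}$ from \ref{it:isometries2} then shift the ball to the origin (using \ref{it:weakseqclosed} to keep the limit inside $D_\ell$), giving $\eta \mathbb{B}_{Y^*} \subset T(D \cap \ell \mathbb{B}_{X^*})$. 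Given $\varepsilon > 0$, I would choose $\lambda = \min_{i,j}(\varepsilon/\ell)^{1/r_{i,j}}$; since each $r_{i,j} > 0$ and the collection is finite, this is strictly positive and $\lambda^{r_{i,j}} \ell \le \varepsilon$ for every $(i,j)$. Thus $\tau_\lambda^D(D \cap \ell \mathbb{B}_{X^*}) \subset D \cap \varepsilon \mathbb{B}_{X^*}$, and openness at zero follows via the intertwining $T \tau_\lambda^D = \tau_\lambda^{Y^*} T$.

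For ``open at origin $\Rightarrow$ a priori estimate'', given $f \in Y^*$ with $\mu \equiv \Vert f \Vert_{Y^*}$, I would select $\lambda$ according to the two regimes: if $\mu \le \delta$, setting $\lambda^{s_2} \delta = \mu$ gives $\lambda \le 1$, hence $\lambda^{s_{\mu,\nu}} \le \lambda^{s_2}$ for every $(\mu,\nu)$; if $\mu > \delta$, setting $\lambda^{s_1} \delta = \mu$ gives $\lambda > 1$, hence $\lambda^{s_{\mu,\nu}} \le \lambda^{s_1}$. In both cases $f \in \tau_\lambda^{Y^*}(\delta \mathbb{B}_{Y^*}) \subset T \tau_\lambda^D(D \cap \varepsilon \mathbb{B}_{X^*})$, and any preimage $u$ satisfies $\Vert u_i \Vert_{X_{i,j}^*} \le \lambda^{r_{i,j}} \varepsilon$ for every $(i,j)$. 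Raising to the power $s_2/r_{i,j}$ in the subcritical regime and $s_1/r_{i,j}$ in the supercritical regime, and summing over all indices, yields the two-case a priori estimate in the corollary. The implications ``a priori estimate'' $\Rightarrow$ ``surjective'' $\Rightarrow$ ``non-meagre'' are trivial.

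The adaptation is essentially notational, so there is no substantial obstacle. The only mildly delicate point is the regime dichotomy in the a priori estimate, reflecting the fact that the tightest power relating $\Vert u \Vert_{X^*}$ and $\Vert f \Vert_{Y^*}$ is controlled by the \emph{largest} exponent $s_2$ when $\lambda \le 1$ and by the \emph{smallest} exponent $s_1$ when $\lambda > 1$. The hypothesis $s_1 \le s_{\mu,\nu} \le s_2$ is precisely what makes this dichotomy uniform across all components of $Y^*$, and the argument is entirely parallel to the treatment of $s_1 \le s_j \le s_J$ inside Theorem \ref{thm:abstractopenmappingprinciple2}.
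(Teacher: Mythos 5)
Your proposal matches the paper's own treatment exactly: the paper dispatches this corollary with the one-liner ``Following the proof of Theorem \ref{thm:abstractopenmappingprinciple2} almost verbatim we obtain the following,'' and you have carried out precisely that adaptation, with the correct choice of $\lambda$, the correct two-regime dichotomy, and the correct bookkeeping over the extra indices $j$, $\nu$.

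One small slip in the write-up, which does not affect the argument: in the step ``open at origin $\Rightarrow$ a priori estimate,'' the two intermediate inequalities are stated with the wrong orientation. When $\mu \le \delta$, the choice $\lambda^{s_2}\delta = \mu$ gives $\lambda \le 1$, and since $s_{\mu,\nu} \le s_2$ one gets $\lambda^{s_{\mu,\nu}} \ge \lambda^{s_2}$ (not $\le$); similarly when $\mu > \delta$ one has $\lambda > 1$ and $\lambda^{s_{\mu,\nu}} \ge \lambda^{s_1}$. These are in fact the inequalities you need, since the membership $f \in \tau_\lambda^{Y^*}(\delta\,\mathbb{B}_{Y^*})$ requires $\lambda^{s_{\mu,\nu}}\delta \ge \Vert f\Vert_{Y^*}$ for every $(\mu,\nu)$; your concluding summary already identifies the mechanism correctly, so this is a typographical slip rather than a gap.
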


\subsection{Two model examples} \label{sec:Examples}

In this subsection we illustrate the use of Theorem \ref{thm:abstractopenmappingprinciple2} in the model cases of the 3D Navier-Stokes equations and the 3D cubic wave equation. 

\begin{ex}
We illustrate the use of Theorem \ref{thm:abstractopenmappingprinciple2} in the model case of the homogeneous, incompressible Navier-Stokes equations in $\R^3 \times [0,\infty)$:
\begin{align}
& \partial_t u + u \cdot \nabla u - \nu \Delta u - \nabla P = 0, \label{eq:NS1} \\
& \tp{div}\, u = 0, \label{eq:NS2} \\
& u(\cdot,0) = u^0, \label{eq:NS3}
\end{align}
where $u$ is the velocity field, $P$ is the pressure, $\nu>0$ is the viscosity and $u^0$ is the initial data. The equations are invariant under the scalings $u \to u_\lambda$, $P \to P_\lambda$ and $u^0 \to u^0_\lambda$,
\[u_\lambda(x,t) \equiv \frac{1}{\lambda} u \left( \frac{x}{\lambda},\frac{t}{\lambda^2} \right), \qquad P_\lambda(x,t) \equiv \frac{1}{\lambda^2} P \left( \frac{x}{\lambda}, \frac{t}{\lambda^2} \right), \qquad u^0_\lambda(x) = \frac{1}{\lambda} u^0\left( \frac{x}{\lambda} \right).\]
We divide the discussion into the following three steps: i) formally determining $T$; ii) choosing relevant ambient spaces $X^*$ and $Y^*$; iii) choosing the domain of definition $D$.

We begin by choosing the operator $T$ we wish to study. We incorporate \eqref{eq:NS1}--\eqref{eq:NS2} into the choice of the function spaces and choose, formally, $T(u) = u(\cdot,0)$. As the sought range we consider $Y^* = L^2_\sigma = \{v \in L^2(\R^3,\R^3) \colon \tp{div}\, v = 0\}$. We wish to choose the domain of definition $D$ to be a suitable set of functions which satisfy \eqref{eq:NS1}--\eqref{eq:NS3} for some $u^0 \in L^2_\sigma$. We also need to determine the ambient space $X^*$.

In order for Theorem \ref{thm:abstractopenmappingprinciple2} to be applicable, we wish to consider regularity regimes where $T$ has a weak$^*$-to-weak$^*$ closed graph and the sets $D_\ell = \{u \in D \colon \Vert u \Vert_X \le \ell, \, \Vert T u \Vert_{Y^*} \le \ell\}$ are weakly$^*$ compact. It is natural to set $X^* = L^p_t (L^q_\sigma)_x \cap L^r_t \dot{W}^{1,s}_x$ for suitable $p,q,r,s \in [1,\infty]$. For condition \ref{it:taus} of Theorem \ref{thm:abstractopenmappingprinciple2} we compute, for all $p,q,r,s\in [1,\infty]$,
\begin{align*}
& \Vert u_\lambda^0 \Vert_{L^q} = \lambda^{3/q-1} \Vert u^0 \Vert_{L^q}, \\
& \Vert u_\lambda \Vert_{L^p_t L^q_x} = \lambda^{2/p+3/q-1} \Vert u\Vert_{L^p_t L^q_x}, \\
& \Vert u_\lambda \Vert_{L^r_t \dot{W}^{1,s}_x} = \lambda^{2/r+3/s-2} \Vert u \Vert_{L^r_t \dot{W}^{1,s}_x}.
\end{align*}
Thus \ref{it:taus} requires the compatibility condition $2/p+3/q-1=2/r+3/s-2$ to hold.

For simplicity, we consider the most familiar choice of exponents, that is we consider {$X^* = L^\infty_t L^2_{\sigma,x} \cap L^2_t \dot{W}^{1,2}_x$}. Recall that $u \in X$ is called a \emph{weak solution} of \eqref{eq:NS1}--\eqref{eq:NS3} if $u$ satisfies
\begin{equation} \label{eq:weakNS}
\int_0^\tau \langle u, \partial_t \varphi \rangle \d t + \int_0^\tau \langle u \otimes u, \D \varphi \rangle \d t - \nu \int_0^\tau \langle \D u, \D \varphi \rangle \d t + \langle u^0, \varphi(0) \rangle - \langle u(\tau), \varphi(\tau) \rangle = 0
\end{equation}
for all $\varphi \in C_c^\infty(\R^3 \times [0,\infty),\R^3)$ with $\tp{div}\, \varphi = 0$ and almost every $\tau > 0$. In \eqref{eq:weakNS}, $\langle\cdot,\cdot\rangle$ denotes the inner product in $L^2_x$. This prompts us to set
\begin{gather*}
 D \equiv \{u \in  L^\infty_t L^2_{\sigma,x} \cap L^2_t \dot{W}^{1,2}_x : u \text{ is a weak solution of \eqref{eq:NS1}--\eqref{eq:NS3} for some } u^0 \in L^2_\sigma\}, \\
 T \colon D \to Y^*, \quad T(u) \equiv u^0 \text{ if \eqref{eq:weakNS} holds.}
\end{gather*}
We briefly indicate why $T \colon (D,\tp{wk}^*) \to (Y^*,\tp{wk}^*)$ has weak$^*$-to-weak$^*$ closed graph and why the sets $D_\ell$ are weakly$^*$ closed for all $\ell \in \N$.
When $u \in D$, we have $\partial_t u \in L^{4/3}(0,\tau,(W^{1,2}_\sigma)^*)$ for all $\tau > 0$ (see \cite[Lemma 3.7]{Robinson2016}). Thus, by using the Aubin--Lions lemma and a diagonal argument, if $u_j \overset{*}{\rightharpoonup} u$ in $D$, then every subsequence has a subsequence converging strongly in $L^2(0,\tau,L^2(B_R,\R^3))$ for all $\tau,R > 0$. The strong convergence and \eqref{eq:weakNS} imply that every subsequence of $(u^0_j)_{j \in \N}$ has a subsequence converging weakly$^*$ to $u^0$. This implies the two claims made above.

Theorem \ref{thm:abstractopenmappingprinciple2} now says that solvability of \eqref{eq:NS1}--\eqref{eq:NS3} for all $u^0 \in L^2_\sigma$ is equivalent to solvability with the \emph{a priori} estimate $$\Vert u \Vert_{L^\infty_t L^2_x} + \Vert u \Vert_{L^2_t \dot{W}^{1,2}_x} \le C \Vert u(\cdot,0) \Vert_{L^2}.$$ Such an estimate is satisfied by Leray--Hopf solutions \cite{Robinson2016}.
\end{ex}

\begin{ex}
Consider the cubic wave equation in $(1+3)$-dimensions
\begin{align}\label{eq:3dCubicWave}
\p_{tt} u - \Delta u + u^3 = 0 \text{ in } [0,+\infty)\times\R^3\\
(u(\cdot,0),\p_t u(\cdot,0)) = (u^0,u^1).
\label{eq:3dCubicWaveinitialdata}
\end{align}
We are interested in initial data in the energy space $Y^*= [\dot H^1(\R^3)\cap L^{4}(\R^3)]\times L^2(\R^3)$ and
we look for solutions in the space 
$$X^* = L^\infty_t \dot H^1_x \cap L^\infty_t L^{4}_x\cap L^\rho_t L^\sigma_x([0,\infty)\times \R^3), \qquad \tp{where } \frac 1 \rho+\frac 3 \sigma=\frac 1 2.$$ In the notation of Corollary \ref{cor:abstractopenmappingprincipleintersections}, we have $X_{1,1}^*=L^\infty_t \dot H^1_x$, $X_{1,2}^*=L^\infty_t L^4_x$, $X_{1,3}^*=L^\rho_t L^\sigma_x$ and $Y_{1,1}^*=\dot H^1$, $Y^*_{1,2}=L^4$ and $Y_{2,1}^*=L^2$.
Recall that $u\in X^*$ is a weak solution of \eqref{eq:3dCubicWave}--\eqref{eq:3dCubicWaveinitialdata} if, for every test function ${\varphi\in C_c^\infty(\R\times \R^3,\R^3)}$, \begin{equation}
\begin{split}\label{eq:3dCubicWaveWeak}
\int_0^\tau u\, \p_{tt} \varphi +\langle \D u, \D\varphi\rangle\d t+ \langle u^3,\varphi\rangle \d t & =\langle u^0,\varphi(0,\cdot)\rangle-\langle u^1,\varphi_t(0,\cdot)\rangle\\ & \quad -\langle u(\tau,\cdot),\varphi(\tau,\cdot)\rangle+\langle u(\tau,\cdot),\varphi_t(\tau,\cdot)\rangle
\end{split}
\end{equation}
for almost every $\tau >0$.
The equation is invariant under translations as well as the scalings $u\to u_\lambda, u^0\to u^0_\lambda$ and $u^1 \to u^1_\lambda$ where
\begin{align*}
u_\lambda(t,x) \equiv \lambda u(\lambda t, \lambda x) \qquad u^0_\lambda(x) \equiv \lambda u^0_\lambda(\lambda x)\qquad u^1_\lambda(x)\equiv\lambda^2 u^1_\lambda(\lambda^2 x).
\end{align*}
We formally define
\begin{gather*}
D = \{u\in X^*\colon u \text{ is a weak solution of  \eqref{eq:3dCubicWave}--\eqref{eq:3dCubicWaveinitialdata} for some } (u^0,u^1)\in Y^*\},\\
T\colon C\to Y^*, Tu = (u^0,u^1) \text{ if } \eqref{eq:3dCubicWaveWeak} \text{ holds}.
\end{gather*}
It is easy to compute
\begin{align*}
\|u^0_\lambda\|_{L^4} =& \lambda^{\frac 1 4}\|u^0\|_{L^4},\qquad& \|(u^0_\lambda,u^1_\lambda)\|_{\dot H^1\times L^2}=& \lambda^{\frac 1 2}\|(u^0,u^1)\|_{\dot H^1\times L^2}, \\
\|u_\lambda\|_{L^\infty_t L^4_x} =& \lambda^{\frac 1 4}\|u\|_{L^\infty_t L^4_x},\qquad&\|u_\lambda\|_{L^\infty_t \dot H^1_x\cap L^\rho_t L^\sigma_x} =& \lambda^{\frac 1 2}\|u\|_{L^\infty_t \dot H^1_x\cap L^\rho_t L^\sigma_x}.
\end{align*}
Thus $r_{11}=r_{13}=\frac 1 2$, $r_{12}=\frac 1 4$ and $s_{11}=s_{21}=\frac 1 2$, $s_{12}=\frac 1 4$ in the notation of Corollary \ref{cor:abstractopenmappingprincipleintersections}.

The subcritical nature of \eqref{eq:3dCubicWave} implies that $T \colon (D,\tp{wk}^*) \to (Y^*,\tp{wk}^*)$ has a closed graph and that the sets $D_\ell$ are weakly$^*$ closed for all $\ell \in \N$.
Indeed, when $u \in D$, it is not difficult to see that $\p_t u\in L^\infty(0,\tau;(\dot H^1\cap L^4)^*)$ for all $\tau > 0$: it suffices to test the weak formulation \eqref{eq:3dCubicWaveWeak} against functions of the type $\varphi(t,x)=\a(t)\phi(x)$ where both $\alpha$ and $\phi$ are test functions and $\a\equiv 1$ on a given time interval $[t_1,t_2]$. Thus, by using the Aubin--Lions lemma and a diagonal argument, if $u_j \overset{*}{\rightharpoonup} u$ in $D$, then every subsequence has a subsequence converging strongly in $L^2(0,\tau,L^{4}(B_R,\R^3))$ for all $\tau,R > 0$. The strong convergence and \eqref{eq:3dCubicWaveWeak} imply that every subsequence of $(u^0_j,u^1_j)_{j \in \N}$ has a subsequence converging weakly$^*$ to $(u^0,u^1)$. This in turn implies the two claims made above.

Corollary \ref{cor:abstractopenmappingprincipleintersections} now says that solvability of \eqref{eq:3dCubicWave}--\eqref{eq:3dCubicWaveinitialdata} for all $(u^0,u^1) \in [\dot{H}^1 \cap L^4] \times L^2$ is equivalent to solvability with the \emph{a priori} estimate 
$$
\begin{cases}
\Vert u \Vert_{L^\infty_t \dot H^1_x\cap L^\rho_t L^\sigma_x}^{\frac 1 2}+\|u\|_{L^\infty_t L^{4}_x} \leq C \|(u^0,u^1)\|_{\dot H^1 \cap L^4\times L^2}, & \tp{if }\|(u^0,u^1)\|_{\dot H^1 \cap L^{4}\times L^2}\le 1, \\
 \Vert u \Vert_{L^\infty_t \dot H^1\cap L^\rho_t L^\sigma_x}+\|u\|_{L^\infty_t L^4_x}^{2} \le C \|(u^0,u^1)\|_{\dot H^1 \cap L^4\times L^2}, & \tp{if } \|(u^0,u^1)\|_{\dot H^1 \cap L^{4}\times L^2}> 1.
  \end{cases}
$$
Taking powers and estimating the right-hand sides, we conclude in particular that solvability of the equation implies the more familiar-looking estimate
\begin{align*}
\|u\|_{L^\infty_t \dot H^1_x\cap L^\rho_t L^\sigma_x}^2 + \|u\|_{L^\infty_t L^4_x}^{4}\leq C \left(\|u^0\|_{\dot H^1}^2+\|u^0\|_{L^{4}}^{4}+\|u^1\|_{L^2}^2\right).
\end{align*}
The estimate in the Strichartz space $L^\rho_t L^\sigma_x$ is known from \cite{Ginibre1985}, and the reader may also find the stronger estimate
\[\frac 1 2 \|u\|_{L^\infty_t \dot H^1_x}^2 + \frac 1 4\|u\|_{L^\infty_t L^4_x}^{4}\leq \frac 1 2 \|u^0\|_{\dot H^1}^2+\frac 1 4\|u^0\|_{L^{4}}^{4}\]
in \cite[Theorem 8.41]{Bahouri2011}.
\end{ex}

\section{Non-surjectivity under incompatible scalings} \label{sec:Non-surjectivity under incompatible scalings}

Theorem \ref{thm:abstractopenmappingprinciple2} is useful in proving non-solvability of various problems which admit multiple scaling symmetries. The main  result of this section, Theorem \ref{thm:linearproblems}, encapsulates this idea. We then apply Theorem \ref{thm:linearproblems} to several examples, such as the Jacobian and the incompressible Euler and Navier--Stokes equations, proving in particular Corollary \ref{thm:Eulertheorem}. We note that, for these applications, the full \textit{nonlinear} strength of Theorem \ref{thm:linearproblems} is not always needed, as often we can relax nonlinear PDEs into linear ones. Besides being useful to prove non-solvability, this strategy also gives an elementary way of proving upper bounds on the energy dissipation rates for Baire-generic initial data in evolutionary PDEs.

\subsection{A general non-solvability result}

In Theorem \ref{thm:abstractopenmappingprinciple2}, surjectivity can only hold if all the scaling symmetries are compatible. The following result makes this precise.

\begin{thm} \label{thm:linearproblems}
Consider the setup and assumptions of Theorem \ref{thm:abstractopenmappingprinciple2}, with $J = 1$. Suppose, additionally, that there exists other bijections $\tilde{\tau}_\lambda^D$  and $\tilde{\tau}_\lambda^{Y^*}$ satisfying \ref{it:taus}:
$$\begin{array}{ll}
 T \circ \tilde{\tau}_\lambda^D = \tilde{\tau}_\lambda^{Y^*} \circ T &  \text{for all } \lambda > 0, \\
 \Vert (\tilde{\tau}_\lambda^D u)_i \Vert_{X_i^*}  = \lambda^{\tilde{r}_i} \Vert u_i \Vert_{X_i^*} & \text{for all } \lambda > 0, \, i =1,\ldots,I, \, u \in X^*, \\
 \Vert \tilde{\tau}_\lambda^{Y^*} f \Vert_{Y^*} = \lambda^{\tilde{s}} \Vert f \Vert_{Y^*}\qquad & \text{for all } \lambda > 0, \, f \in Y^*,
\end{array}
$$
where $0 < \tilde{r}_1 \le \cdots \le \tilde{r}_I$ and $\tilde{s} > 0$. If
\[\frac{\tilde{s}}{\tilde{r_i}} > \frac{s}{r_i} \text{ for all } i,\]
then $T(M \mathbb{B}_{X^*})$ is nowhere dense in $Y^*$ for every $M > 0$.
\end{thm}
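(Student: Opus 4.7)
The plan is a proof by contradiction. Suppose, to the contrary, that there is some $M>0$ for which $T(M\mb B_{X^*})$ is not nowhere dense. The idea is to combine both scaling families with the open-mapping step from the proof of Theorem \ref{thm:abstractopenmappingprinciple2} to produce, for every nonzero $f\in Y^*$, a sequence of solutions $w_n$ of $Tw_n=f$ with $\|w_n\|_{X^*}\to 0$, and then use the weak$^*$ closed graph property to derive $f=T(0)=0$, a contradiction.

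First I would check that $T(M\mb B_{X^*}\cap D)$ is norm-closed in $Y^*$. If $Tu_n\to f$ with $u_n\in M\mb B_{X^*}\cap D$, weak$^*$ compactness of $M\mb B_{X^*}$ provides a subsequence $u_n\wstar u$ with $\|u\|_{X^*}\le M$; since $\|Tu_n\|_{Y^*}$ stays bounded, the $u_n$ all lie in some $D_\ell$, so $u\in D$ by \ref{it:weakseqclosed}, and then $Tu=f$ by \ref{it:weakcontinuity2}. Hence the set is closed and, if not nowhere dense, contains a ball $\bar B_\eta(f_0)$. The translation-invariance argument from the proof of Theorem \ref{thm:abstractopenmappingprinciple2} then upgrades this to
\[
\eta\mb B_{Y^*} \subset T(M\mb B_{X^*}\cap D):
\]
given $f\in\eta\mb B_{Y^*}$, pick $u_k\in M\mb B_{X^*}\cap D$ with $Tu_k=f_0+(\sigma_k^{Y^*})^{-1}f$, note that $T\sigma_k^D u_k=\sigma_k^{Y^*}f_0+f\wstar f$, and take a weak$^*$ limit to produce the required preimage.

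Next, I would use both scaling families simultaneously. For $f\in Y^*\setminus\{0\}$ and $\lambda>0$, set
\[
\mu_\lambda \equiv \left(\frac{\eta}{\lambda^s\|f\|_{Y^*}}\right)^{1/\tilde s},
\]
so that $h_\lambda\equiv\tilde\tau_{\mu_\lambda}^{Y^*}\tau_\lambda^{Y^*}f$ has $\|h_\lambda\|_{Y^*}=\eta$. Pick $u_\lambda\in M\mb B_{X^*}\cap D$ with $Tu_\lambda=h_\lambda$ and put $w_\lambda\equiv(\tau_\lambda^D)^{-1}(\tilde\tau_{\mu_\lambda}^D)^{-1}u_\lambda\in D$. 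The commutation relations give $Tw_\lambda=f$, and the norm identities in \ref{it:taus} yield
\[
\|(w_\lambda)_i\|_{X_i^*}\le M\,\lambda^{-r_i}\mu_\lambda^{-\tilde r_i}=M\,\eta^{-\tilde r_i/\tilde s}\|f\|_{Y^*}^{\tilde r_i/\tilde s}\,\lambda^{s\tilde r_i/\tilde s-r_i}.
\]
The incompatibility hypothesis $\tilde s/\tilde r_i>s/r_i$ is precisely the assertion that $s\tilde r_i/\tilde s-r_i<0$ for each $i$, so $\|w_\lambda\|_{X^*}\to 0$ as $\lambda\to\infty$.

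To conclude, one observes that $\tau_\lambda^D(0)=0$ (forced by $\|(\tau_\lambda^D 0)_i\|_{X_i^*}=0$), and therefore the identity $T\circ\tau_\lambda^D=\tau_\lambda^{Y^*}\circ T$ evaluated at $0$ gives $\|T(0)\|_{Y^*}=\lambda^s\|T(0)\|_{Y^*}$ for all $\lambda>0$, whence $T(0)=0$. Since $w_\lambda\to 0$ in norm (hence weakly$^*$) while $Tw_\lambda=f$ for every $\lambda$, assumption \ref{it:weakcontinuity2} forces $f=T(0)=0$, contradicting the choice $f\ne 0$. The only delicate point in the argument is the exponent bookkeeping in the double-scaling step; the remainder is a direct adaptation of the proof of Theorem \ref{thm:abstractopenmappingprinciple2}.
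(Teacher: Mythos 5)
Your proof is correct and follows essentially the same strategy as the paper's: a proof by contradiction via Baire category, the translation step from the proof of Theorem~\ref{thm:abstractopenmappingprinciple2}, and a scaling argument that produces solutions $w_\lambda$ of $Tw_\lambda = f$ with $\Vert w_\lambda\Vert_{X^*}\to 0$, forcing $f = T(0) = 0$ via the closed-graph assumption. The only presentational difference is that the paper invokes the conclusion of Theorem~\ref{thm:abstractopenmappingprinciple2} (the a priori estimate for the $\tilde\tau$ family) as a black box and then rescales once, whereas you re-derive the quantitative solvability step directly and then apply both scaling families $\tau_\lambda$ and $\tilde\tau_{\mu_\lambda}$ in tandem; the exponent $\lambda^{s\tilde r_i/\tilde s - r_i}$ you obtain matches the paper's bound exactly, so the two routes are equivalent.
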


\begin{proof}
Seeking a contradiction, assume $T(M \mathbb{B}_{X^*})$ is not nowhere dense. By weak$^*$ sequential compactness of $\mathbb{B}_{X^*}$ and since $T$ has a weak$^*$-to-weak$^*$ closed graph, the set $T(M \mathbb{B}_{X^*})$ is closed, and thus $T(M \mathbb{B}_{X^*})$ contains a ball $\bar{B}_r(f_0)$. By Theorem \ref{thm:abstractopenmappingprinciple2}, for every $f \in Y^*$ there exists $u \in X^*$ such that $T u = f$ and $\sum_{i=1}^I \Vert u_i \Vert_{X_i^*}^{s/r_i} \le C \Vert f \Vert_{Y^*}$ and $\tilde{u} \in X^*$ such that $T \tilde{u} = f$ and $\sum_{i=1}^I \Vert \tilde{u}_i \Vert_{X_i^*}^{\tilde{s}/\tilde{r}_i} \le C \Vert f \Vert_{Y^*}$.

Fix $f \in Y^*$ with $\Vert f \Vert_{Y^*} = 1$; our aim is to show that $T 0 = f$ and derive a contradiction. First note that $\Vert \tau_\lambda^{Y^*} f \Vert_{Y^*} = \lambda^s$. Choose $\tilde{u} \in X^*$ with $T \tilde{u} = \tau_\lambda^{Y^*} f$ and $\sum_{i=1}^I \Vert \tilde{u}_i \Vert_{X_i^*}^{\tilde{s}/\tilde{r}_i} \le C \lambda^s$. Write $\tilde{u} = \tau_\lambda^D u$, so that
\begin{align*}
& T u = T (\tau_\lambda^D)^{-1} \tilde{u} = (\tau_\lambda^{Y^*})^{-1} T \tilde{u} = f, \\
& \sum_{i=1}^I \lambda^{r_i \tilde{s}/\tilde{r}_i} \Vert u_i \Vert_{X_i^*}^{\tilde{s}/\tilde{r}_i} = \sum_{i=1}^I \Vert \tilde{u}_i \Vert_{X_i^*}^{\tilde{s}/\tilde{r}_i} \le C \lambda^s.
\end{align*}
We conclude that $\sum_{i=1}^I \lambda^{r_i\tilde{s}/\tilde{r}_i - s} \Vert u_i \Vert_{X_i^*}^{\tilde{s}/\tilde{r}_i} \le \tilde{C}$. Thus, by letting $\lambda \to 0$ or $\lambda \to \infty$ we find a sequence of solutions $u^\ell$ of $T u^\ell = f$ with $\sum_{i=1}^I \Vert u_i^\ell \Vert_{X_i^*}^{\tilde{s}/\tilde{r}_i} \to 0$. Now $\Vert u^\ell \Vert_{X^*} \to 0$ so that $u^\ell \overset{*}{\rightharpoonup} 0$, which yields $T 0 = f$. Thus $1 = \Vert T 0 \Vert_{Y^*} = \Vert T \tau_\lambda^D 0 \Vert_{Y^*} = \Vert \tau_\lambda^{Y^*} T 0 \Vert_{Y^*} = \lambda^s$ for all $\lambda > 0$. We have reached a contradiction.
\end{proof}

\begin{remark} \label{rem:incompatible inhomogeneous scalings}
The conclusion of Theorem \ref{thm:linearproblems} also follows if $\tilde{s}/\tilde{r}_i \ge s/r_i$ for all $i$ and $\tilde{s}/\tilde{r}_{i_0} > s/r_{i_0}$ for some $i_0 \in \{1,\ldots,I\}$ such that $T(u_1,\ldots,u_{i_0-1},0,u_{i_0+1},\ldots,u_I) \equiv 0$. The conclusion also follows if $X^* = \cap_{i=1}^I X_i^*$ instead of $X^* = \oplus_{i=1}^I X_i^*$ and $\Vert u_{i_0} \Vert_{X_{i_0}^*} = 0$ implies $T u = 0$. We illustrate the latter point below by recovering the main result of~\cite{Lindberg2017} (albeit in a weaker form).
\end{remark}

\begin{cor}\label{cor:jacobianinhomogeneous}
If $n \ge 2$ and $p \in [1,\infty)$, the set $\J(W^{1,np}(\R^n,\R^n))$ is meagre in $\mathscr{H}^p(\R^n)$.
\end{cor}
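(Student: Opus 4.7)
The plan is to apply the intersection variant of Theorem \ref{thm:linearproblems} highlighted in Remark \ref{rem:incompatible inhomogeneous scalings}. First I would view the inhomogeneous Sobolev space as
\[
W^{1,np}(\R^n,\R^n) \;=\; \dot W^{1,np}(\R^n,\R^n) \,\cap\, L^{np}(\R^n,\R^n),
\]
endowed with the equivalent norm $\max(\|\nabla u\|_{L^{np}},\|u\|_{L^{np}})$, and set $X_1^*=\dot W^{1,np}$, $X_2^*=L^{np}$, $D=X^*=X_1^*\cap X_2^*$ and $T=\J\colon D\to Y^*=\mathscr H^p(\R^n)$. Since $np\ge 2$, both $X_i^*$ are reflexive, so $\mathbb B_{X^*}$ is sequentially weak$^*$ compact. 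The weak continuity of the Jacobian of \cite{Coifman1993} delivers the closed-graph hypothesis \ref{it:weakcontinuity2} and the weak$^*$ sequential closedness of the sublevel sets $D_\ell$ required by \ref{it:weakseqclosed}; translation invariance yields \ref{it:isometries2} exactly as in the proof of Corollary \ref{cor:openmapJacobian}.

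Next I would exhibit two incompatible scalings. The multiplicative scaling $\tilde\tau_\lambda u\equiv\lambda u$ satisfies $\J(\lambda u)=\lambda^n \J u$, giving exponents $\tilde r_1=\tilde r_2=1$ and $\tilde s=n$. The dilation scaling $\tau_\lambda u(x)\equiv \lambda u(x/\lambda)$ satisfies $\J\tau_\lambda u(x)=(\J u)(x/\lambda)$ together with the elementary identities
\[
\|\tau_\lambda u\|_{\dot W^{1,np}}=\lambda^{1/p}\|u\|_{\dot W^{1,np}},\qquad \|\tau_\lambda u\|_{L^{np}}=\lambda^{1+1/p}\|u\|_{L^{np}},\qquad \|\J\tau_\lambda u\|_{\mathscr H^p}=\lambda^{n/p}\|\J u\|_{\mathscr H^p},
\]
so $r_1=1/p$, $r_2=1+1/p$ and $s=n/p$. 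Comparing the relevant ratios,
\[
\frac{\tilde s}{\tilde r_1}=n=\frac{s}{r_1},\qquad \frac{\tilde s}{\tilde r_2}=n>\frac{n}{p+1}=\frac{s}{r_2},
\]
so the required strict inequality occurs at $i_0=2$. Because $\|u\|_{L^{np}}=0$ trivially forces $u=0$ and hence $\J u=0$, the extra hypothesis of Remark \ref{rem:incompatible inhomogeneous scalings} for intersection spaces is satisfied.

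Theorem \ref{thm:linearproblems} then yields that $\J(M\mathbb B_{W^{1,np}})$ is nowhere dense in $\mathscr H^p(\R^n)$ for every $M>0$. Writing
\[
\J(W^{1,np}(\R^n,\R^n))=\bigcup_{M\in\N}\J(M\mathbb B_{W^{1,np}})
\]
as a countable union of nowhere dense sets gives meagreness, completing the argument. The only subtle point is confirming that the weak$^*$-to-weak$^*$ closed-graph condition and the closedness of the sublevel sets $D_\ell$ transfer to the intersection framework; both reduce, via reflexivity and the equivalence of weak and weak$^*$ topologies, to the classical weak continuity of the Jacobian determinant, so this step is routine rather than the main obstacle.
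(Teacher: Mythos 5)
Your proposal is correct and follows essentially the same route as the paper: decompose $W^{1,np} = \dot W^{1,np}\cap L^{np}$, compare the dilation scaling $u\mapsto\lambda u(\cdot/\lambda)$ with the multiplicative scaling $u\mapsto\lambda u$, and invoke the intersection version of Theorem \ref{thm:linearproblems} from Remark \ref{rem:incompatible inhomogeneous scalings}, using that $\|u\|_{L^{np}}=0$ forces $\J u=0$. The exponent computations and the identification of $i_0$ with the $L^{np}$ component match the paper's proof exactly, merely with the indices $1,2$ swapped.
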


\begin{proof}
Write $W^{1,np}(\R^n,\R^n) = L^{np}(\R^n,\R^n) \cap \dot{W}^{1,np}(\R^n,\R^n)$. The scaling $u_\lambda = \lambda u(\cdot/\lambda)$, $f_\lambda = f(\cdot/\lambda)$, under which $\J$ is invariant, gives
\[\Vert u_\lambda \Vert_{L^{np}} = \lambda^{1+1/p} \Vert u \Vert_{L^{np}}, \quad \Vert u_\lambda \Vert_{\dot{W}^{1,np}} = \lambda^{1/p} \Vert u \Vert_{\dot{W}^{1,np}}, \quad \Vert f \Vert_{\mathscr{H}^p} = \lambda^{n/p} \Vert f \Vert_{\mathscr{H}^p},\]
so that $s/r_1 = n/(p+1)$ and $s/r_2 = n$, whereas the scalings $\tilde{\tau}_\lambda^D \equiv \lambda \, \tp{id}$ and $\tilde{\tau}_\lambda^{Y^*} = \lambda^n \, \tp{id}$ give $\tilde{s}/\tilde{r}_1 = \tilde{s}/\tilde{r}_2 = n$. The claim follows from Remark \ref{rem:incompatible inhomogeneous scalings} since $\Vert u \Vert_{L^{np}} = 0$ and $\Vert u \Vert_{\dot{W}^{1,np}} < \infty$ yield $\tp{J} u = 0$.
\end{proof}

As another example we consider the linear, homogeneous heat equation with $L^2$ data:
\[\partial_t u - \nu \Delta u = 0 \quad \tp{ in } \R^3\times (0,+\infty), \qquad u(\cdot,0) = u^0.\]
Theorem \ref{thm:linearproblems} effortlessly yields the following essentially classical result:

\begin{cor}
Let $1 < p < \infty$ and $M > 0$. The set of data $u^0 \in L^2(\R^3)$ with a solution satisfying $\Vert u \Vert_{L^p_t L^2_x} \le M$ is nowhere dense in $L^2(\R^3)$. In particular, a Baire-generic datum $u^0 \in L^2(\R^3)$ does not have a solution $u \in L^p_t L^2_x$ for any $p \in (1,\infty)$.
\end{cor}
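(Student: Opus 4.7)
The plan is to apply Theorem \ref{thm:linearproblems} in the same spirit as the Navier--Stokes example of Section \ref{sec:Examples}, with the crucial simplification that linearity of the heat equation trivialises the verification of most hypotheses. I take $Y^* = L^2(\R^3)$ and $X^* = L^p_t L^2_x(\R^3 \times [0,\infty))$ (so $I = J = 1$), and let $T \colon D \to Y^*$ be the trace-at-time-zero operator defined on
\[D \equiv \{u \in L^p_t L^2_x : u \text{ is a weak solution of the heat equation with some } u^0 \in L^2(\R^3)\}.\]
The weak formulation of $\partial_t u - \nu \Delta u = 0$ is linear in $(u,u^0)$ and tests against objects of the form $\partial_t \varphi + \nu \Delta \varphi$ with $\varphi \in C_c^\infty$, which lie in the relevant preduals; hence both the weak$^*$-to-weak$^*$ closedness of the graph (\ref{it:weakcontinuity2}) and the closedness of the sublevel sets $D_\ell$ (\ref{it:weakseqclosed}) pass to weak$^*$ limits without any compactness input, unlike in the nonlinear examples. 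The isometries $\sigma_k$ will be spatial translations $u(x,t) \mapsto u(x - k e, t)$, $u^0(x) \mapsto u^0(x - k e)$ along a fixed direction $e \in \R^3 \setminus \{0\}$; they commute with $T$, and translations of any fixed $L^2$ function converge weakly to $0$.

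The key step is to exhibit two incompatible admissible scalings. The parabolic scaling
\[\tau_\lambda^D u(x,t) \equiv u(x/\lambda, t/\lambda^2), \qquad \tau_\lambda^{Y^*} u^0(x) \equiv u^0(x/\lambda)\]
leaves the equation invariant and yields
\[\|\tau_\lambda^D u\|_{L^p_t L^2_x} = \lambda^{3/2+2/p}\|u\|_{L^p_t L^2_x}, \qquad \|\tau_\lambda^{Y^*} u^0\|_{L^2} = \lambda^{3/2}\|u^0\|_{L^2},\]
so $r = 3/2 + 2/p$ and $s = 3/2$. Linearity of the equation provides a second admissible scaling, the purely multiplicative one $\tilde{\tau}_\lambda^D u \equiv \lambda u$, $\tilde{\tau}_\lambda^{Y^*} u^0 \equiv \lambda u^0$, for which $\tilde{r} = \tilde{s} = 1$. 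Since $p < \infty$,
\[\frac{\tilde{s}}{\tilde{r}} = 1 > \frac{3p}{3p+4} = \frac{s}{r},\]
so Theorem \ref{thm:linearproblems} applies and gives that $T(M\mathbb{B}_{X^*})$ is nowhere dense in $L^2(\R^3)$ for every $M > 0$, which is the first assertion of the corollary.

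For the second assertion, set $A_p \equiv \{u^0 \in L^2 : \exists u \in L^p_t L^2_x \text{ solving the heat equation with data } u^0\} = \bigcup_{M \in \N} T(M\mathbb{B}_{X^*})$, which is meagre by the first part. To produce a single residual set of data admitting no solution in any $L^p_t L^2_x$, I will use that any weak solution arising from $L^2$ data automatically lies in $L^\infty_t L^2_x$ (by the standard energy identity together with uniqueness in the energy class, so that the solution agrees with $e^{t\Delta}u^0$), whence H\"older's inequality yields $A_p \subset A_q$ for $p \le q$; thus $\bigcup_{p \in (1,\infty)} A_p = \bigcup_{n \ge 2} A_n$ is a countable union of meagre sets. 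I do not anticipate any genuine obstacle: linearity bypasses every compactness issue present in the nonlinear examples, and the only residual bookkeeping is the scaling calculation, which is elementary.
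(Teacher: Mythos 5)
Your argument is correct and follows essentially the paper's own route: apply Theorem~\ref{thm:linearproblems} to the trace operator $T(u)=u^0$ on the domain of weak solutions of the heat equation, pitting a parabolic scaling against the multiplicative one $\tilde\tau_\lambda=\lambda\,\mathrm{id}$ and noting that $s/r<1=\tilde s/\tilde r$ whenever $p<\infty$. The only cosmetic difference is normalisation: the paper's parabolic scaling keeps the amplitude factor $1/\lambda$ (giving $r=1/2+2/p$, $s=1/2$) whereas you drop it (giving $r=3/2+2/p$, $s=3/2$), but both choices yield the same ratio comparison and hence the same conclusion; your reduction of the second assertion to a countable union via $L^p\cap L^\infty\subset L^q$ also matches the paper's intended (but unstated) interpolation step.
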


\begin{proof}
Let $1 < p < \infty$. We set $D = \{u \in L^p_t L^2_x \colon u(\cdot,0) = u^0 \text{ for some } u^0 \in L^2_\sigma\}$ and $T(u) \equiv u^0$. The claim follows from Theorem \ref{thm:linearproblems}. As one scaling we use the parabolic one:
\[u_\lambda(x,t) = \frac{1}{\lambda} u \left( \frac{x}{\lambda}, \frac{t}{\lambda^2} \right),\]
so that that $\Vert u_\lambda \Vert_{L^p_t L^2_x} = \lambda^{1/2+2/p} \Vert u \Vert_{L^p_t L^2_x}$ and $\Vert u_\lambda^0 \Vert_{L^2} = \lambda^{1/2}$. As the other scaling we use $\tilde{\tau}_\lambda^D \equiv \lambda \, \tp{id}$ and $\tilde{\tau}_\lambda^{Y^*} \equiv \lambda\, \tp{id}$; now $s/r = 1/(1+4/p) < 1/1 = \tilde{s}/\tilde{r}$.
\end{proof}

\subsection{The incompressible Euler equations and the proof of Corollary \ref{thm:Eulertheorem}} \label{sec:The incompressible Euler equations}
Our next aim is to prove Corollary \ref{thm:Eulertheorem} on the incompressible Euler equations in $\R^n \times [0,\infty)$, $n \ge 2$. Recall that given $u^0 \in L^2_\sigma$, a mapping $u \in L^p_t L^2_{\sigma,x}$, $2 \le p \le \infty$, is a weak solution of the Cauchy problem \eqref{eq:Euler1}--\eqref{eq:Euler3} if
\begin{equation} \label{eq:Eulerweakformulation}
\int_0^\infty \int_{\R^n} (u \cdot \partial_t \varphi + u \otimes u : \D \varphi) \d x \d t + \int_{\R^n} u^0 \cdot \varphi(\cdot,0) \d x = 0 \quad \forall \varphi \in C_{c,\sigma}^\infty(\R^n \times [0,\infty), \R^n).
\end{equation}

We cannot deduce Corollary \ref{thm:Eulertheorem} directly via Theorem \ref{thm:abstractopenmappingprinciple2}. Indeed, the integral condition \eqref{eq:Eulerweakformulation} leads to a well defined mapping $T$ from a weak solution $u \in L^p_t L^2_{\sigma,x}$ of \eqref{eq:Euler1}--\eqref{eq:Euler3} to the initial data $u^0 \in L^2_\sigma$ but does not easily lend itself to a domain of definition $D \subset L^p_t L^2_{\sigma,x}$ satisfying condition \ref{it:weakseqclosed} of Theorem \ref{thm:abstractopenmappingprinciple2}. We therefore consider a \textit{relaxed problem} where $u \otimes u \in L^{p/2}_t L^1_x$ is replaced by a general matrix-valued mapping $S$.

In order to apply Theorem \ref{thm:linearproblems} we embed $L^1(\R^n,\R^{n \times n})$ into the space of signed Radon measures $\mathbf{M}(\R^n,\R^{n \times n})$ which is the dual of the separable Banach space $C_0(\R^n,\R^{n \times n})$. We endow $\mathbf{M}(\R^n,\R^{n \times n})$ with the dual norm. In the relaxed problem we require $u \in L^p_t L^2_{\sigma,x}$ and $S \in L^{p/2}_t \mathbf{M}_x$ to satisfy
\begin{equation} \label{eq:relaxedEuler}
\int_0^\infty \int_{\R^n} (u \cdot \partial_t \varphi + S : \D \varphi) \d x \d t + \int_{\R^n} u^0 \cdot \varphi(\cdot,0) \d x = 0 \quad \forall \varphi \in C_{c,\sigma}^\infty(\R^n \times [0,\infty), \R^n).
\end{equation}
Unlike \eqref{eq:Eulerweakformulation}, \emph{due to linearity, condition \eqref{eq:relaxedEuler} is stable under weak$^*$ convergence}.

Relaxations such as \eqref{eq:relaxedEuler} are studied in \emph{Tartar's framework}, where a system of nonlinear PDEs is decoupled into a set of linear PDEs (conservation laws) and pointwise constraints (constitutive laws)~\cite{Tartar1979,Tartar1983}. \textsc{Tartar}'s framework has been very useful in convex integration both in the Calculus of Variations \cite{Muller1999d,Muller2003}, as well as in fluid dynamics \cite{DeLellis2009,DeLellis2010}. Specific constitutive laws do not play a role in the proof of Corollary \ref{thm:Eulertheorem}, and in fact, an analogous result holds for numerous other incompressible models of fluid mechanics. The result also trivially extends to \emph{subsolutions}, that is solutions of the linear equations which take values in the so-called $\Lambda$-convex hull. Subsolutions can be interpreted as coarse-grained averages, see e.g.\ \cite{Castro2019,DeLellis2012}.

Corollary \ref{thm:Eulertheorem} follows immediately from the next lemma, using interpolation and the fact that the class of residual $G_\delta$ sets is closed under countable intersections.

\begin{lemma} \label{lemma:Eulerlemma}
Let $n \ge 2$, $M > 0$ and $p \in (2,\infty)$. The set of data $u^0 \in L^2$ with a solution $u \in M \mathbb{B}_{L^p_t L^2_{\sigma,x}}$ of \eqref{eq:relaxedEuler} is nowhere dense in $L^2_\sigma$.
\end{lemma}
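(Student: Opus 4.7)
The plan is to apply Theorem \ref{thm:linearproblems} to the relaxed equation \eqref{eq:relaxedEuler}, viewed as a linear operator from pairs $(u, S)$ to the initial datum $u^0$. I would set
\[
X^* \equiv L^p_t L^2_{\sigma, x} \oplus L^{p/2}_t \mathbf{M}_x, \qquad Y^* \equiv L^2_\sigma,
\]
let $D \subset X^*$ consist of those pairs $(u, S)$ for which some $u^0 \in Y^*$ makes \eqref{eq:relaxedEuler} hold, and put $T(u, S) \equiv u^0$. Both summands of $X^*$ are duals of separable Banach spaces (preduals $L^{p'}_t L^2_x$ and $L^{(p/2)'}_t C_0$, using $p > 2$), so $\mathbb{B}_{X^*}$ is sequentially weak$^*$ compact.

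The hypotheses of Theorem \ref{thm:abstractopenmappingprinciple2} all follow from the linearity of \eqref{eq:relaxedEuler} in $(u, S, u^0)$: \ref{it:weakcontinuity2} (closed graph) and \ref{it:weakseqclosed} (the sets $D_\ell$ are weakly$^*$ closed) are both obtained by passing to the weak$^*$ limit in the integral identity, noting that for $(u_j, S_j) \in D_\ell$ the corresponding $u^0_j$ are bounded in $L^2_\sigma$ and hence possess a weakly$^*$ convergent subsequence. For \ref{it:isometries2}, I would take the spatial translations $\sigma_k(u, S)(x, t) \equiv (u(x - ke, t), S(x - ke, t))$ for a fixed $e \in \R^n \setminus \{0\}$, with the analogous action on $u^0$; these are isometries commuting with $T$, and every element of $L^2_\sigma$ translates weak$^*$ to $0$.

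The crucial step is exhibiting two scaling symmetries with strictly incompatible exponents. The first is the space-time dilation
\[
\tau_\lambda^D(u, S)(x, t) \equiv \bigl(u(x/\lambda, t/\lambda),\, S(x/\lambda, t/\lambda)\bigr), \qquad \tau_\lambda^{Y^*} u^0(x) \equiv u^0(x/\lambda),
\]
under which $\partial_t \tilde u = \lambda^{-1}(\partial_t u) = \lambda^{-1} \tp{div}\, S = \tp{div}\, \tilde S$. A direct norm computation yields the exponents $r_1 = n/2 + 1/p$ (for $u$), $r_2 = n + 2/p$ (for $S$) and $s = n/2$, and hence the ratios
\[
\frac{s}{r_1} = \frac{np}{np + 2}, \qquad \frac{s}{r_2} = \frac{np}{2(np + 2)},
\]
both strictly less than $1$. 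The second is the trivial multiplicative rescaling $\tilde\tau_\lambda(u, S, u^0) \equiv (\lambda u, \lambda S, \lambda u^0)$, which is a symmetry \emph{because the relaxed equation is linear in $(u, S)$}---it would fail for the original quadratic Euler system, where $S$ must scale as $u^2$. It yields $\tilde r_1 = \tilde r_2 = \tilde s = 1$, so the hypothesis $\tilde s/\tilde r_i > s/r_i$ of Theorem \ref{thm:linearproblems} is verified for both $i = 1, 2$.

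Applying Theorem \ref{thm:linearproblems} then yields that $T(M \mathbb{B}_{X^*})$ is nowhere dense in $L^2_\sigma$ for every $M > 0$, which is the assertion of the lemma, interpreted with a simultaneous bound on $\|S\|$. This is more than sufficient for Corollary \ref{thm:Eulertheorem}, since an Euler solution $u \in M \mathbb{B}_{L^p_t L^2_{\sigma, x}}$ automatically satisfies $\|u \otimes u\|_{L^{p/2}_t L^1_x} \le M^2$, so one applies the lemma with $M' = \max(M, M^2)$ to the pair $(u, u \otimes u)$. The step requiring the most care is \ref{it:weakseqclosed}, but the conceptual heart of the argument is that linearising Euler into \eqref{eq:relaxedEuler} introduces the extra multiplicative symmetry $\tilde\tau_\lambda$, whose ratios $(1, 1)$ are incompatible with the ratios produced by the natural space-time dilation---and precisely this incompatibility forces non-surjectivity.
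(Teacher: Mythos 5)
Your proof is correct and follows essentially the same route as the paper's: same choice of $D$, $X^*$, $Y^*$ and $T$, the same space-time dilation (with exponents $n/2+1/p$, $n+2/p$, $n/2$), and the same multiplicative rescaling $\lambda\,\mathrm{id}$ exploiting the linearity of the relaxed system, followed by an appeal to Theorem~\ref{thm:linearproblems}. You spell out the verification of \ref{it:weakcontinuity2}, \ref{it:isometries2} and \ref{it:weakseqclosed} and the passage to Euler via $S = u\otimes u$ in a bit more detail than the paper does, but the substance is identical.
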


\begin{proof}
Denote $D = \{(u,S) \in L^p_t L^2_{\sigma,x} \times L^{p/2}_t \mathbf{M}_x \colon \eqref{eq:relaxedEuler} \text{ holds for some } u^0 \in L^2_\sigma\}$ and define {$T \colon D \to L^2_\sigma$} by $T(u,S) \equiv u^0$. Our intention is to verify the assumptions of Theorem \ref{thm:linearproblems}.

Let $(u,S) \in D$. Given $\lambda > 0$ we set
\begin{equation} \label{eq:scaled u}
u_{\lambda}(x,t) \equiv u \left( \frac{x}{\lambda}, \frac{t}{\lambda} \right), \quad
S_{\lambda}(x,t) \equiv S \left( \frac{x}{\lambda}, \frac{t}{\lambda} \right), \quad
u^0_{\lambda}(x,t) \equiv u^0 \left( \frac{x}{\lambda} \right). 
\end{equation}
Now \eqref{eq:relaxedEuler}--\eqref{eq:scaled u} imply that $(u_\lambda,S_\lambda) \in D$ and $T(u_\lambda,S_\lambda) = u^0_\lambda$. We compute 
\[\Vert u_\lambda \Vert_{L^p_t L^2_x} = \lambda^{\frac{n}{2} + \frac{1}{p}} \Vert u \Vert_{L^p_t L^2_x}, \quad
\Vert S_\lambda \Vert_{L^{p/2}_t \mathbf{M}_x} = \lambda^{n + \frac{2}{p}} \Vert S \Vert_{L^{p/2}_t \mathbf{M}_x}, \quad
\Vert u^0_\lambda \Vert_{L^2} = \lambda^{\frac{n}{2}} \Vert u^0 \Vert_{L^2}.\]
Again we set $\tilde{\tau}_\lambda^D = \lambda \, \tp{id}$ and $\tilde{\tau}_\lambda^{Y^*} = \lambda \, \tp{id}$; Theorem \ref{thm:linearproblems} implies the claim.
\end{proof}

We conclude this subsection by briefly comparing Corollary \ref{thm:Eulertheorem} with the existing literature and we focus on the case $n=2$, where the picture is more complete. Following \cite{DeLellis2010}, we say that an initial datum $u^0$ is \textit{wild} if \eqref{eq:Euler1}--\eqref{eq:Euler3} admits infinitely many admissible weak solutions. Combining the results of \cite{Szekelyhidi2012} with \cite[Theorem 4.2]{Lions1996}, we arrive at the following:

\begin{thm}
When $n=2$, the set of wild initial data is a dense, meagre $F_\sigma$ subset of $L^2_\sigma$.
\end{thm}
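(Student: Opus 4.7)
My plan is to decompose $W$ as a countable union of closed, nowhere-dense sets, drawing density from convex integration and a rigidity statement from Lions. The density of $W$ in $L^2_\sigma$ is the content of the Sz\'ekelyhidi--Wiedemann theorem \cite{Szekelyhidi2012}: within every $L^2$-neighbourhood of a given datum they construct an initial condition admitting infinitely many admissible weak solutions.

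For the meagre $F_\sigma$ structure, I first note that by the $h$-principle underlying \cite{DeLellis2010,Szekelyhidi2012} the existence of two distinct admissible weak solutions with datum $u^0$ already forces the existence of infinitely many, so $W$ coincides with the set of $u^0 \in L^2_\sigma$ admitting two distinct admissible weak solutions in $L^\infty_t L^2_{\sigma,x}$. Next, for each $k \in \mathbb{N}$ I would set
\[
W_k \equiv \Bigl\{ u^0 \in k\mathbb{B}_{L^2_\sigma} : \exists\,\text{admissible } u_1, u_2 \in L^\infty_t L^2_{\sigma,x} \text{ with } u_i|_{t=0} = u^0,\ \Vert u_1 - u_2 \Vert_{L^2((0,k) \times B_k)} \ge 1/k \Bigr\},
\]
so that $W = \bigcup_{k \in \mathbb{N}} W_k$. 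It then suffices to prove that each $W_k$ is closed in $L^2_\sigma$ and has empty interior.

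The emptiness of the interior of each $W_k$ follows from Lions' two-dimensional uniqueness result \cite[Theorem 4.2]{Lions1996}: data with vorticity in $L^p$ for some $p > 1$ are dense in $L^2_\sigma$ and each such datum admits a unique admissible weak solution, hence lies outside $W_k$. A closed set disjoint from a dense set must be nowhere dense.

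The main obstacle is the closedness of $W_k$. Given $u^0_n \to u^0$ in $L^2_\sigma$ with $u^0_n \in W_k$, one picks admissible pairs $(u_1^n, u_2^n)$ realising the defining bound. Admissibility gives $\Vert u_i^n \Vert_{L^\infty_t L^2_x} \le k+1$ for large $n$, so weak$^*$ limits $u_i$ exist along subsequences. The challenge is that the quadratic term $u_i^n \otimes u_i^n$ in the weak formulation of Euler is not stable under weak$^*$ convergence alone. Only in two dimensions does the compensated-compactness and concentration-cancellation machinery of \cite[Theorem 4.2]{Lions1996} upgrade this to strong $L^2_{\mathrm{loc}}$ convergence of a subsequence, allowing one to pass to the limit both in the nonlinearity and in the lower bound $\Vert u_1 - u_2 \Vert_{L^2((0,k) \times B_k)} \ge 1/k$. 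This is the key---and essentially only genuinely two-dimensional---ingredient; the remainder is soft functional-analytic bookkeeping.
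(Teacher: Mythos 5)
Your high-level strategy---decompose $W = \bigcup_k W_k$ and show each $W_k$ is closed with empty interior, drawing density from Sz\'ekelyhidi--Wiedemann---is the natural one, and the density and nowhere-density claims are essentially sound (modulo a minor inaccuracy: uniqueness of the admissible solution is known for $\emph{bounded}$ vorticity in the Yudovich class, not for $L^p$ vorticity with $p>1$, for which uniqueness is open; but bounded-vorticity data is still dense, so that step survives). The real gap is the closedness of $W_k$.

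You correctly identify that one needs strong $L^2_{\mathrm{loc}}$ convergence of the admissible solution pairs $(u_1^n, u_2^n)$, both to pass to the limit in the nonlinear flux $u_i^n \otimes u_i^n$ and to transfer the separation $\Vert u_1^n - u_2^n \Vert_{L^2((0,k)\times B_k)} \ge 1/k$ to the limit (weak lower semicontinuity only bounds $\Vert u_1 - u_2 \Vert$ from \emph{above} by the liminf, which is useless here). But $L^\infty_t L^2_x$ bounds alone do not yield strong $L^2_{\mathrm{loc}}$ convergence in 2D, and the ``concentration-cancellation machinery'' you invoke does not apply at this level of generality: Delort-- and DiPerna--Majda-type results require additional control on the vorticity (a bounded measure, often with a sign condition), which general admissible weak solutions do not possess. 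In fact, the De Lellis--Sz\'ekelyhidi solutions that produce wildness in the first place carry no useful vorticity bound, and sequences of such solutions can converge weak$^*$ to a subsolution that is not a solution. From the equation one only gets $\partial_t u_i^n$ bounded in $L^\infty_t W^{-1,1}_{\mathrm{loc},x}$, and Aubin--Lions then gives strong convergence in $C_t H^{-s}_{\mathrm{loc},x}$, not in $L^2_{t,x,\mathrm{loc}}$, since there is no uniform positive spatial regularity. So as written the closedness of $W_k$ is unjustified. I would also flag your reduction ``two distinct admissible solutions $\Rightarrow$ infinitely many'': the averaged field $(u_1+u_2)/2$ is a subsolution with Reynolds defect $(u_1-u_2)\otimes(u_1-u_2)/4 \ge 0$, but this is not strict on the set where $u_1 = u_2$, so invoking the $h$-principle requires an extra argument. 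Finally, note that \cite[Theorem 4.2]{Lions1996} concerns existence and weak--strong uniqueness of dissipative solutions rather than a compactness theorem for arbitrary $L^\infty_t L^2_x$ weak solutions, so it does not plug the gap in the form you cite it.
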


We also note that some wild initial data admits compactly supported solutions \cite{DeLellis2010}, while Corollary \ref{thm:Eulertheorem} shows that such solutions exist only for a meagre $F_\sigma$ set of initial data.

\subsection{Energy decay rate in the Navier--Stokes equations}
We also illustrate the use of Theorem \ref{thm:linearproblems} in the presence of viscosity; we use the Navier--Stokes equations in $\R^n \times [0,\infty)$, $n \ge 2$, as an example. Given an initial datum $u^0 \in L^2_\sigma$, recall that weak solutions of \eqref{eq:NS1}--\eqref{eq:NS3} were defined in $L^\infty_t L^2_{\sigma,x} \cap L^2_t \dot{H}^1_x$ in \textsection \ref{sec:Examples}. Furthermore, a weak solution is called a \emph{Leray--Hopf solution} if it satisfies the energy inequality
\[\frac{1}{2} \int_{\R^3} |u(x,t)|^2 \d x + \nu \int_s^t \int_{\R^3} |\D u(x,\tau)|^2 \d x \d \tau \le \frac{1}{2} \int_{\R^3} |u(x,s)|^2 \d x \qquad \tp{for all } t > s\]
for a.e.\ $s \in [0,\infty)$, including $s = 0$. \textsc{Leray} showed in his milestone paper \cite{Leray1934} that for every initial datum $u^0 \in L^2_\sigma$ there exists a Leray--Hopf solution {$u \in L^\infty_t L^2_{\sigma,x} \cap L^2_t \dot{H}^1_x$ with $u(\cdot,0) = u^0$}. We briefly recall some of the pertinent results on energy decay of Leray--Hopf solutions and refer to the recent review~\cite{Brandolese2018} for more details and references.

\textsc{Leray} asked in~\cite{Leray1934} whether $\mathcal{E}(t) = \frac 12 \int_{\R^3} |u(x,t)|^2 \d x \to 0$ as $t \to \infty$ for all Leray--Hopf solutions. An affirmative answer was given by theorems of \textsc{Kato} and \textsc{Masuda}, see~\cite[Theorem 2--3]{Brandolese2018}. \textsc{Schonbeck} has shown that there is no uniform energy decay rate for general data $u^0 \in L^2_\sigma$; more precisely, for every $\beta,\e,T > 0$ there exists $u^0 \in \beta \mathbb{B}_{L^2_\sigma}$ such that a Leray--Hopf solution satisfies $\mathcal{E}(T) \ge (1-\e) \mathcal{E}(0)$. Furthermore, whenever $u^0 \in L^2_\sigma \setminus \cup_{1 \le p < 2} L^p$, the energy $\mathcal{E}(t)$ does not undergo polynomial decay. Several precise statements on the decay rate of $\mathcal{E}(t)$ under extra integrability assumptions on $u^0 \in L^2_\sigma$ are given in~\cite{Brandolese2018}.

In Corollary \ref{cor:NSdecay} below, we recover the lack of polynomial decay for a Baire-generic datum. The result applies to all \emph{distributional solutions} of \eqref{eq:NS1}--\eqref{eq:NS3}, which we define as mappings {$u \in L^2_{\tp{loc},t} L^2_{\sigma,x}(\R^3 \times [0,\infty), \R^3)$} such that
\[\int_0^\infty \langle u, \partial_t \varphi \rangle \d t + \int_0^\infty \langle u \otimes u, \D \varphi \rangle \d t + \nu \int_0^\infty \langle u, \Delta \varphi \rangle \d t + \langle u^0, \varphi(0) \rangle = 0\]
for all $\varphi \in C_c^\infty(\R^3 \times [0,\infty),\R^3)$ with $\tp{div} \, \varphi = 0$.

\begin{prop} \label{prop:NSdecay}
Let $p \in (2,\infty)$ and $M > 0$. The set of initial data for which \eqref{eq:NS1}--\eqref{eq:NS3} admits a distributional solution $u \in M \mathbb{B}_{L^p_t L^2_x}$ is nowhere dense in $L^2_\sigma$.
\end{prop}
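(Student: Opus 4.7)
The plan is to adapt the strategy of Lemma \ref{lemma:Eulerlemma} to the viscous setting. The key move is to \emph{linearise} the Navier--Stokes system by replacing the Reynolds stress $u \otimes u$---which, for $u \in L^p_t L^2_x$ with $p>2$, belongs to $L^{p/2}_t L^1_x \hookrightarrow L^{p/2}_t \mathbf{M}_x$---by an unconstrained matrix-valued measure $S$. Concretely, I would consider the linear relaxed equation
\begin{equation*}
\int_0^\infty \bigl(\langle u, \p_t \varphi\rangle + \langle S, \D\varphi\rangle + \nu \langle u, \Delta \varphi\rangle\bigr) \d t + \langle u^0, \varphi(0)\rangle = 0
\end{equation*}
for all test fields $\varphi \in C_c^\infty(\R^3 \times [0,\infty), \R^3)$ with $\tp{div}\, \varphi = 0$, and set $X^* = L^p_t L^2_{\sigma,x} \oplus L^{p/2}_t \mathbf{M}_x$, $Y^* = L^2_\sigma$, $D = \{(u,S) \in X^* : \text{the relaxed equation holds for some } u^0 \in L^2_\sigma\}$ and $T(u,S) \equiv u^0$.

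Next, I would verify the hypotheses of Theorem \ref{thm:linearproblems}. Linearity of the relaxed equation, combined with weak$^*$ sequential compactness of bounded sets in $X^*$, immediately gives both the weak$^*$-to-weak$^*$ closed graph of $T$ and the weak$^*$ sequential closure of each $D_\ell$, and spatial translations $\sigma_k f(x) = f(x-k e)$ for any fixed $e \neq 0$ supply the isometries required by condition \ref{it:isometries2}. As the two scalings I would use the standard parabolic Navier--Stokes scaling
\[u_\beta(x,t) \equiv \tfrac{1}{\beta} u\bigl(\tfrac{x}{\beta}, \tfrac{t}{\beta^2}\bigr), \quad S_\beta(x,t) \equiv \tfrac{1}{\beta^2} S\bigl(\tfrac{x}{\beta}, \tfrac{t}{\beta^2}\bigr), \quad u^0_\beta(x) \equiv \tfrac{1}{\beta} u^0\bigl(\tfrac{x}{\beta}\bigr),\]
under which the viscous term and the time derivative remain balanced, together with the trivial multiplicative scaling $\tilde{\tau}_\lambda(u,S,u^0) = (\lambda u, \lambda S, \lambda u^0)$, which is preserved by linearity. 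A direct computation then yields exponents $r_u = \tfrac{1}{2} + \tfrac{2}{p}$, $r_S = 1 + \tfrac{4}{p}$, $s = \tfrac{1}{2}$ for the parabolic scaling and $\tilde{r}_u = \tilde{r}_S = \tilde{s} = 1$ for the multiplicative one, so that $s/r_u = p/(p+4) < 1 = \tilde{s}/\tilde{r}_u$ and $s/r_S = p/(2(p+4)) < 1 = \tilde{s}/\tilde{r}_S$; the scalings are strictly incompatible.

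Theorem \ref{thm:linearproblems} then implies that $T(M' \mathbb{B}_{X^*})$ is nowhere dense in $L^2_\sigma$ for every $M' > 0$. To conclude Proposition \ref{prop:NSdecay}, it suffices to observe that any distributional solution $u \in M \mathbb{B}_{L^p_t L^2_x}$ of the original Cauchy problem \eqref{eq:NS1}--\eqref{eq:NS3} satisfies the relaxed equation with $S \equiv u \otimes u$ and $\Vert S \Vert_{L^{p/2}_t L^1_x} \le \Vert u \Vert_{L^p_t L^2_x}^2 \le M^2$; taking $M' = \max(M, M^2)$, the set of initial data admitting such a solution is contained in $T(M' \mathbb{B}_{X^*})$ and is therefore nowhere dense. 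The conceptual step I expect to matter most is the relaxation itself, which is what circumvents the failure of weak$^*$ sequential closedness that would otherwise obstruct a direct application of the open mapping machinery to the nonlinear Navier--Stokes solution operator; the exponent bookkeeping for the parabolic scaling applied to the viscous term, while requiring care, is routine.
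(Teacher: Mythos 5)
Your proof is correct, and it takes a genuinely different route from the paper's. The paper relaxes \emph{both} nonlinear terms: besides replacing $u\otimes u$ by $S^1\in L^{p/2}_t\mathbf{M}_x$, it also replaces $\D u$ (the source of the viscous term) by an unconstrained field $S^2\in L^p_t\dot H^{-1}_x$. This lets the paper use the \emph{non-parabolic} scaling $u_\lambda(x,t)=u(x/\lambda,t/\lambda)$, $S^i_\lambda(x,t)=S^i(x/\lambda,t/\lambda)$, $u^0_\lambda(x)=u^0(x/\lambda)$, which would \emph{not} leave the equation invariant if the Laplacian were kept (the time derivative and the Laplacian scale by $\lambda^{-1}$ and $\lambda^{-2}$ respectively). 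You instead keep the viscous term $\nu\langle u,\Delta\varphi\rangle$, relax only the quadratic term, and use the natural parabolic Navier--Stokes scaling, for which the invariance check is immediate and the relaxed equation remains linear in $(u,S)$, so weak$^*$ closedness still goes through. Your exponents $r_u=\tfrac12+\tfrac2p$, $r_S=1+\tfrac4p$, $s=\tfrac12$ (in dimension $3$) against $\tilde r_i=\tilde s=1$ are correct and strictly incompatible, so Theorem \ref{thm:linearproblems} applies. What each approach buys: yours is slightly leaner, needing one fewer relaxed field. The paper's is more robust: under your parabolic scaling in $\R^n$ one has $s=\tfrac n2-1$, which degenerates to $s=0$ when $n=2$ (the critical case where $L^2_\sigma$ is scaling-invariant), so Theorem \ref{thm:linearproblems}, which requires $s>0$, would not apply; the paper's non-parabolic scaling gives $s=\tfrac n2>0$ for all $n\ge2$. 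For the proposition as stated in $\R^3$ both arguments are valid.
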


\begin{proof}
We consider the relaxed problem where we require $u \in L^p_t L^2_{\sigma,x}$, $S^1 \in L^{p/2}_t \mathbf{M}_x$ and {$S^2 \in L^p_t \dot{H}^{-1}_x$} to satisfy
\begin{equation} \label{eq:relaxedNS}
\int_0^\infty \langle u, \partial_t \varphi \rangle \d t + \int_0^\infty \langle S^1, \D \varphi \rangle \d t + \nu \int_0^\infty \langle S^2, \D \varphi \rangle \d t + \langle u^0, \varphi(0) \rangle = 0
\end{equation}
for all $\varphi \in C_c^\infty([0,\infty),\R^3)$ with $\tp{div} \, \varphi = 0$. As before, denote by 
$$D \subset L^p_t L^2_{\sigma,x} \oplus L^{p/2}_t \mathbf{M}_x \oplus L^p_t \dot{H}^{-1}_x \equiv X^*$$ the set of triples $(u,S^1,S^2)$ such that \eqref{eq:relaxedNS} holds. One sets
\[u_\lambda(x,t) = u \left( \frac{x}{\lambda}, \frac{t}{\lambda} \right), \qquad S^i(x,t) = S^i \left( \frac{x}{\lambda}, \frac{t}{\lambda} \right);\]
note that
\begin{align*}
& \Vert u_\lambda \Vert_{L^p_t L^2_{x}} = \lambda^{n/2+1/p} \Vert u\Vert_{L^p_t L^2_x},  
& \Vert S^1_\lambda \Vert_{L^{p/2}_t \mathbf{M}_x} = \lambda^{n+2/p} \Vert S^1 \Vert_{L^{p/2}_t \mathbf{M}_x}, \\
& \Vert u^0_\lambda \Vert_{L^2} = \lambda^{n/2} \Vert u \Vert_{L^2},  & \Vert S^2_\lambda \Vert_{L^p_t \dot{H}^{-1}_x} = \lambda^{n/2+1+2/p} \Vert S^2 \Vert_{L^p_t \dot{H}^{-1}_x}.
\end{align*}
As before, we set $\tilde{\tau}_\lambda^D =  \lambda \, \tp{id}$ and $\tilde{\tau}_\lambda^{Y^*} = \lambda \, \tp{id}$. The claim now follows from Theorem \ref{thm:linearproblems}.
\end{proof}

\begin{cor} \label{cor:NSdecay}
Let $\e,C > 0$. Consider the set $X_{C,\e}$ of initial data $u^0 \in L^2_\sigma$ such that a distributional solution of \eqref{eq:NS1}--\eqref{eq:NS3} satisfies
\[(1+t)^\e \mathcal{E}(t) \le C \qquad \text{for almost every } t \in [0,\infty).\]
The set $X_{C,\e}$ is nowhere dense in $L^2_\sigma$.
In particular, for a Baire-generic $u^0 \in L^2_\sigma$, distributional solutions satisfy
\[\lim_{\tau \to \infty} \Vert t^\e \mathcal{E} \Vert_{L^\infty(\tau,\infty)} = \infty \quad \text{for every } \e > 0.\]
\end{cor}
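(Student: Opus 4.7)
The plan is to derive Corollary \ref{cor:NSdecay} from Proposition \ref{prop:NSdecay} by observing that any pointwise polynomial decay of $\mathcal{E}$ upgrades to $L^p_tL^2_x$-integrability of the velocity for a suitable $p$. Specifically, if $u^0\in X_{C,\e}$ with associated distributional solution $u$, then $\mathcal{E}(t)=\tfrac12\|u(\cdot,t)\|_{L^2}^2\le C(1+t)^{-\e}$ a.e., so $\|u(\cdot,t)\|_{L^2}\le\sqrt{2C}\,(1+t)^{-\e/2}$ a.e. Choose any $p>\max\{2,2/\e\}$, so that $p\e/2>1$; then
\[
\|u\|_{L^p_tL^2_x}^p\le(2C)^{p/2}\int_0^\infty(1+t)^{-p\e/2}\,dt=\frac{(2C)^{p/2}}{p\e/2-1}=:M^p<\infty,
\]
so $u\in M\mathbb{B}_{L^p_tL^2_x}$ with $M=M(C,\e,p)$. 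Proposition \ref{prop:NSdecay} then implies that $X_{C,\e}$, being contained in the nowhere-dense set of initial data that admit such a solution, is itself nowhere dense.

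For the Baire-genericity statement, set $X:=\bigcup_{n\in\N,\,\e\in\Q_+}X_{n,\e}$, which is meagre as a countable union of nowhere dense sets, so its complement $R:=L^2_\sigma\setminus X$ is residual. For $u^0\in R$, no distributional solution $u$ satisfies $(1+t)^\e\mathcal{E}(t)\le n$ a.e.\ on $[0,\infty)$ for any rational (hence, by monotonicity in both parameters, any) $n,\e>0$. To extract the limit statement, note that $\tau\mapsto\|t^\e\mathcal{E}\|_{L^\infty(\tau,\infty)}$ is monotone non-increasing, so its limit as $\tau\to\infty$ exists in $[0,\infty]$; were it some $L<\infty$, then $t^\e\mathcal{E}(t)\le L+1$ a.e.\ on $(\tau_0,\infty)$ for some $\tau_0>0$, and the time-shifted solution $v(x,s):=u(x,s+\tau_0)$ would be a distributional solution of \eqref{eq:NS1}--\eqref{eq:NS3} whose energy satisfies $(1+s)^\e\mathcal{E}_v(s)\le C'(L,\tau_0,\e)$ a.e.\ on $[0,\infty)$; equivalently, the shifted datum $v^0=u(\cdot,\tau_0)$ lies in $X_{C',\e}$.

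The main obstacle is this final transfer step: concluding from $u(\cdot,\tau_0)\in X_{C',\e}$ that $u^0$ itself lies in an appropriate meagre set, since the map $u^0\mapsto u(\cdot,\tau_0)$ is neither single-valued nor continuous for general distributional solutions. The cleanest workaround is to strengthen Proposition \ref{prop:NSdecay} by proving directly that, for any fixed $T\ge 0$, $p>2$, and $M>0$, the set of $u^0\in L^2_\sigma$ admitting a distributional solution $u$ with $\|u\|_{L^p((T,\infty);L^2_x)}\le M$ is already nowhere dense. This can be established by adapting the relaxed formulation \eqref{eq:relaxedNS} with norms restricted to the time interval $(T,\infty)$ and re-examining the scaling-incompatibility argument of Theorem \ref{thm:linearproblems}; the linearity of the relaxed equation ensures that time translation and weak$^*$ convergence behave well on the corresponding solution spaces. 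With this strengthened statement in hand, the limit conclusion follows by applying it at each $\tau_0\in\Q_+$, taking a countable intersection, and arguing on the residual complement directly.
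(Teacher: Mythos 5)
The first claim---the nowhere-density of $X_{C,\e}$---is handled correctly: the pointwise decay $\mathcal{E}(t)\le C(1+t)^{-\e}$ puts $u$ into $M\mathbb{B}_{L^p_t L^2_x}$ with an explicit $M=M(C,\e,p)$ for any $p>\max\{2,2/\e\}$, so $X_{C,\e}$ lies inside the nowhere dense set of Proposition \ref{prop:NSdecay}, which is exactly the intended deduction.

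For the ``in particular'' part you have correctly identified the subtlety: a tail bound $t^\e\mathcal{E}(t)\le L+1$ on $(\tau_0,\infty)$ does not by itself place $u^0$ in any $X_{n,\e'}$, since the definition of $X_{n,\e'}$ requires a bound on all of $[0,\infty)$ while a general distributional solution only gives $\mathcal{E}\in L^1_{\textup{loc}}$. However, the workaround you propose does not go through with the tools of the paper. Theorem \ref{thm:linearproblems}, on which Lemma \ref{lemma:Eulerlemma} and Proposition \ref{prop:NSdecay} rest, requires via assumption \ref{it:taus} of Theorem \ref{thm:abstractopenmappingprinciple2} that the dilations $\tau_\lambda^D$ act as multiples of isometries on each component space; this fails for $L^p((T,\infty);L^2_x)$ with $T>0$, since for $u_\lambda(x,t)=u(x/\lambda,t/\lambda)$ one computes $\Vert u_\lambda\Vert^p_{L^p((T,\infty);L^2_x)}=\lambda^{np/2+1}\Vert u\Vert^p_{L^p((T/\lambda,\infty);L^2_x)}$, i.e.\ the time interval moves with $\lambda$. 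Moreover, the operator $(u,S)\mapsto u^0$ in \eqref{eq:relaxedNS} is read off from test functions supported near $t=0$ and is therefore not determined by the restriction of $(u,S)$ to $(T,\infty)$, so the proposed domain of definition would still need to carry unnormed information on $[0,T]$. The cleaner resolution is to read the corollary, as is natural, for distributional solutions with locally bounded energy, say $u\in L^\infty_{\textup{loc},t}L^2_x$; this is the class in which $X_{C,\e}$ already lives (the bound $(1+t)^\e\mathcal{E}\le C$ forces $\mathcal{E}\le C$), it contains all Leray--Hopf solutions, and it matches the explicit intersection with $L^\infty_tL^2_x$ appearing in Corollary \ref{thm:Eulertheorem}. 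For such a solution, with $\Vert\mathcal{E}\Vert_{L^\infty(0,\tau_0)}\le M$ and $t^\e\mathcal{E}(t)\le L+1$ on $(\tau_0,\infty)$ for some $\tau_0\ge 1$, one obtains $(1+t)^{\e'}\mathcal{E}(t)\le\max\{M(1+\tau_0)^{\e'},\,2^{\e'}(L+1)\}$ a.e.\ for any $0<\e'\le\e$, hence $u^0\in X_{n,\e'}$ for a rational $\e'$ and integer $n$, contradicting $u^0\in R$; no strengthening of Proposition \ref{prop:NSdecay} is needed.
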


\section{Concluding discussion} \label{sec:Concluding discussion}
In this section, we discuss the advantages of the nonlinear open mapping principles proved in this paper, when compared to the classical Banach--Schauder theorem. We also point out some of the limitations of our results, as well as directions for future work.

We begin by recalling the standard proof of the Banach--Schauder open mapping theorem. If a bounded \textit{linear} map $L \colon X \to Y$ between Banach spaces is surjective, then the Baire category theorem yields a constant $C > 0$ and a ball $B_r(f_0) \subset Y$ such that $L(C \mathbb{B}_X) \supseteq B_r(f_0)$, and the proof is completed as follows. First, by linearity, $L(C\mathbb{B}_X) = L(-C \mathbb{B}_X) \supseteq - B_r(f_0)$, so that, by linearity again, 
$$L(2 C \mathbb{B}_X) = L(C \mathbb{B}_X) - L(C \mathbb{B}_X) \supset B_r(f_0) - B_r(f_0) = 2 B_r(0).$$
We notice that this proof uses in a fundamental way three properties: 
\begin{enumerate}
\item\label{it:linearity} the linearity of the operator $L$;
\item\label{it:vectorspace} the vector space structure of the domain of definition of $L$;
\item\label{it:symrange} the symmetry of the range of $L$.
\end{enumerate}

Concerning \ref{it:linearity}, we note that if one attempts to generalise the above proof to nonlinear operators, then surjectivity only leads to ``$1/2$-openness'' and, more generally, $1/n$-surjectivity leads to $1/2n$-openness. To our knowledge, Theorem \ref{thm:abstractopenmappingprinciple} and Proposition \ref{prop:trichotomy} give the first abstract results on \textsc{Rudin}'s problem which yield $1/n$-openness from $1/n$-surjectivity.

With respect to \ref{it:vectorspace}, another key novelty of our work is that the domain of definition $D$ of the operator $T$ need not be a vector space. This is crucial when applying open mapping theorems to typical Cauchy problems in nonlinear evolutionary PDEs as is done in \textsection \ref{sec:A general nonlinear open mapping principle for scale-invariant problems}--\ref{sec:Non-surjectivity under incompatible scalings}.

Finally, we note that \ref{it:symrange} is not needed for our results either. In fact, Theorems \ref{thm:abstractopenmappingprinciple} and \ref{thm:abstractopenmappingprinciple2} apply when the target space is a closed convex cone such as $\{f \in L^p(\R^n) \colon f \ge 0 \tp{ a.e.}\}$, for $1 < p < \infty$, c.f.\ Remark \ref{remark:cones}, and also when the symmetry of the range is non-trivial to check, as is the case for the Hessian operator $\tp{H} \colon \dot{W}^{1,2}(\R^2) \to \mathscr{H}^1(\R^2)$.

We now discuss some of the limitations of our work. From a PDE perspective, the main weak point of Theorem \ref{thm:abstractopenmappingprinciple2}  is that assumption \ref{it:isometries2} seems difficult to adapt to function spaces defined over the flat torus $\T^n$ or bounded domains. For instance, on $\T^n$, translations $\sigma_k^{Y^*} f(x) = f(x-ke)$ typically fail the condition $\sigma_k^{Y^*} f \overset{*}{\rightharpoonup} 0$. On $\R^n$, translations can often be replaced by scalings such as $\sigma_k^{X^*} f(x) = k^\alpha f(k x)$, but such operators are of course not automorphisms on function spaces over the torus or bounded domains.

Despite the fact that Theorem \ref{thm:abstractopenmappingprinciple2} applies to many different equations, it would be interesting to look for generalisations, in order to account for other physical PDEs. 
One such situation concerns  function spaces with \textit{critical scalings}, i.e.\ 
$$\Vert \tau_\lambda^D u \Vert_{X^*} = \Vert u \Vert_{X^*}\qquad \tp{ and } \qquad \Vert \tau_\lambda^{Y^*} f \Vert_{Y^*} = \Vert f \Vert_{Y^*}.$$ Note, however, that even if one does not assume that $T$ is positively homogeneous, as in Theorem \ref{thm:abstractopenmappingprinciple}, the proof of this theorem still provides $\delta,M_\delta > 0$ such that $T(M_\delta \mathbb{B}_{X^*}) \supseteq \delta \mathbb{B}_{Y^*}$. It thus seems natural to ask whether one can achieve openness at the origin, i.e., whether one gets $\lim_{\delta \searrow 0} M_\delta = 0$.
Another interesting problem is to decide whether the weak$^*$-to-weak$^*$ closed graph assumption on the operators is an artifact of our proofs or a fundamental requirement for the validity of a nonlinear open mapping principle. 
We hope to address these questions in future work.

\let\oldthebibliography\thebibliography
\let\endoldthebibliography\endthebibliography
\renewenvironment{thebibliography}[1]{
  \begin{oldthebibliography}{#1}
    \setlength{\itemsep}{1.5pt}
    \setlength{\parskip}{1.5pt}
}
{
  \end{oldthebibliography}
}

{\small
\bibliographystyle{acm}
\bibliography{/Users/antonialopes/Dropbox/Oxford/Bibtex/library.bib}
}

\end{document}